\tikzstyle{block} = [rectangle, draw, rounded corners, text centered, minimum height = 2em, minimum width = 7em, align=left, scale = 0.75]
\tikzstyle{line} = [draw, thick]
\theoremstyle{plain}
\newtheorem{theorem}{Theorem}[section]
\newtheorem{corollary}[theorem]{Corollary}
\newtheorem{prop}[theorem]{Proposition}
\newtheorem{lemma}[theorem]{Lemma}
\newtheorem{remark}[theorem]{Remark}
\theoremstyle{definition}
\newtheorem{definition}[theorem]{Definition}
\newtheorem{example}[theorem]{Example} 
\theoremstyle{plain}
\newcommand{\N}{\mathbb{N}}
\newcommand{\p}{\mathbb{P}}
\newcommand{\E}{\mathbb{E}}
\newcommand{\expt}{\mathbb{E}}
\newcommand{\indic}{\mathbf{1}}
\newcommand{\floor}[1]{{\left\lfloor #1 \right\rfloor}}
\newcommand{\sset}{\subset}
\newcommand{\al}{\alpha}
\newcommand{\Om}{\Omega}
\newcommand{\mathforall}{\text{ for all }}
\newcommand{\mathand}{\;\text{and}\;}
\newcommand{\mathas}{\;\text{as}\;}
\newcommand{\ga}{\gamma}
\newcommand{\Ga}{\Gamma}
\newcommand{\ep}{\epsilon}
\newcommand{\de}{\delta}
\newcommand{\sig}{\sigma}
\DeclareMathOperator*{\Arg}{Arg}
\newcommand{\del}{\partial}
\newcommand{\scrA}{\mathcal{L}}
\newcommand{\scrE}{\mathcal{E}}
\newcommand{\scrC}{\mathcal{C}}
\newcommand{\scrL}{\mathcal{L}}
\newcommand{\scrF}{\mathcal{F}}
\newcommand{\close}[1]{\mkern 1.5mu\overline{\mkern-1.5mu#1\mkern-1.5mu}\mkern 1.5mu}
\newcommand{\Z}{\mathbb{Z}}
\newcommand{\C}{\mathbb{C}}
\newcommand{\R}{\mathbb{R}}
\newcommand{\eqd}{\stackrel{d}{=}}
\newcommand{\cvgd}{\stackrel{d}{\to}}
\newcommand{\X}{\times}
\newcommand{\smin}{\setminus}
\newcommand{\lf}{\left}
\newcommand{\rg}{\right}
\newcommand{\bs}{\mathbf{s}}
\newcommand{\be}{\beta}
\author{Duncan Dauvergne \and Mihai Nica \and B\'alint Vir\'ag}
\title{Uniform convergence to the Airy line ensemble}
\begin{document}
\maketitle
\begin{figure}[h]
\vspace{-3.5em}
\begin{center}
\includegraphics[width=0.8\textwidth]{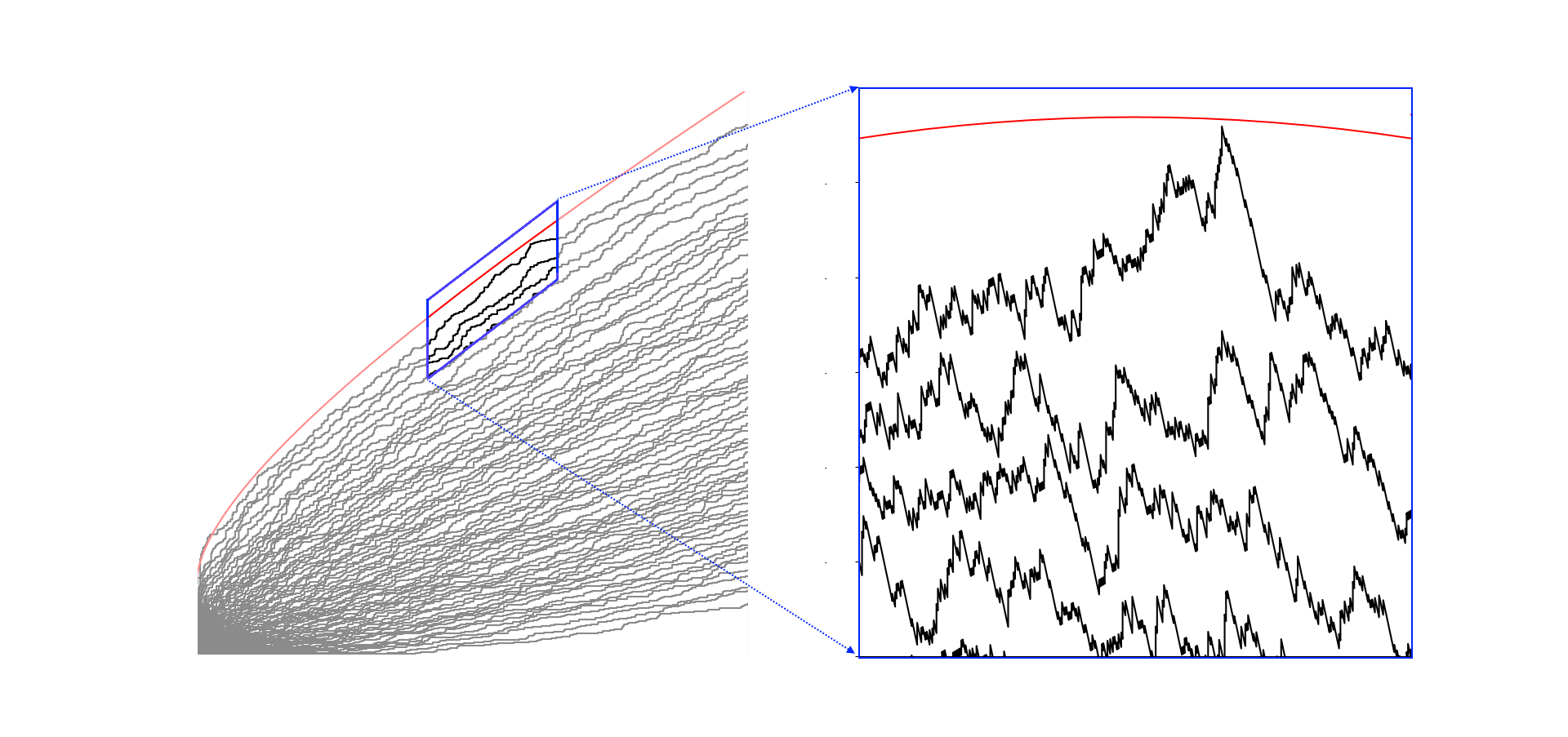}
\end{center}
\vspace{-2em}
\end{figure}

\begin{abstract} We show that classical integrable models of last passage percolation and the related nonintersecting random walks converge uniformly on compact sets to the Airy line ensemble. Our core approach is to show convergence of nonintersecting Bernoulli random walks in all feasible directions in the parameter space. We then use coupling arguments to extend convergence to other models.
\end{abstract}

\section{Introduction}
The Airy line ensemble is a random sequence of continuous functions that arises as a scaling limit in random matrix theory and  other models within the KPZ universality class. In the last passage percolation setting it was constructed by \cite{prahofer2002scale} as a scaling limit of the polynuclear growth model, see also \cite{macedo1994universal} and \cite{forrester1999correlations}. \cite{prahofer2002scale} showed that the finite dimensional distributions of an appropriately centered and rescaled version of the multi-layer polynuclear growth model converge to those of the Airy line ensemble.

\cite{CH} showed that appropriate statistics in Brownian last passage percolation converge to the Airy line ensemble in the topology of uniform convergence of functions on compact sets. This stronger notion of convergence allowed them to prove new and interesting qualitative properties of the Airy line ensemble.

Recently, \cite{DOV} constructed the Airy sheet, the two-parameter scaling limit of Brownian last passage percolation, in terms of the Airy line ensemble. The Airy sheet was used to build the full scaling limit of Brownian last passage percolation, the directed landscape. For these results, uniform convergence to the Airy line ensemble (rather than just convergence of finite dimensional distributions) is a crucial input. In fact, this convergence is the \emph{only} input necessary for an i.i.d.\ last passage model to also converge to both the Airy sheet and the directed landscape. We prove this in the forthcoming work \cite{DV2}.

With this motivation in mind, we devote this paper to proving uniform convergence to the Airy line ensemble for various classical models. In this setting, there is a large literature on convergence of finite-dimensional distributions. The contribution of this paper is a unified approach which applies in all feasible directions of the parameter space and a general argument giving uniform convergence for these models.

\subsection*{Main results and an overview of the proofs}

Consider an infinite array $W =  (W_{i, j})_{i, j \in \N}$ of nonnegative real numbers. For a point $(m, n) \in \N \X \N$, the last passage value $L_n(m)$ in the array $W$ is the maximum weight of an up-right path (the sum of the entries along that path) from the corner $(1, 1)$ to the point $(m, n)$. Last passage percolation can also be done with several disjoint paths. The $k$-path last passage value $L_{n,k}(m)$ is the maximum sum of weights of $k$ disjoint up-right paths with start and end points $(1, i)$ and $(m, n - k + i)$ for $i = 1, \dots, k$. See Figure \ref{fig:LPP_stab} for an illustration and Definition \ref{D:LPP-discrete} for a more precise description.

If we set $L_{n,0} \equiv 0$, the increments $L_{n,k+1}(m) - L_{n,k}(m)$ are nonincreasing in $k$ for any point $(m,n) \in \N^2$. This can be proven directly by manipulating collections of disjoint paths, or alternately is a consequence of Greene's theorem for the RSK correspondence which relates these increments to the row lengths in a Young diagram, see \cite{sagan2013symmetric}. Allowing $m$ to vary, we thus obtain an ordered sequence of functions. When the array $W$ is filled with i.i.d. geometric random variables this sequence has a well-known integrable structure, which makes the model amenable to analysis. Our first theorem is a general convergence result for these functions.

\begin{figure}[!ht]
     \subfloat[$\beta = 1,n=5$]{%
       \includegraphics[width=0.32\textwidth]{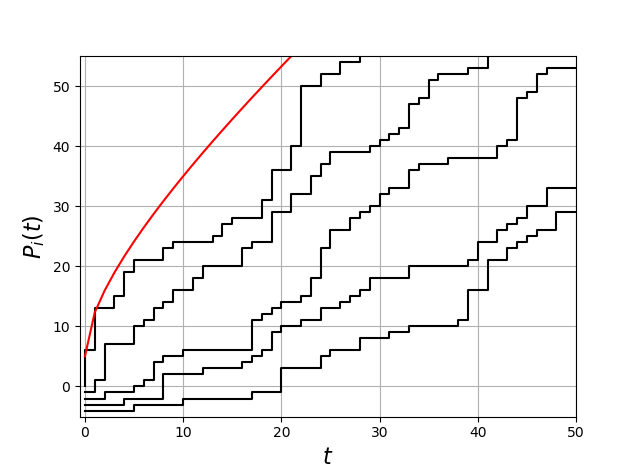}
     }
     \hfill
     \subfloat[$\beta = 2, n=5$]{%
       \includegraphics[width=0.32\textwidth]{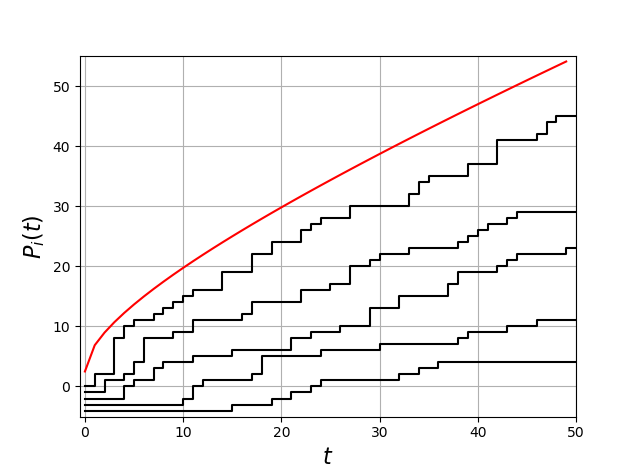}
     }
     \hfill
     \subfloat[$\beta = 1, n=25$]{%
       \includegraphics[width=0.32\textwidth]{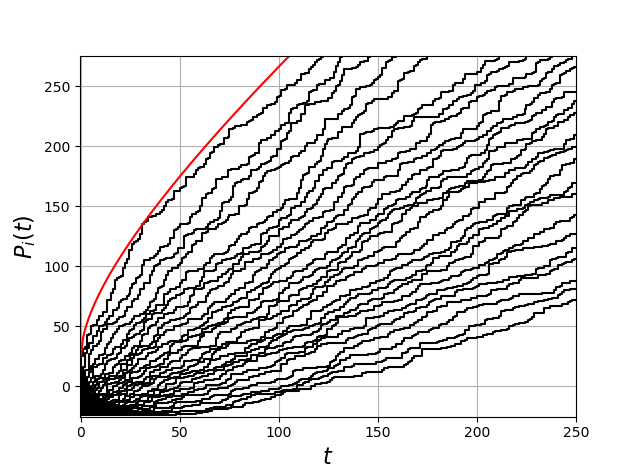}
     }
     \caption{Realizations of differences in last passage percolation in an environment of i.i.d. geometric random variables: $P_k(t) := L_{n,{k+1}}(t) - L_{n,{k}}(t) - k + 1$. These walks are identical in distribution to $n$ random walks whose increments are geometric random variables of mean $\beta^{-1}$ that are conditioned not to intersect for all time, see Section \ref{S:LPP-geometric}. The arctic curve is displayed in red.  Theorem \ref{T:main-lpp} describes the fluctuation limit.}
     \label{fig:NI_geo}
   \end{figure}

\begin{theorem}
\label{T:main-lpp}
Consider a sequence of  last passage percolation models, indexed by $n \in \mathbb{N}$, with independent geometric random variables of mean $\beta_n^{-1} \in (0, \infty)$. Let $m_n$ be a sequence of positive integers: we will analyze last passage values (defined precisely in  \eqref{D:LPP-k}) from the bottom-left corner $(1,1)$ to points near $(m_n,n)$ in these environments. For each $n,m\in \mathbb{N}$ and $\beta \in (0,\infty)$, define the arctic curve:
\begin{equation}\label{E:arctic1}
g_{n,\beta}(m) = (m + n)\beta^{-1} + 2 \sqrt{mn \beta^{-1}(1+\beta^{-1})},
\end{equation}
which is the deterministic approximation of the last passage value $L_{n, 1}(m)$.
We now define the temporal and spatial scaling parameters $\tau_n$ and $\chi_n$ in terms of the value of the arctic curve $g=g_{n,\beta_n}$ and its derivatives $g',g''$ evaluated at $m_n$:
\begin{equation}
\label{E:tau-xi-geom}
\tau_n^3 = \frac{2 g '(1 + g')}{g''^2}, \qquad \chi_n^3 = \frac{ [g'(1+g')]^{2}}{ -2g''}.
\end{equation}
Also, let $h_n$ be the linear approximation of the arctic curve $g$ at $m_n$:
$$
h_n(m) = g + (m - m_n)g'
$$
Then the following statements are equivalent:
\begin{enumerate}[label=(\roman*)]
	\item The dimensions of the last passage grid and the mean total sum of the weights in the grid converge to $\infty$:
	\begin{equation*}
	n\to\infty, \qquad m_n\to\infty, \qquad \frac{nm_n}{\beta_n}\to\infty.
	\end{equation*}
	\item The rescaled differences of the $k$-path and $(k-1)$-path last passage values converge in distribution\footnote{The figure above the abstract of the paper illustrates the rescaling and convergence graphically}, uniformly over compact sets of $\mathbb{N}\times \mathbb{R}$, to the parabolic Airy line ensemble $\scrA$ (see Definition \ref{D:airy-line} for a precise definition of the parabolic Airy line ensemble).
	$$\frac{(L_{n,k}-L_{n,k-1}-h_n)(m_n+\lfloor \tau_{n}t\rfloor)}{\chi_n}\Rightarrow \scrA_k(t).
	$$
\end{enumerate}

\end{theorem}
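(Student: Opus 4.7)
The equivalence splits into the substantive direction (i) $\Rightarrow$ (ii) and the easier converse. For (i) $\Rightarrow$ (ii), I begin by recalling that the differences $L_{n,k}(m) - L_{n,k-1}(m) - (k-1)$ are, jointly in $k$, nonintersecting random walks with i.i.d.\ geometric increments of mean $\beta_n^{-1}$ (Section \ref{S:LPP-geometric}). The theorem therefore reduces to uniform-on-compacts convergence of the top curves of this geometric ensemble to $\scrA$, after recentering by $h_n$ and rescaling horizontally by $\tau_n$ and vertically by $\chi_n$.

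My strategy is to reduce this geometric ensemble to a nonintersecting Bernoulli ensemble, to which the core theorem of the paper (proved in earlier sections, and announced in the abstract) applies. The coupling writes each geometric$(\beta_n^{-1})$ increment as the number of successes before the next failure in an i.i.d.\ Bernoulli$(q_n)$ sequence with $q_n = 1/(1+\beta_n)$. Applied row-by-row on the lattice, this embeds the geometric nonintersecting walks into a Bernoulli nonintersecting ensemble whose time length is a sum of geometrics with mean $m_n(1+\beta_n^{-1})$. Under (i) this length concentrates, and the embedding identifies the $k$-th geometric walk, up to negligible corrections, with a deterministic time-change of the $k$-th Bernoulli walk.

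The key calculation is to check that the Bernoulli scaling parameters, defined analogously to $\tau_n, \chi_n$ via the Bernoulli arctic curve and its derivatives, push forward through the coupling to match \eqref{E:tau-xi-geom} exactly. This is a chain-rule computation: if $G = g_{n,\beta_n}$ is the geometric arctic curve and $B$ the Bernoulli one, then $B(M) \approx G(m)$ under the change of variables $M = m(1+\beta_n^{-1})$, and differentiating produces the factors $g'(1+g')$ and $g''$ appearing in \eqref{E:tau-xi-geom}. The condition (i) translates into the feasibility hypothesis of the Bernoulli theorem in the corresponding parameter direction. The main obstacle is ensuring that this translation stays inside the regime where the Bernoulli theorem is uniformly applicable, particularly when $\beta_n\to 0$ or $\beta_n\to\infty$, where the induced Bernoulli direction approaches the boundary of its parameter space; this is precisely why the paper proves Bernoulli convergence uniformly in all feasible directions rather than at a fixed point.

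For (ii) $\Rightarrow$ (i), each of the three conditions in (i) is necessary, and I argue by contradiction along subsequences. If $n$ is bounded, only finitely many nonintersecting walks exist, so $\scrA_k$ for large $k$ cannot appear in the limit. If $m_n$ is bounded, each $L_{n,k}(m_n + \lfloor \tau_n t \rfloor)$ is a piecewise-constant function of finitely many weights and cannot converge to the continuous random function $\scrA_k$ on all of $\R$. Finally, if $nm_n/\beta_n$ is bounded, the expected total weight of the grid is $O(1)$, so $L_{n,k}$ is stochastically bounded while any sensible rescaling by $\chi_n$ of a fluctuation limit equal to $\scrA$ would force $\chi_n$ to diverge, yielding a degenerate limit instead of $\scrA$.
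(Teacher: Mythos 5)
Your reduction of Theorem \ref{T:main-lpp} to the Bernoulli theorem via the geometric-to-Bernoulli transformation is the same route the paper takes, so the overall architecture (RSK to pass from last passage differences to nonintersecting geometric walks, then a transformation to nonintersecting Bernoulli walks, then apply Theorem \ref{T:main-walk}) is correct. Two implementation points are worth flagging.

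First, your coupling (each geometric increment as the number of Bernoulli successes before the next failure, applied row-by-row) is the right single-walk picture, but the increments of nonintersecting geometric walks are \emph{not} i.i.d.\ geometrics --- the ensemble is a Doob transform --- so you cannot literally couple increment-by-increment. What works, and is what the paper does, is to realize this correspondence \emph{deterministically at the level of paths} as the shear $(x,y)\mapsto(x+y,x)$ of zigzag graphs; one then needs the nontrivial distributional fact (Lemma \ref{L:geom-bern}, citing equation 4.78 of \cite{konig2002non}) that this bijection carries the nonintersecting geometric ensemble exactly to the nonintersecting Bernoulli ensemble. Your write-up asserts the embedding preserves the nonintersecting structure without supplying this, and it is the crux of the reduction.

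Second, the random time-length issue you handle via concentration is handled more cleanly in the paper by phrasing convergence in a local-Hausdorff topology on graphs and showing that the composite affine scaling $M_n^{-1}B^{-1}L_n$ (geometric scaling, shear, inverse Bernoulli scaling) converges to the identity. The only surviving off-diagonal entry is $-\bar\chi_n/(\bar\tau_n\gamma')$, which vanishes precisely because of the scaling relation $\gamma'(1-\gamma')\tau_n = 2\chi_n^2$ and $\bar\chi_n\to\infty$; this is the rigorous version of your ``negligible corrections'' claim and it avoids a separate concentration estimate for the random path length. Your chain-rule computation of $g'(1+g')$ and $g''$ from the Bernoulli arctic curve under the change of variables $M=m(1+\beta^{-1})$ is exactly the check that the arctic curves correspond under the shear, consistent with the paper's $\bar m_n = m_n + g(m_n)$ and $\bar\gamma(\bar m_n)=m_n$.

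For (ii) $\Rightarrow$ (i), your three-case subsequence argument is fine and is a slightly more elaborate version of the paper's one-line observation that convergence to a nondegenerate continuous limit forces all three quantities to diverge.
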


In the statement of Theorem \ref{T:main-lpp}, we have specified that we prove convergence to the parabolic Airy line ensemble $\scrA$, which is related to the usual Airy line ensemble $\mathcal A$ by the addition of a parabola: $\mathcal A(t) = \mathcal L(t) + t^2$. The process $\mathcal A$ is stationary, and hence $\mathcal L$ has a parabolic shape. There are various reasons for focusing on $\mathcal L$ as opposed to $\mathcal A$, e.g. $\mathcal L$ satisfies the Brownian Gibbs property and is the object directly used in the recent construction of the Airy sheet.

See Remark \ref{r:11proof} for the proof of Theorem \ref{T:main-lpp}.

\begin{remark}
Equation \eqref{E:arctic1} for the arctic curve has the form $$
g = \mu + 2 \sigma,
$$
where $\mu = (m+n) \beta^{-1}$ is the expected weight of any individual up-right path, and $\sigma = \sqrt{mn \beta^{-1}(1+\beta^{-1})}$ is standard deviation of the sum of \emph{all} the random variables reachable by any path. The same form for the arctic curve also holds for all the limiting environments we consider in Section \ref{S:Corollaries}. From this formula, one can easily see that the shape of $g$ depends only on the aspect ratio of the rectangle $m/n$ in the sense that:
$$
g_{n,\beta}(m) = n \, g_{1,\beta}\left(\frac{m}{n}\right).
$$
\end{remark}

\begin{remark}
Another equivalent condition to  $(i)$ and $(ii)$ is that  some  scaled  distributional limit of  $L_{n,1}(m_n)$ is the Tracy-Widom law. This also follows from our proof.
\end{remark}

The proof of Theorem \ref{T:main-lpp} goes by relating last passage percolation to nonintersecting walks. For each $n$, by a theorem of \cite{o2003conditioned}
$$
L_{n, k} - L_{n, k-1} - k + 1, \qquad  i=1,\ldots, n
$$
is equal in law to $n$ nonintersecting geometric random walk paths, see Section \ref{S:LPP-geometric} for a precise equivalence. Considerations regarding these random walks gives rise to the scaling parameters $\tau_n$ and $\chi_n$, which may appear somewhat complicated and mysterious at first glance. In fact, they are derived as the unique positive solutions of the following system of equations:
\begin{align}
\label{E:gb-1}
g'(1 + g') \tau_n &= 2 \chi_n^2, \\
\label{E:gb-2}
\frac{-g''}{2} \frac{\tau_n^2}{\chi_n} &= 1.
\end{align}

This system of equations comes from probabilistic considerations about nonintersecting random walks, see the exposition in Section \ref{S:Bernoulli} for the full derivation and details. Intuitively, the top line behaves like a geometric random walk of mean $g'$, and the term $g'(1+g')$ appears as the variance of a single step. The $g''$ term comes matching the curvature of the arctic curve $g$ to the desired parabolic shape of the parabolic Airy line ensemble. The temporal and spatial scaling parameters are completely determined by matching both the Brownian variance and the limiting curvature.

One of the strengths of allowing both the parameters $m_n$ and $\be_n$ to vary arbitrarily in Theorem \ref{T:main-lpp} and of showing uniform convergence rather than just finite dimensional distribution convergence is that we can easily handle convergence of other integrable models of last passage percolation by coupling.

\begin{corollary}
\label{C:intro-cor-lpp} The convergence in Theorem \ref{T:main-lpp} also holds for exponential and Brownian last passage percolation, as well as for Poisson last passage percolation both on lines and in the plane.
\end{corollary}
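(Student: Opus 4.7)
The plan is to realize each of the four target models as a limit of geometric LPP in a regime already covered by Theorem~\ref{T:main-lpp}, and then transfer the uniform convergence through a coupling. The key flexibility is that Theorem~\ref{T:main-lpp} allows $\be_n \in (0,\infty)$ to be an arbitrary sequence. In the regime $\be_n \to 0$, the geometric distribution rescaled by $\be_n$ approximates an exponential, and long partial sums become Brownian by Donsker. In the regime $\be_n \to \infty$, the geometric weights are $\{0,1\}$-valued with density of order $\be_n^{-1}$, and on a sufficiently fine grid the $1$'s approximate a Poisson point process.

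For exponential LPP, I would couple standard exponentials $\{E_{ij}\}$ and geometric weights $\{G_{ij}^{(\be)}\}$ of mean $\be^{-1}$ via a common uniform random variable so that $|\be G_{ij}^{(\be)} - E_{ij}| \le \be$ almost surely. Since LPP is $1$-Lipschitz in the weights, the difference between $\be_n$ times the geometric $k$-path LPP value and the corresponding exponential $k$-path LPP value along any $k$-tuple of paths of length at most $m_n + n$ is at most $k \be_n (m_n+n)$. Choosing $\be_n \to 0$ slowly enough that this error is $o(\chi_n)$, yet fast enough that $\tau_n$ and $\chi_n$ from \eqref{E:tau-xi-geom} converge to the known exponential-LPP scaling, Theorem~\ref{T:main-lpp} transfers uniform convergence to exponential LPP. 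Brownian LPP then follows from exponential LPP by a Donsker-type diffusive limit applied one row at a time (or equivalently by taking geometric LPP with $\be_n \to 0$ and $m_n / n \to \infty$ at the appropriate rate, and invoking Donsker directly on the nonintersecting walk representation from Section~\ref{S:LPP-geometric}).

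For Poisson LPP, I would go in the opposite direction: take $\be_n \to \infty$ and rescale the array so that the locations of the $1$'s converge to a rate-one Poisson point process by standard Poissonization. Last passage values along up-right paths are then counts of points visited, so they converge accordingly. The planar and on-lines cases differ only in how the grid is rescaled---in two coordinates versus one---and both fit into the scope of Theorem~\ref{T:main-lpp} by a suitable choice of $m_n$ and $\be_n$.

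The main obstacle will be making the coupling errors uniform on compact sets of $\N \times \R$ in the rescaled coordinates, rather than just at a fixed point. For exponential LPP the bound above is deterministic and therefore genuinely uniform, but one must still verify that $\tau_n$ and $\chi_n$ converge to the correct exponential-LPP scaling; this is a calculation using the arctic curve \eqref{E:arctic1} and its derivatives under $\be_n \to 0$, and is essentially the content of the heuristic after Theorem~\ref{T:main-lpp} for why the same formulas govern both models. For Brownian and Poisson LPP, tightness in the topology of uniform convergence on compacts is inherited from the geometric approximations once the fluctuation scale is identified, so no additional tightness argument is required beyond the one already provided by Theorem~\ref{T:main-lpp}.
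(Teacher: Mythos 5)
Your approach is essentially the same as the paper's: couple each target model to geometric LPP with a suitably chosen sequence $\be_n$ (small for the exponential and Brownian environments, large for the two Poisson environments) and transfer uniform convergence from Theorem~\ref{T:main-lpp}. Two technical points are worth flagging. First, the deterministic coupling bound $|\be G^{(\be)}_{ij} - E_{ij}| \le \be$ is not quite right: the natural coupling $G = \lfloor E/\log(1+\be)\rfloor$ gives $|E - G\log(1+\be)| < \log(1+\be)$, so converting to $|E - \be G|$ introduces an extra error of order $G\,(\be - \log(1+\be)) \approx E\be/2$, which is not uniformly bounded in $E$. The paper avoids this by choosing $\be_n$ small enough that the \emph{maximal} per-site coupling error over the entire relevant grid is at most $1/(n^2 m_n)$ with probability at least $1-1/n$, and then applying your Lipschitz-in-the-weights observation conditionally on that high-probability event. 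The fix is routine, but the bound as you stated it is incorrect. Second, for both Poisson environments there is an additional event you need to control: the discretized lattice LPP equals the continuous Poisson LPP only on the event that each column (for Poisson lines), respectively each row and column (for Poisson in the plane), contains at most one point. The paper explicitly bounds the probability of failure of this event via a union bound and folds it into the total variation control; without it, the discretized LPP can strictly underestimate the Poisson LPP and the coupling does not close. Once these two points are supplied, your argument matches the paper's proofs of Corollaries~\ref{C:exponential}, \ref{C:Brownian}, \ref{C:Poisson-lines}, and~\ref{C:Poisson-box}.
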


See Section \ref{S:Corollaries} for precise definitions, statements, and scaling relations for the above corollary. As in Theorem \ref{T:main-lpp}, we prove convergence in all feasible parameter directions.

Theorem \ref{T:main-lpp} relies on a convergence theorem for nonintersecting Bernoulli walks. See Figure \ref{fig:NI_bernoulli} and Section \ref{S:Bernoulli} for the precise definition.

\begin{figure}[!ht]
     \subfloat[$\beta = 1,n=5$]{%
       \includegraphics[width=0.32\textwidth]{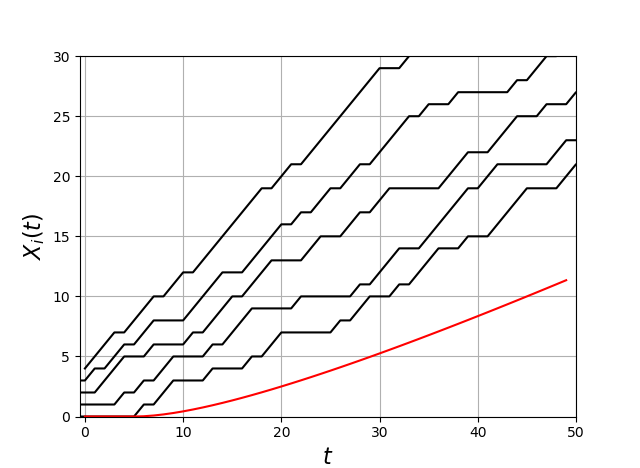}
     }
     \hfill
     \subfloat[$\beta = 2, n=5$]{%
       \includegraphics[width=0.32\textwidth]{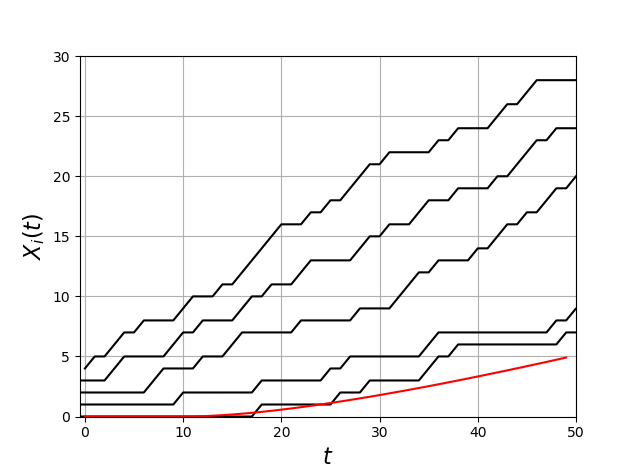}
     }
     \hfill
     \subfloat[$\beta = 1, n=25$]{%
       \includegraphics[width=0.32\textwidth]{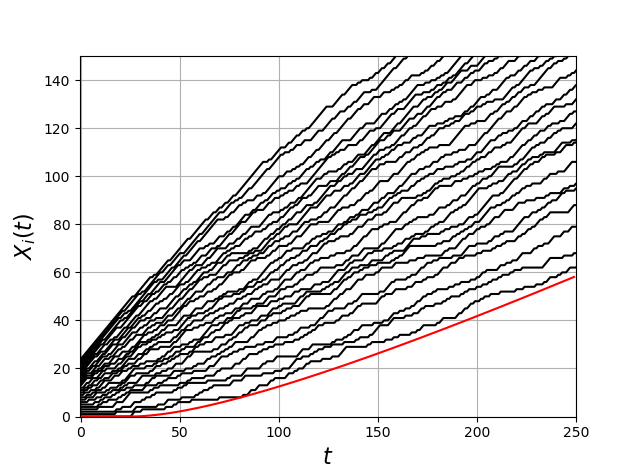}
     }
     \caption{Realizations of nonintersecting Bernoulli random walks for different parameters $\beta$ and $n$.  The arctic curve is shown in red. A contour integral formula allows computation of the fluctuations around the arctic curve in Theorem \ref{T:main-walk}.}
     \label{fig:NI_bernoulli}
   \end{figure}

\begin{theorem}[Nonintersecting Bernoulli walks]
\label{T:main-walk}
Consider sequences of parameters $\beta_n \in (0, \infty)$, $m_n \in \N$ with $m_n \be_n > n$. Let $X_{n,1}(\cdot)<\cdots<X_{n,n}(\cdot)$ be $n$ Bernoulli random walks with mean $\be/(1+ \be)$ started from the initial condition $(0, 1, \dots, n-1)$ and conditioned to never intersect. Define the arctic curve
$$
\gamma_{n, \beta}(m) = \frac{(\sqrt{ m\beta} -\sqrt{n})^2}{1 + \beta} \indic(m \be > n),
$$
the deterministic approximation of the lowest walk $X_{n, 1}(m).$
We define scaling parameters $\chi_n$ and $\tau_n$ in terms of $\ga =\ga_{n, \be_n}$ and its derivative $\ga'$, $\ga''$ evaluated at the point $m_n$:
\begin{equation}
\label{E:tau-xi-bern}
\tau_n^3 = \frac{2\gamma'(1 - \gamma')}{(\ga'')^2}, \qquad \chi_n^3 = \frac{[\gamma'(1 - \gamma')]^2}{2\ga''}.
\end{equation}
Also, let $h_n$ be the linear approximation of $\ga$ at $m_n$.
$$
h_n(m) = \ga + (m - m_n)\ga'
$$
Then the following are equivalent:
\begin{enumerate}[label=(\roman*)]
	\item $\chi_n \to \infty$ with $n$.
	\item The rescaled walks converge in distribution, uniformly over compact sets of $\mathbb{N}\times \mathbb{R}$, to the  parabolic Airy line ensemble $\scrA$ (see Definition \ref{D:airy-line} for a precise definition of the parabolic Airy line ensemble):
	$$
	\frac{(h_n-X_{n,k})(m_n + \lfloor\tau_n t\rfloor)}{\chi_n}\Rightarrow \scrA_k(t).
	$$
\end{enumerate}
\end{theorem}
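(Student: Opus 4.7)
The plan is to combine a saddle point analysis of the double contour integral for the correlation kernel of the nonintersecting walks with a Gibbs resampling argument to promote finite-dimensional convergence to uniform convergence on compacts.

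The direction $(ii)\Rightarrow(i)$ is essentially immediate: the one-point marginal of $\scrA_1$ is nondegenerate and absolutely continuous, so if $\chi_n$ were bounded along a subsequence, the rescaled walk $(h_n-X_{n,1})(m_n+\lfloor\tau_n t\rfloor)/\chi_n$ would be supported on a lattice of spacing at least $1/\chi_n$, contradicting convergence to a continuous marginal.

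For the main direction $(i)\Rightarrow(ii)$, I would first express the joint law of $(X_{n,k}(m_j))_{k\le n,\,j\le \ell}$ at any finite collection of times $m_1<\dots<m_\ell$ as a determinantal point process. The packed initial condition $(0,1,\dots,n-1)$ allows a clean Karlin--McGregor and biorthogonalization computation producing an extended kernel $K_n(s,x;t,y)$ that can be written as a double contour integral over suitable curves in the complex plane. The standard next step is steepest descent in the scaling window centered at $(m_n,h_n(m_n+\lfloor\tau_n t\rfloor))$: the critical point equation for the exponent selects out the point on the arctic curve, and demanding that the cubic and quadratic corrections in the Taylor expansion match the Airy scaling gives precisely the defining equations \eqref{E:gb-1}--\eqref{E:gb-2} for $\tau_n$ and $\chi_n$. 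Verifying uniform decay along the descent contours then yields convergence of the rescaled kernel to the extended Airy kernel, and hence finite-dimensional convergence to $\scrA$.

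To upgrade to uniform convergence on compact sets, I would invoke the Gibbs property of the Bernoulli walks: conditioned on the values at the boundary of a space--time rectangle, the interior trajectories are distributed as independent Bernoulli walks conditioned on their endpoint values and on nonintersection. Combined with the one-point tightness that is immediate from kernel convergence and the absolute continuity of the Airy marginal, a resampling argument in the style of Corwin--Hammond yields modulus of continuity estimates that hold uniformly in $n$. Together with the finite-dimensional limit these give tightness in the uniform-on-compacts topology, and the limit is identified as $\scrA$ by its finite-dimensional marginals.

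The main obstacle will be executing the saddle point analysis uniformly across all admissible parameter regimes. Since both $\beta_n$ and the aspect ratio $m_n/n$ may separately approach $0$, $\infty$, or a finite positive limit, the location of the critical points and the topology of the steepest descent contours change, and one must check that the non-leading corrections remain uniformly controlled in every such regime. A secondary technical hurdle is making the Gibbs resampling bounds uniform in $\beta_n$, since the Bernoulli step distribution degenerates as $\beta_n\to 0$ or $\beta_n\to\infty$, and one needs enough fluctuation in each step for the resampling argument to produce nontrivial modulus of continuity estimates.
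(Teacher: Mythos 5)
Your overall strategy matches the paper's: a saddle-point analysis of a double contour integral representation of the correlation kernel, followed by a Gibbs-property argument to upgrade finite-dimensional convergence to uniform convergence. Both your $(ii)\Rightarrow(i)$ observation (a bounded $\chi_n$ forces a lattice-valued limit) and your identification of the kernel asymptotics as the core computation align with the paper, which starts from the Borodin--Ferrari determinantal formula rather than rederiving the kernel from Karlin--McGregor, but that is inessential. You also correctly flag the genuine difficulty: constructing descent contours that work uniformly as $\beta_n$ and $m_n/n$ vary across all regimes. The paper's Propositions \ref{P:G1-bounds-w} and \ref{P:G1-bounds-z} are exactly that: piecewise linear-plus-circular contours built by tracking arguments of $L'$, with all error terms expressed in terms of the rescaled distances $\rho_n\delta_n$, $\rho_n(1-\delta_n)$, $\rho_n(\delta_n+\beta_n^{-1})$ to the three poles, and Lemma \ref{L:necess-scale} showing these diverge precisely when $\chi_n\to\infty$.

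Where your route genuinely differs is the tightness step. You propose a Corwin--Hammond style argument: one-point tightness plus Gibbs resampling yielding modulus-of-continuity estimates uniform in $n$. The paper instead proves a general result (Theorem \ref{T:uniform}) by a Radon--Nikodym change of measure: conditionally on $\mathcal{E}_k\scrA^n$ and $\bar{\scrA}^n_{k+1}$, the law of the top $k$ walks is absolutely continuous with respect to that of \emph{unconditioned} independent bridges, with density $Q_n^{-1}\cdot \indic(\text{avoidance})$, where $Q_n$ is the avoidance probability. Lemma \ref{L:positive} shows $Q_n^{-1}$ is tight (using separation of the Airy lines and a uniform bound on the sublevel set $\bar{\scrA}^n_2$); since unconditioned bridges are tight by the Brownian limit hypothesis, so are the conditioned walks. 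This sidesteps pointwise modulus-of-continuity estimates entirely, and in particular makes your secondary worry moot: there is no need to extract fluctuation from individual Bernoulli steps as $\beta_n\to 0$ or $\infty$, because the argument runs at the level of whole rescaled trajectories, which converge to Brownian bridges by hypothesis. If you pursue the Corwin--Hammond route you would need to confront that degeneracy explicitly; the paper's Radon--Nikodym argument is both shorter and more robust, and is stated abstractly enough to apply verbatim to the Brownian and Poisson cases in Section \ref{S:Corollaries}.
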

Theorem \ref{T:main-walk} is proved in Section \ref{ss:Bernoulli}. The arctic curve can be expressed in terms of the probability $p$ of the Bernoulli walks taking an up step and the probability $q = 1 - p$ of taking a flat step. We then get the following expression for the arctic curve:
\begin{equation*}
\ga(m) = (\sqrt{mp} - \sqrt{nq})^2 \indic(mp > nq).
\end{equation*}

After a linear transformation of the graphs, the Bernoulli walks map to geometric walks. Thus Theorem \ref{T:main-walk} can be used to prove Theorem \ref{T:main-lpp}. By equivalence to the classical last passage models discussed above, we also get a version of Theorem \ref{T:main-walk} for other nonintersecting random walk ensembles. The following `meta-corollary' is intentionally vague -- we refer the reader to Section \ref{S:Corollaries} for precise statements. 
\begin{corollary}
	The convergence in Theorem \ref{T:main-walk} also holds for nonintersecting  geometric, exponential, and Poisson walks, as well as for nonintersecting Brownian motions.
\end{corollary}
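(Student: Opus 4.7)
The plan is to transfer the uniform convergence of last passage percolation increments (Corollary \ref{C:intro-cor-lpp}) to the corresponding nonintersecting walk ensembles via RSK-type distributional identities. In each of the four settings there is a standard identity of the form: the process of $n$ nonintersecting walks started from the staircase initial condition, viewed as a function of the spatial parameter $m$, equals in law the process $(L_{n,k}(m)-L_{n,k-1}(m))_{k=1}^n$ of $k$-path LPP increments in the matching environment. For nonintersecting geometric walks this is the theorem of \cite{o2003conditioned} already quoted in the paper; for nonintersecting Brownian motions it is Warren's theorem realizing Dyson Brownian motion as the increment process of Brownian LPP; for nonintersecting Poisson walks it is the analogous statement coming from Poissonized RSK; and for nonintersecting exponential walks it is the continuous-time analog of O'Connell's identity.

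First, I would state each of the four RSK-type identities carefully. Since these are equalities in law of full stochastic processes (not merely of finite-dimensional marginals), convergence in the topology of uniform convergence on compacts transfers directly from the increment side to the walk side. Applying Corollary \ref{C:intro-cor-lpp} in each of the four environments then immediately produces the Airy line ensemble limit for the corresponding nonintersecting walk ensemble, in all feasible parameter directions.

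The second task is bookkeeping of scaling parameters. The remark following Theorem \ref{T:main-lpp} shows that in every integrable model considered here the arctic curve has the common form $g=\mu+2\sigma$, where $\mu$ is the expected weight of a single up-right path and $\sigma$ the standard deviation of the total weight of random variables reachable by a path. The temporal and spatial parameters $\tau_n,\chi_n$ are then determined by the variance/curvature matching system \eqref{E:gb-1}--\eqref{E:gb-2} in exactly the same way as in the Bernoulli case. For each walk model one computes $g, g', g''$ explicitly and solves for $\tau_n,\chi_n$, obtaining closed-form expressions analogous to \eqref{E:tau-xi-bern}; this is routine algebra.

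The main obstacle will be giving a clean statement of the RSK-type identity in the exponential walk case, which is the least standard of the four. If a clean reference is not available, the fallback is to derive convergence of nonintersecting exponential walks as a diffusive limit of nonintersecting geometric walks: exponential random variables arise as $\beta\to\infty$ rescalings of geometrics, and the same coupling and tightness arguments used to pass from Theorem \ref{T:main-lpp} to Corollary \ref{C:intro-cor-lpp} propagate uniform-on-compact convergence through this limit without additional work.
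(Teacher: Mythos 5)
Your plan matches the paper's implicit argument: the corollary follows from the process-level RSK identities between nonintersecting walks and $k$-path LPP increments in each environment, combined with the LPP convergence in Corollary \ref{C:intro-cor-lpp}. One caution on your fallback for the exponential case: the paper's coupling arguments (as in the proof of Corollary \ref{C:exponential}) operate on the i.i.d.\ weight arrays and hence couple LPP values, not the conditioned walk ensembles directly, so a coupling of unconditioned increments does not pass through the nonintersection conditioning; even the fallback route therefore still needs the exponential RSK identity to identify the resulting process as nonintersecting exponential walks. Also a small parametrization slip: in the paper's odds convention the geometric-to-exponential degeneration takes $\beta\to 0$ (large mean), not $\beta\to\infty$.
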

The nonintersecting Bernoulli random walks appear in the Sepp\"al\"ainen-Johansson last passage model, and our results thus apply in this case, see Corollary \ref{C:seppalainen}.

The starting point for the proof of Theorem \ref{T:main-walk} is a determinantal formula for nonintersecting Bernoulli walks with a kernel given in terms of contour integrals, see \eqref{E:rw-kern} and \eqref{E:rw-kern-2}. Formulas for this process essentially first appeared in \cite{johansson2005arctic} (see also \cite{johansson2001discrete}), but the precise one we apply comes from \cite{borodin2013markov}. We establish convergence of the finite dimensional distributions to those of the Airy line ensemble by taking a limit of this formula. This has been done in many related contexts. Convergence of such formulas is usually handled by a steepest analysis around a double critical point.

The main distinction between our analysis and prior work is that for us, the parameters $m_n/n$ and $\be_n$ can vary with $n$. This causes difficulties that are not there in the fixed parameter case. We deal with this by choosing contours depending on the parameters using careful geometric considerations. We make the connection between the kernel and the probabilistic features of the models apparent by using physical intuition to guide the analysis.

To go from convergence of finite dimensional distributions to uniform convergence requires a tightness argument for nonintersecting random walks. In the context of nonintersecting Brownian motions, tightness was proven in \cite{CH} by exploiting the Brownian Gibbs property (see also \cite{DV} for an alternate proof). Here we give a concise and general proof of tightness that applies to both nonintersecting random walk ensembles and nonintersecting Brownian motions.

\subsection*{Related work}

There is a large body of literature on last passage percolation and nonintersecting random walks in relation to the Airy line ensemble. This is a very partial review of the literature, with results most directly related to the present work. The interested reader should see the review articles \cite{corwin2016kardar, ferrari2010random, quastel2011introduction, takeuchi2018appetizer} and the books \cite{romik2015surprising, weiss2017reflected} for a broader introduction to the area.

\cite{prahofer2002scale} identified the Airy line ensemble as the limit of the multi-layer polynuclear growth model. Their work built on the work of \cite{baik1999distribution} which finds the limit of the length of the longest increasing subsequence of a uniform random permutation, see also \cite{johansson2000shape}.

\cite{prahofer2002scale} proved convergence of finite dimensional distributions. In the context of last passage percolation in the geometric environment along an antidiagonal, \cite{johansson2003discrete} strengthened this to convergence in the uniform-on-compact topology for the top line $\scrA_1$. \cite{CH} proved uniform-on-compact convergence for the whole Airy line ensemble in the context of Brownian last passage percolation.

The Airy line ensemble has also been identified as the limit of many other models, e.g. \cite{ferrari2003step, okounkov2003correlation, johansson2005arctic, borodin2006stochastic, imamura2007dynamics, borodin2008asymptotics, petrov2014asymptotics}. Many of these papers focus only on proving convergence to the Airy process $\scrA_1$. However, the analysis required for proving convergence to the whole Airy line ensemble is essentially the same.

The Gibbs property for ensembles of Brownian motions and random walks has also proven useful for showing tightness of positive temperature analogues of the models in this paper. \cite{corwin2016kpz} used such methods to prove tightness of the sequence of functions coming from the O'Connell-Yor directed polymer model and analyze the limiting KPZ line ensemble.  \cite{corwin2018transversal} used such methods to prove tightness and transversal fluctuation results about asymmetric simple exclusion and the stochastic six vertex model.

The explicit relationship between nonintersecting random walks and last passage percolation that we use is from \cite{o2003conditioned}, which builds on work from \cite{o2002representation} and \cite{konig2002non}. This relationship has various elegant generalizations to related problems, see \cite{biane2005littelmann, o2012directed}.

\subsection*{Organization of the paper}

In Section \ref{S:Bernoulli}, we give a precise definition of nonintersecting Bernoulli walks and derive the scaling parameters in Theorem \ref{T:main-walk} using probabilistic reasoning. In Section \ref{S:FDD}, we perform the asymptotic analysis required to prove convergence of finite dimensional distributions for nonintersecting Bernoulli walks. In Section \ref{S:Uniform}, we present a general tightness argument that allows us to upgrade to uniform convergence in Theorem \ref{T:main-walk}. In Section \ref{S:LPP-geometric}, we formally introduce last passage percolation and translate Theorem \ref{T:main-walk} to get Theorem \ref{T:main-lpp}. In Section \ref{S:Corollaries}, we prove corollaries related to other models by using appropriate couplings.

\section{Nonintersecting Bernoulli walks and the Airy line ensemble}\label{S:Bernoulli}

For $\be \in (0, \infty)$, a random function $X:\N \to \Z$ is a Bernoulli random walk if it has independent increments $X(m+1) - X(m)$ with Bernoulli distribution with mean $\be/(1+\be)$. The parameter $\be$ itself is the ratio of up steps to flat steps, and will be called the {\bf odds}.  This particular parameter makes the analysis of contour integrals cleaner.

 A collection $X_1(\cdot), X_2(\cdot), \dots, X_n(\cdot)$ are \textbf{nonintersecting Bernoulli walks} if each of the $X_i$s are independent Bernoulli walks with odds $\beta$ started from the initial condition $X_i(0) = i-1$, conditioned so that
$$
X_1(m) < X_2(m) <  \dots < X_n(m) \qquad \text{
for all }m \in \N.
$$
Since this is a measure $0$ event, this must formally be defined so the above equation holds for all $m \le m_0$, and then $m_0$ is taken to $\infty$. This setup is also known as the Krawtchouk ensemble and the walks can alternatively be described in terms of a Doob transform involving a Vandermonde determinant, see \cite{konig2002non} for discussion.

Theorem \ref{T:main-walk} says that the scaling limit of the edge of nonintersecting Bernoulli walks is the parabolic Airy line ensemble. 

\begin{definition}
	\label{D:airy-line}
	The {\bf parabolic Airy line ensemble} $\scrA = (\scrA_1, \scrA_2, \dots)$ is a sequence of random continuous functions uniquely characterized by the following two properties:
	\\1. (Non-intersecting) For every $t \in \R$, we have (almost surely) that $\scrA_1(t) > \scrA_2(t) > \dots$.
	\\2. (Determinantal) For any finite set of times $t_1 < t_2 < \dots < t_k$, the set of points
	$$
	\{\scrA_i(t_j) : i \in \N, j \in \{1, \dots, k\} \}
	$$
	are determinantal with kernel explicitly given by $K_\scrA(x, s; y, t) = H_\scrA+ J_\scrA$, with
	\begin{equation*}
	H_\scrA = \frac{-1_{s > t}}{\sqrt{4 \pi(s-t)}}e^{-\frac{(x-y)^2}{4 (s-t)}}, \quad  \quad J_\scrA= \frac{1}{(2\pi i)^2} \int_{\Ga_u} \int_{\Ga_v} \frac{G(x, s; v)}{G(y, t; u)} \frac{d v d u}{u - v}.
	\end{equation*}
	where
	\begin{equation*}
	G(x, s; u) = \exp \lf( ux + u^2s- \frac{1}{3}u^3 \rg).
	\end{equation*}
	Here the contour $\Ga_v$ goes from $e^{-i 2\pi/3}\infty$ to $0$ to $e^{i 2\pi/3} \infty$, and the contour $\Ga_u$ goes from $e^{-i \pi/3}\infty$ to $0$ to $e^{i \pi/3} \infty$.
	
	``Determinantal'' here means that for any subset of size $k$ of the random points $\scrA_i(t_j)$, the joint intensity $\rho_k$ of the point process is given by a $k\times k$ determinant of $K_\scrA$ according to
	\begin{equation*}
	    \rho_k\left( (x_1,t_1) , \ldots , (x_k,t_k) \right) = \det\left[ K_\scrA(x_i,t_i;x_j,t_j)\right]_{i,j=1}^k.
	\end{equation*}
	 See Chapter 4 of \cite{hough2009zeros} for an introduction to determinantal point processes and in particular Definition 4.2.1. for this definition in the general setting.   
\end{definition}
The fact that these two properties uniquely determine the parabolic Airy line ensemble is proven in Theorem 3.1 of  \cite{CH}.

We use the adjective ``parabolic'' to differentiate from the usual Airy line ensemble, the process $ \mathcal{A}(t) = \scrA(t) + t^2$, which is stationary in time. Note also that $\scrA$ has a flip symmetry: $\scrA(\cdot) \eqd \scrA(- \;\cdot)$.

We leave the discussion of the kernel formula for $\scrA$ to the end of the section as it is best motivated by first seeing the kernel for nonintersecting Bernoulli walks. For now, we continue with setting up the scaling under which nonintersecting Bernoulli walks converge to the parabolic Airy line ensemble.

As there is a symmetry between the top and bottom walks in an ensemble of $n$ nonintersecting Bernoulli walks, we will only analyze the bottom walks. For large $n$, the bottom walk concentrates around a deterministic `arctic' curve up to a lower order correction. The shape of the curve can be deduced from analyzing contour integral formulas (we will say a bit more about this in Section \ref{S:FDD}). The arctic curve $\ga_{n, \be}$ is given by the formula
\begin{equation}
\label{E:gamma-def}
\ga_{n, \be}(m) = \frac{(\sqrt{m \be} - \sqrt{n})^2}{1 + \be}\indic(m \be > n).
\end{equation}
The arctic curve $\ga =\ga_{n, \be}$ is constantly equal to $0$ for small $m$. This is the region where the higher Bernoulli walks have not yet moved to allow space for the bottom walk to start to move itself. For fixed $n$, in the limit $m \to \infty$ the slope of $\ga$ increases towards a limit of $\be/(1 + \be)$. This limit is the slope of an unconditioned Bernoulli walk. This property of the arctic curve is very natural: at large time scales, the walks spread further apart and so the nonintersecting condition is felt less and less.

Now let $\be_n \in (0, \infty), m_n \in \N$ be two sequences of real numbers with $m_n \be_n > n$ for all $n$ as in Theorem \ref{T:main-walk}. As in that theorem, we let $X_{n, i}, i \in \{1, \dots, n\}$ be $n$ nonintersecting Bernoulli walks with odds $\be_n$. We seek to derive scaling parameters $\chi_n$ and $\tau_n$ and a mean shift function $h_n$ so that
\begin{equation}
\label{E:ber-conv}
\chi_n^{-1}\lf(h_n - X_{n,i} \rg)(m_n + \floor{\tau_n t}) \Rightarrow \scrA_i(t)
\end{equation}
converges to the parabolic Airy line ensemble $\scrA$. To derive these parameters, we will use the Brownian Gibbs property of the parabolic Airy line ensemble, see \cite{CH}.
\begin{description}
\item{\textbf{Brownian Gibbs property:}}	For any $s < t$ and $k \in \N$, conditionally on the values of $\scrA_i(r)$ for $i \in \{1, \dots, k\}$ and $r \in \{s, t\}$ and the values of $\scrA_{k+1}(r)$ for $r \in [s, t]$, the Airy lines $\scrA_1 > \dots > \scrA_k$ restricted to the interval $[s, t]$ are given by $k$ Brownian bridges of variance $2$ between the appropriate endpoints, conditioned so that the lines remain nonintersecting.
\end{description}

Here when we say that a Brownian bridge (or Brownian motion) has variance $2$, we simply mean that its quadratic variation over any interval is equal to twice the length of that interval.

Nonintersecting Bernoulli walks satisfy a Gibbs property analogous to the Brownian Gibbs property of the parabolic Airy line ensemble (see Section \ref{S:Uniform} for details). For this Gibbs property to have any hope of surviving into the limit to give the Brownian Gibbs property, the shift $h_n$ needs to be linear; this is essentially due to the fact that the Brownian Gibbs property is preserved under linear shifts but not under shifts by any other function. We should therefore take $h_n$ to be the linear approximation of the arctic curve near $m_n$.

To see a limit which is locally Brownian with variance $2$, we also require that the spatial and temporal scaling factors $\chi_n$ and $\tau_n$ have the required relationship for random walks rescaling to variance $2$ Brownian motions. Near the point $m_n$, the slope of the bottom random walks is $\ga'=\ga_{n, \be_n}'(m_n)$. In a small local window, the walks do not feel the nonintersecting condition and look like unconditioned Bernoulli walks with this slope. For a Bernoulli walk with this slope to converge to Brownian motion with variance $2$, we require the scaling relationship
\begin{equation}
\label{E:bernoulli-scaling}
\ga'(1 - \ga') \tau_n = 2 \chi_n^2.
\end{equation}
The factor $\ga'(1 - \ga')$ is the effective variance of each step the lowest Bernoulli walks near the time $m_n$ (i.e. the variance of a Bernoulli random variable with mean $\ga'$). The scaling relationship \eqref{E:bernoulli-scaling} always needs to hold for any collection of nonintersecting random walks to converge to the parabolic Airy line ensemble, with the factor $\ga'(1- \ga')$ replaced by the effective variance of those random walks; several examples of this are contained in Section \ref{S:Corollaries}.

Finally, we need to scale so that the limit is stationary after the addition of a parabola. Since $h_n$ was given by the first order Taylor expansion of $\ga$ at $m_n$, the leading term in the difference in \eqref{E:ber-conv} is given by the second order term of the Taylor expansion of $\ga$ at $m_n$. In order to get the parabola $t^2$, we need the condition
\begin{equation}
\label{E:parabola-scale}
\frac{\ga''}2 \frac{\tau_n^2}{\chi_n}  = 1.
\end{equation}
The formulas \eqref{E:tau-xi-bern} are the unique solutions to \eqref{E:bernoulli-scaling} and \eqref{E:parabola-scale}. In the case of fixed $\be$ and $m_n = \al n$ for some $\al$, these two relationships give the usual KPZ scaling parameters of $\chi_n = c_1 n^{1/3}$ and $\tau_n = c_2 n^{2/3}$ for constants $c_1$ and $c_2$.

To prove that nonintersecting Bernoulli walks converge to the parabolic Airy line ensemble, we analyze determinantal formulas. We use a specialization of a formula from \cite{borodin2013markov}, Proposition 5.5, which is a reformulation of Theorem 2.25, Corollary 2.26 and Remark 2.27 in \cite{borodin2014anisotropic}. For any $\be, n$, the point process
$$
\{(X_{n, i}(t), t) : t \in \N, i \in \{1, \dots, n\} \}
$$
is determinantal on $\N^2$ with kernel
\begin{align}
\label{E:rw-kern}
K_{n, \be}(x, s; y, t) = H_{n, \be}(x, s; y, t) + J_{n, \be}(x, s; y, t),
\end{align}
where
\begin{align}
\label{E:rw-kern-2}
H_{n, \be} = - \indic_{s > t, x > y} \be^{x-y} \binom{s -t}{x -y} \quad \text{and} \quad
J_{n, \be} = \frac{1}{(2\pi i)^2}\int_{\Ga_w}\int_{\Ga_z} \frac{F_{n, \beta} (x, s ; w)}{F_{n, \beta}(y, t ; z)} \frac{1}{w(w-z)}\,dz\,dw.
\end{align}
Here
$$
F_{n, \be} (x, s; w) = (1 + \beta w)^s (1- w)^n w^{-x}.
$$
In the formulas above, the contours $\Ga_w$ and $\Ga_z$ are disjoint, oriented counterclockwise, and go around the poles at $0$ and $1$ respectively without encircling any other poles. Note that in \cite{borodin2013markov}, $H_{n, \be}$ is given as a contour integral which can be easily evaluated as the binomial coefficient \eqref{E:rw-kern-2} for the parameter regime we consider.

We will prove convergence of the kernel $K_{n, \be}$ to the kernel for the parabolic Airy line ensemble.
\cite{borodin2008asymptotics} give a contour integral formula for the kernel of stationary version of the Airy line ensemble, which we translate to our setting as follows.

\begin{lemma}
	\label{E:non-stat-kernel}
	The parabolic Airy line ensemble has kernel $K_\scrA(x, s; y, t) = H_\scrA+ J_\scrA$, with
	\begin{equation}
	\label{E:A-kernel}
	H_\scrA = \frac{-1_{s > t}}{\sqrt{4 \pi(s-t)}}e^{-\frac{(x-y)^2}{4 (s-t)}}, \quad  \quad J_\scrA= \frac{1}{(2\pi i)^2} \int_{\Ga_u} \int_{\Ga_v} \frac{G(x, s; v)}{G(y, t; u)} \frac{d v d u}{u - v}.
	\end{equation}
	where
	\begin{equation}
	\label{E:G-formula}
	G(x, s; u) = \exp \lf( ux + u^2s- \frac{1}{3}u^3 \rg).
	\end{equation}
	
	Here the contour $\Ga_v$ goes from $e^{-i 2\pi/3}\infty$ to $0$ to $e^{i 2\pi/3} \infty$, and the contour $\Ga_u$ goes from $e^{-i \pi/3}\infty$ to $0$ to $e^{i \pi/3} \infty$.
\end{lemma}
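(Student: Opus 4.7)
The plan is to derive the stated kernel from the contour integral formula for the extended Airy$_2$ kernel given in \cite{borodin2008asymptotics}, exploiting the fact that the Airy line ensemble $\scrA$ is a deterministic parabolic shift of its stationary version $\scrA(t)+t^2$. Since \cite{borodin2008asymptotics} write the kernel for the stationary version, the remaining work is a change of variable in the integrand and a homotopy of contours.

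\emph{Reduction to the stationary case.} Let $\tilde K$ denote the kernel of the point process $\{(\scrA_i(t)+t^2,\,t)\}$. Under a deterministic shift $(x,t)\mapsto (x-t^2,t)$ of the points of a determinantal process, the new kernel is obtained from the old by the inverse substitution, up to the usual gauge freedom $K(x,s;y,t)\sim f(x,s)K(x,s;y,t)/f(y,t)$. Thus
$$
K_\scrA(x,s;y,t) \;=\; \tilde K(x+s^2,\,s;\,y+t^2,\,t).
$$

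\emph{Substitution in the $J$-part.} The stationary kernel from \cite{borodin2008asymptotics} has, up to gauge, a double contour integrand proportional to $e^{vx-v^3/3}/e^{uy-u^3/3}$ divided by a factor of the form $u-v+t-s$. After substitution $x\mapsto x+s^2$, $y\mapsto y+t^2$, the numerator picks up $e^{vs^2}$ and the denominator picks up $e^{ut^2}$. The cube-completion identity
$$
(v-s)^3/3 \;=\; v^3/3 - v^2 s + v s^2 - s^3/3,
$$
together with the change of contour variable $v\mapsto v-s$, $u\mapsto u-t$, converts the integrand into $G(x,s;v)/G(y,t;u)$ as in \eqref{E:G-formula}, divided by $u-v$. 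The residual factors $e^{-sx-2s^3/3}$ in the numerator and $e^{-ty-2t^3/3}$ in the denominator are exactly of gauge type and are dropped. Under the same shift of variable, the contours of \cite{borodin2008asymptotics} deform, without crossing any of the poles at $v=0$, $u=0$, or $u=v$, into the stated $\Ga_v$ (opening at angles $\pm 2\pi/3$) and $\Ga_u$ (opening at angles $\pm\pi/3$).

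\emph{Identification of $H_\scrA$.} The Gaussian piece arises as a residue contribution. When $s>t$, one of the two contours in the stationary formula must be deformed across the pole coming from the factor $u-v+t-s$, generating a residue that reduces the double integral to a single contour integral of the form $\tfrac{1}{2\pi i}\int G(x,s;u)/G(y,t;u)\,du$. Completing the square in $u$ in the exponent $u(x-y)+u^2(s-t)$ turns this single integral into the Gaussian $(4\pi(s-t))^{-1/2}e^{-(x-y)^2/(4(s-t))}$, with sign and orientation giving the $-\indic_{s>t}$ prefactor. The main obstacle I foresee is not the analytic content --- which amounts to routine cube-completion and Gaussian integration --- but the careful bookkeeping of orientations, contour homotopies, and gauge factors needed to make signs and prefactors line up exactly with the stated form.
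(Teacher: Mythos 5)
Your proposal takes a genuinely different route from the paper's. The paper starts from the form of the stationary kernel in \cite{borodin2008asymptotics} (their Proposition 4.8) in which the Gaussian piece is \emph{already} split off as a separate term and the denominator of the double integral is simply $v-u$, so no residue computation is needed; it then applies the parabolic shift $(x,s)\mapsto(x+s^2,s)$ with the appropriate gauge factor, and finally invokes the flip symmetry $\mathcal A(\cdot)\stackrel{d}{=}\mathcal A(-\,\cdot)$ together with the contour reflections $u\mapsto -u$, $v\mapsto -v$ to turn the indicator $\indic_{s<t}$ into $\indic_{s>t}$ and swap the two Airy contours. You instead assume a form of the kernel whose double integral has denominator $u-v+t-s$, carry out the cube completion via $v\mapsto v-s$, $u\mapsto u-t$ (which does correctly kill the $t-s$ shift in the denominator and produce the gauge factors $e^{-sx-2s^3/3}$, $e^{-ty-2t^3/3}$), and then recover the Gaussian part by picking up a residue at $u=v$ when deforming contours for $s>t$. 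This is a valid alternative strategy familiar from other presentations of the extended Airy kernel; what you lose relative to the paper is that the sign, orientation, and the indicator coming from the residue must be tracked carefully, and you acknowledge this without resolving it. What you gain is that the flip symmetry is not needed: your residue naturally produces the indicator $\indic_{s>t}$.

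Two concrete corrections. First, the claim that the contour deformation must avoid "poles at $v=0$, $u=0$" is wrong: the function $G(x,s;u)=\exp(ux+u^2 s-\tfrac13 u^3)$ is entire, so the only singularity of the integrand is the pole where the denominator vanishes, at $u=v$ (or $u-v+t-s=0$ in the unshifted form). You may be conflating this with the prelimit Bernoulli-walk kernel, where poles at $0$ and $1$ genuinely appear. Second, the form you start from does not literally match what the paper cites from \cite{borodin2008asymptotics}; if you intend to use a different equivalent representation you should say which one, since the whole residue step depends on the denominator having the form $u-v+t-s$ rather than $u-v$.
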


The Gaussian term in the kernel $K_\scrA$ suggests the locally Brownian behaviour (with variance $2$) of the Airy line ensemble. The kernel formula for $G$ is a manifestation of the KPZ $1:2:3$ scaling. The spatial parameter $x$ is paired with $u$, the time parameter gets paired with $u^2$, and there is a third $u^3$ term which can be thought of as having come from rescaling the number of lines $n$.

\begin{proof}
	We start with a formula for the kernel $K_{\mathcal{A}}({x}, {s}; {y}, {t})$ for the stationary Airy line ensemble ${\mathcal{A}}_i(t) = \scrA_i(t) + t^2$. This appears in \cite{borodin2008asymptotics} (see Proposition 4.8) and is based on a formula in \cite{johansson2003discrete} and \cite{prahofer2002scale}.
	\begin{equation}
	\label{E:stat-kernel}
	\begin{split}
	K_{\mathcal{A}}({x}, {s}; {y}, {t}) = \frac{-1_{{s} < {t}}}{\sqrt{4 \pi({t} - {s})}}\exp &\lf(-\frac{({y} -{x})^2}{4({t} -{s})} -\frac{1}2({t} - {s})({y} + {x}) + \frac{1}{12}({t} - {s})^3\rg) \\
	+ \frac{1}{(2\pi i)^2} \int_{\Ga'_u} \int_{\Ga'_v} &\exp \bigg({x}{s} - {y}{t} - \frac{1}3 {s}^3 + \frac{1}3 {t}^3 - ({x} - {s}^2) v + ({y} - {t}^2)u \\
	&- {s} v^2 + {t} u^2 + \frac{1}3 (v^3 - u^3) \bigg)\frac{dv du}{v - u}.
	\end{split}
	\end{equation}
	Here the contours in $u$ and $v$ are switched when compared with \eqref{E:A-kernel}. That is, $\Ga_u = \Ga'_v$ and $\Ga'_u  = \Ga_v$.
	Since $\scrA(t) = {\mathcal{A}}(t) - t^2$, we can express a kernel for $\scrA$ by changing coordinates and conjugating by a term of the form $f(x, s)g(y, t)$. Define
	$$
	\bar{K}_{\scrA}({x}, {s}; {y}, {t}) = \exp \lf(-({x} + \frac{2}3 s^2)s + ({y} + \frac{2}3 {t}^2) t\rg) K_{\mathcal{A}}({x} + {s}^2, {s}; {y} + {t}^2, {t}).
	$$
	After simplification, this gives
	$$
	\bar{K}_\scrA({x}, {s}; {y}, {t}) = \frac{-1_{{s} < {t}}}{\sqrt{4 \pi({t} - {\bs})}}e^{-\frac{({y} -{x})^2}{4({t} -{\bs})}} + \frac{1}{(2\pi i)^2} \int_{\Ga'_u} \int_{\Ga'_v} \frac{G({y}, {t}; u)}{G({x}, {s}; v)} \frac{dv du}{v - u}
	$$
	Here the contours are the same as in \eqref{E:stat-kernel}. Now by the flip symmetry $\scrA(\cdot) \eqd \scrA(-\;\cdot)$, we note that the kernel
	$$
	K_\scrA(x, s; y, t) = \bar{K}_\scrA({x}, -{s}; {y}, -{t}) = \frac{-1_{{\bs} > {t}}}{\sqrt{4 \pi({s} - t)}}e^{-\frac{({y} -{x})^2}{4({s} -t)}} + \frac{1}{(2\pi i)^2} \int_{\Ga_u'} \int_{\Ga_v'} \frac{G({y}, -{t}; u)}{G({x}, -s; v)} \frac{dv du}{v - u},
	$$
	also gives the parabolic Airy line ensemble. Making the change of variables $v \mapsto -v$ and $u \mapsto -u$ in the above integral then gives the representation in the lemma. Note that when we do this, the contours in $u$ and $v$ switch.
\end{proof}

\section{Kernel convergence for Bernoulli walks}\label{S:FDD}

In this section we prove the preliminary version of Theorem \ref{T:main-walk}, convergence of finite dimensional distributions. The proof of Theorem \ref{T:main-walk} is then completed in Section \ref{S:Uniform}.

Convergence of finite dimensional dimensional distributions follows from appropriately strong convergence of $K_{n, \be_n}$ to $K_\scrA$ after rescaling and conjugation. Throughout this section will simplify notation along the lines of $K=K_n = K_{n, \be_n}$ depending on context.

Convergence of the binomial term $H_n$ is easier to understand probabilistically, so we will start there. This will reveal the necessary conjugation of the kernel. In this section $\ga_n, \ga_n', \ga_n''$ refer to $\ga_{n, \be_n}$ and its derivatives evaluated at the point $m_n$. We have the following translation between the unscaled parameters $x_n, s_n, y_n, t_n$ in the prelimit and their limiting versions $x, s, y, t$:
\begin{align}
\label{E:xnsn}
        &s_n = m_n + \floor{\tau_n s}, \qquad  &&t_n = m_n + \floor{\tau_n t}, \qquad \\
        \nonumber
&x_n = \floor{\ga_n + \ga_n' \tau_n s - \chi_n x}, \qquad &&y_n = \floor{\ga_n + \ga_n' \tau_n t - \chi_n y}
\end{align}

In this scaling,
$$
H_n(x_n, s_n; y_n, t_n) = - 1_{s_n > t_n, x_n > y_n} \be_n^{x_n-y_n} \binom{\tau_n (s-t) + \ep_1}{\ga_n' \tau_n(s-t) +  \chi_n(x-y) + \ep_2}.
$$
Here $\ep_1, \ep_2 \in [-2, 2]$ are negligible error terms coming from the floors in $x_n, s_n, y_n, t_n$.
The binomial factor in $H_n$ already suggests (i.e.\ by the de Moivre-Laplace central limit theorem) that under some rescaling, $H_n$ should converge to a Gaussian kernel. Moreover, $\chi_n$ and $\tau_n$ are already set-up to be the correct spatial and temporal rescalings for a Bernoulli walk with slope $\ga_n'$ to converge to Brownian motion with variance $2$, see \eqref{E:bernoulli-scaling}. Using this picture, we see that if we set
$\de_n$ so that
\begin{equation} \label{E:delta_def}
\frac{\de_n \be_n}{1 + \de_n \be_n} = \ga_n',
\end{equation}
then we observe that
\begin{equation}
\label{E:de-n-H}
\frac{\de_n^{x_n - y_n} }{(1 + \be_n \de_n)^{s_n-t_n}} H_{n},
\end{equation}
is the probability that a Bernoulli random walk with slope $\ga'$ is at location $x - y$ after $s-t$ steps. We call $\de_n \in (0, 1)$ the \textbf{damping parameter} for the Bernoulli random walks. It represents how much the bottom walk feels the conditioning from the walks above; the equation relating $\de_n$ to $\ga_n'$ shows that the bottom walk effectively behaves like a Bernoulli walk of odds $\de_n \be_n$ near the time $m_n$. After rescaling up by the spatial scaling $\chi_n$, \eqref{E:de-n-H} converges to the Gaussian term in $K_\scrA$ by the central limit theorem.

This analysis reveals the correct conjugation needed for $K_{n,\be}$ to converge to the Airy kernel. This conjugation could also be obtained by analyzing the second term $J_n$. The standard strategy for proving convergence of terms of this type is to search for a double critical point $w_n$ of the function $\log F_n(\ga_n, m_n; \cdot)$ and then perform a steepest descent analysis around this double critical point. The conjugation should then be given by rescaling the integrand in \eqref{E:rw-kern-2} so that it always equals $1$ at this critical point.

 The arctic curve $\ga_n$ can also be identified by double critical point considerations. For a particular value of $m_n$, two choices of $\ga_n$ result in a function with a double critical point, while others will yield two single critical points. These choices are the arctic curves for the highest and lowest walks.

A calculation reveals that the double critical point for $\log F_n(\ga_n, m_n; w)$ happens exactly at the damping parameter $w = \de_n$ \eqref{E:delta_def}. The appropriate conjugation is therefore
\begin{equation}
\label{E:F-formula}
\frac{F_n(y, t; \de_n)}{F_n(x, s; \de_n)} = \frac{\de_n^{x - y}}{(1 + \be_n \de_n)^{s-t}},
\end{equation}
which is the same conjugation as in equation \eqref{E:de-n-H}.
We can now precisely state the kernel convergence.

\begin{prop}
	\label{P:kern-cvg}
	Let $\beta_n, m_n$ be two sequences of real numbers so that statement (ii) in Theorem \ref{T:main-walk} is satisfied. With the scaling $x_n, y_n, s_n, t_n$ as in \eqref{E:xnsn}, define the conjugated and rescaled version of the random walk kernel $K_n$ by
	$$
	\tilde{K}_{n}(x, s; y, t) =  \chi_n \frac{\de_n^{x_n - y_n} }{(1 + \be_n \de_n)^{s_n-t_n}}K_n(x_n, s_n; y_n, t_n) = \chi_n \frac{F_n(y_n, t_n; \de_n)}{F_n(x_n, s_n; \de_n)}K_n(x_n, s_n; y_n, t_n)
	$$
	Then as functions from $\R^4 \to \R$, we have that $\tilde{K}_{n} = K_\scrA + o_n,$ where the error term $o_n$ is small in the following sense:
	\begin{enumerate}[label=(\roman*)]
		\item For any $s, t \in \R$ and a compact set $D \sset \R^2$, we have that
		$$
		\lim_{n \to \infty} \sup_{x, y \in D} |o_n(x, s; y, t)|= 0.
		$$
		\item For any $s, t, b \in \R$, there exist constants $c, d> 0$ such that $|o_n(x, s; y, t)| \le ce^{-d(x + y)}$ for all $n \in \N$, and $x, y \ge b$.
	\end{enumerate}
\end{prop}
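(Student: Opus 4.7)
The plan is to split the kernel as $K_n = H_n + J_n$ and analyze the conjugated, rescaled pieces $\chi_n \Phi_n H_n$ and $\chi_n \Phi_n J_n$ separately, where $\Phi_n = \de_n^{x_n - y_n}/(1 + \be_n \de_n)^{s_n - t_n}$ is the conjugation factor. For each piece I need to establish pointwise convergence together with uniform error bounds, both on compact sets of $(x, y)$ (giving (i)) and of the form $e^{-c(x+y)}$ in the range $x, y \ge b$ (giving (ii)).

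For $H_n$, the conjugation is tuned so that $-\chi_n \Phi_n H_n(x_n, s_n; y_n, t_n)$ equals $\chi_n$ times the transition probability of an unconditioned Bernoulli walk with up-probability $\ga_n'$, over $s_n - t_n$ steps, reaching height $x_n - y_n$. The scaling relation \eqref{E:bernoulli-scaling} forces $\chi_n^2/\tau_n \to \ga'(1 - \ga')/2$, so the de Moivre--Laplace local limit theorem yields
\begin{equation*}
-\chi_n \Phi_n H_n(x_n, s_n; y_n, t_n) = \frac{\indic_{s > t}}{\sqrt{4\pi(s-t)}} \exp\!\lf(-\frac{(x-y)^2}{4(s-t)}\rg) + o_n,
\end{equation*}
matching $-H_\scrA$. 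Uniform error estimates in the local CLT, combined with the Gaussian decay of both sides in $(x,y)$, supply both (i) and (ii) for the binomial piece.

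For $J_n$, I would run a steepest descent analysis around the double critical point of $w \mapsto \Psi_n(w) := \log F_n(\ga_n, m_n; w) = m_n \log(1 + \be_n w) + n \log(1 - w) - \ga_n \log w$. Using the identity $\de_n \be_n/(1 + \de_n \be_n) = \ga_n'$, a direct computation verifies $\Psi_n'(\de_n) = \Psi_n''(\de_n) = 0$, while the choice of $\chi_n$ in \eqref{E:tau-xi-bern} ensures $\Psi_n'''(\de_n)$ has order $\chi_n^{-3}$. I would deform $\Ga_w$ and $\Ga_z$ to pass through $\de_n$ in the steepest descent directions $\de_n + e^{\pm i\pi/3}\R_+$ and $\de_n + e^{\pm 2i\pi/3}\R_+$ respectively, extending globally so that $\Ga_w$ encircles $0$, $\Ga_z$ encircles $1$, and the contours stay disjoint. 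With the substitution $w = \de_n + c_n v/\chi_n$ and $z = \de_n + c_n u/\chi_n$ for an appropriate bounded constant $c_n$, the cubic term rescales to $\tfrac{1}{3} v^3 - \tfrac{1}{3} u^3$; the linear and quadratic perturbations from $(x_n, s_n), (y_n, t_n) \neq (\ga_n, m_n)$ produce the remaining terms $- vx + uy - v^2 s + u^2 t$, with the potential linear pieces in $s, t$ cancelling by the defining relation for $\de_n$; and the Jacobian combines with the explicit $\chi_n$ prefactor and the measure $dw\, dz/[w(w - z)]$ to yield $du\, dv/(u - v)$, matching $J_\scrA$.

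The main obstacle is ensuring that the steepest descent estimates hold uniformly across all parameter regimes compatible with $\chi_n \to \infty$. Since $\de_n$ can tend to $0$ or $1$ as $\be_n$ or $m_n/n$ varies, the global geometry of $\Psi_n$ and the room available for the contours away from the other pole change drastically; I would select $\Ga_w, \Ga_z$ by careful case-by-case geometric analysis so that $\mathrm{Re}(\Psi_n(w) - \Psi_n(\de_n))$ is uniformly bounded above by a negative multiple of $\min(|w - \de_n|^3, 1)$ along each contour. Once this is in place, the tails of the rescaled integrals decay exponentially in $\chi_n$, yielding (i). For (ii), the conjugation recasts the factors $w^{-x_n}, z^{-y_n}$ appearing in $J_n$ as $(\de_n/w)^{x_n}(z/\de_n)^{y_n}$, and the steepest descent directions force $|w| > \de_n$ on $\Ga_w$ and $|z| < \de_n$ on $\Ga_z$ near the critical point; extending the contours globally while preserving these sign conditions ensures that the integrand picks up an overall factor $e^{-c(x + y)}$ uniformly for $x, y \ge b$, completing the exponential tail bound.
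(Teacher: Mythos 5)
Your overall architecture matches the paper's: split $K_n = H_n + J_n$, handle $\tilde H_n$ by a quantitative local CLT, handle $\tilde J_n$ by steepest descent around the double critical point $\de_n$ of $w\mapsto \log F_n(\ga_n,m_n;w)$, and recognize that the real difficulty is making the contour estimates uniform in the drifting parameters $\be_n, m_n/n$. However, there is a concrete error that would make your steepest-descent integrals diverge: your contour directions are swapped. The cubic Taylor coefficient satisfies $n L'''(\de_n) = -2\rho_n^3 < 0$, so after the rescaling $w = \de_n + u/\rho_n$ the numerator $F(x_n,s_n;w)$ behaves like $\exp(-u^3/3)$. For $\Re(-u^3)\to -\infty$ you need $\cos(3\arg u)>0$, i.e.\ $\arg u$ near $\pm 2\pi/3$, which is precisely why $\Ga_w$ (the contour encircling $0$) must leave $\de_n$ in directions $e^{\pm 2\pi i/3}$, heading \emph{toward} the origin. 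You chose $e^{\pm i\pi/3}$ for $\Ga_w$, which gives $\cos(3\arg u) = -1 < 0$, so the integrand blows up exponentially; it is also geometrically inconsistent with $\Ga_w$ enclosing $0$ while avoiding the pole at $1$. Symmetrically, $\Ga_z$ must head toward $1$ (the paper uses $e^{\pm 4\pi i/9}$), not $e^{\pm 2\pi i/3}$. This swap propagates into your argument for part (ii): the paper's contours have $|w|<\de_n$ on $\scrC_w$ and $|z|>\de_n$ on $\scrC_z$, so the conjugated factors $(|w|/\de_n)^{\chi_n x}(\de_n/|z|)^{\chi_n y}$ both decay for $x,y\ge b$; your claimed inequalities $|w|>\de_n$, $|z|<\de_n$ would reverse the decay and destroy the exponential tail bound.

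Two further, smaller issues. First, you have both contours ``pass through $\de_n$'', but they cannot: the factor $1/(w-z)$ forces the contours to stay disjoint, and the paper deliberately offsets them by $\pm\eta$ with $\eta = \rho_n^{-1}$, then controls the cost of this offset with the $\eta^3$ terms in Propositions \ref{P:G1-bounds-w} and \ref{P:G1-bounds-z}. Second, your target bound $\Re(\Psi_n(w)-\Psi_n(\de_n)) \le -c\min(|w-\de_n|^3,1)$ is not quite the right shape: because $\de_n,\, 1-\de_n,\, \de_n+\be_n^{-1}$ may all drift, the decay rate along the contour must be expressed in terms of $|L'|$ and the distances to the three poles $0,1,-\be_n^{-1}$ (as in the bound $\int_0^r c\,(s^2\wedge 1 \wedge n/s)\,ds$ that the paper eventually extracts), and the contour must stay a definite distance $\gtrsim \de_n$ from $0$ to control $1/w$. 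These are exactly the roles of the piecewise line-and-arc constructions and the geometric angle bookkeeping in Propositions \ref{P:G1-bounds-w} and \ref{P:G1-bounds-z}, which constitute the bulk of the technical work and which your proposal leaves entirely to a ``careful case-by-case geometric analysis'' without indicating how the cases break down or why a single piecewise construction suffices.
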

\medskip

\noindent \emph{Proof of the convergence of finite dimensional distributions in Theorem \ref{T:main-walk} assuming Proposition \ref{P:kern-cvg}.} We first assume that $\chi_n \to \infty$ with $n$. Let $\scrA^n_i(t) = \chi_n^{-1}\lf(h_n - X_{n,i} \rg)(m_n + \floor{\tau_n t})$ denote the $i$th rescaled walk at time $t$, the left hand side of \eqref{E:ber-conv}. 

The random functions $\scrA^n_i$ take values in the space of functions from $[-m_n, \infty) \to \R$. For any fixed time $t$, the point $\scrA^n_i(t)$ is contained in the set $\Z_{n, t} := \chi_n^{-1}(\Z + h_n(m_n + \floor{\tau_n t}))$. For any fixed collection of times $\mathbf{t} = (t_1, \dots, t_k)$, the collection of points $(\scrA^n_i(t_j), t_j), i \in \{1, \dots, n\}, j \in \{1, \dots, k\}$ is a determinantal point process on the space 
$$
S_{n, \bf t} = (\Z_{n, t_1} \X \{t_1\}) \cup \dots \cup (\Z_{n, t_k} \X \{t_k\})
$$
with kernel $\tilde K_n$ with respect to counting measure on $S_n$ rescaled by $\chi_n^{-1}$. The rescaling of the background counting measure accounts for the factor of $\chi_n$ in the definition of $\tilde K_n$. Note that the rescaled counting measure on $S_{n, \bf t}$ converges vaguely to Lebesgue measure on $\R \X \{t_1, \dots, t_k\}$ (see Chapter 4 of \cite{kallenberg2017random} for the definition of vague convergence).

Now, to prove convergence of finite dimensional distributions, we need to show that
$$
\scrA^n_i(t_j) \Rightarrow \scrA_i(t_j)
$$
jointly over $i \in \N$ and $j \in \{1, \dots, k\}$ for any $\mathbf{t}$. For this, we just need that
\begin{enumerate}[label=(\roman*), nosep]
    \item The point measures $P^n_1, \ldots, P^n_k$, where
    $$
    P^n_j = \sum_{i=1}^n \delta_{\scrA^n_i(t_j)}
    $$
    converge jointly in distribution with respect to the vague (also called the local weak) topology to their limit $P_j$ defined similarly in terms of $\scrA$.
    \item For each $j \in \{1, \dots, k\}$, we have
    $$
    \lim_{a \to \infty} \limsup_{n \to \infty} \expt P^n_j[a, \infty) = 0.
    $$
\end{enumerate}
A simple way to see that (i) and (ii) implies the convergence of finite dimensional distributions is through the Skorokhod coupling, Theorem 3.30 in \cite{kallenberg2006foundations}. Let $X_j^n=\mathcal L^n_1(t_j)^+$. By (ii) the random variables $X_j^n$ are tight, so for every subsequence $J_0$ of $\mathbb N$ there is a further subsequence $J$ so that $X_j^n, P_j^n$ converge jointly in law. By Skorokhod coupling, this implies that there is coupling in which this convergence is almost sure. Note that we are able to apply Skorokhod coupling since the vague topology is Polish, see Theorem 4.2 in \cite{kallenberg2017random}. This implies that along $J$,  the order statistics of each $P^n_j$ converge almost surely to the order statistics of $P_j$ since convergence of $X_j^n$ prevents points from escaping to $+\infty$. That is, $\mathcal L_i^n(t_j)\to \mathcal L_i(t_j)$ for all $i \in \N$. Since $J_0$ was arbitrary, the distributional convergence follows. 



To prove (i), we will show that for any finite set of intervals $[a_i, b_i] \sset \R,$ and indices $j(i) \in \{1, \dots, k\}$, that
\begin{equation}
\label{E:prod-Pn}
\expt \left[\prod_{i=1}^m P^n_{j(i)}[a_i, b_i] \right] \to \expt \lf[\prod_{i=1}^m P_{j(i)}[a_i, b_i] \rg].
\end{equation}
Before proving \eqref{E:prod-Pn}, let us first explain why this gives (i).
Equation \eqref{E:prod-Pn} implies vague tightness of the random measures $P_j$ by Theorem 4.10 in \cite{kallenberg2017random}. Moreover, the moments of any limit point of $(P^n_1, \dots, P^n_k)$ must match the right side of \eqref{E:prod-Pn}. We must show that any limit point $Q = (Q_1, \dots, Q_k)$ is equal to $(P_1, \dots, P_k)$. Indeed, $Q$ is determined by all the joint distributions of the form
$$
Q_{\mathbf{i, a, b}} := (Q_{i(1)}[a_1, b_1], \dots, Q_{i(m)}[a_m, b_m]).
$$
The distribution of such a random vector is determined by its moments as long as
the $n$th moments $\mu_n$ of all individual coordinates satisfy $\limsup (\mu_n)^{1/n}/n < \infty$ (e.g. see Theorem 3.3.11 in \cite{durrett2019probability} for the single variable case, and Theorem 2 in \cite{kleiber2013multivariate} for the multivariate extension). In our setting, this limsup condition follows from the fact that each $P_j$ is a determinantal point process with a Hermitian kernel, and the fact the number of points in any set in such a process has exponential tails, see \cite{hough2009zeros}, Lemma 4.2.6.








Now, the left hand side of \eqref{E:prod-Pn} can be written as a finite linear combination of integrals of the form
\begin{equation}
\label{E:intensities}
\int_{\prod_{i=1}^{m'} [c_i, d_i]} \det \lf(\tilde K_n(x_k, t_{j'(k)}; x_\ell, t_{j'(\ell)}) \rg)_{k, \ell \in [1, m']} d\nu_{n, j'(1)}(x_1) \dots d\nu_{n, j'(m')}(x_{m'}).
\end{equation}
Here each of the intervals $[c_i, d_i]$ is equal to one of the intervals $[a_i, b_i]$, each $j'(\ell) \in \{1, \dots, k\}$, $m' \in \{1, \dots, m\}$, and for each $i$, the measure $\nu_{n, i}$ is simply counting measure on $\Z_{n, t_i}$ rescaled by $\chi_n^{-1}$. (See \cite{hough2009zeros}, Remark 1.2.3 for details on how to construct the linear combination; an explicit example is given there for the case when $m = 2$. Essentially, the linear combination arises from writing the moments of the process in terms of the factorial moments of the process, which can be done by inverting a particular upper triangular matrix. Factorial moments are  naturally expressed in terms of integrals of determinants as in \eqref{E:intensities}.) For each $i$, the measure $\nu_{n, i}$ converges vaguely to Lebesgue measure on $\R$ as $n \to \infty$.  Uniform-on-compact convergence of the kernels $\tilde K_n$, Proposition \ref{P:kern-cvg} (i), then implies \eqref{E:prod-Pn}.

For item (ii), note that
$$
\expt P^n_j[a, \infty) = \int_a^\infty \tilde K_n(t_j,x,t_j,x) d\mu_{n, j}(x).
$$
This is bounded uniformly in $\N$ by $ce^{-d a}$ by Proposition \ref{P:kern-cvg} (ii).

For the other direction of Theorem \ref{T:main-walk}, if $\chi_n$ does not approach $\infty$ with $n$, then there is a constant $b > 0$ and a subsequence $J \sset \N$ such that $\chi_n \le b$ for all $n \in J$. Now, $\mathcal L^n_1(0) = - \chi_n^{-1} X_{n, i}(m_n) \in \chi_n^{-1} \Z$, so deterministically along this subsequence, $\mathcal L^n_1(0) \notin (0, b^{-1})$. On the other hand, $\mathcal L_1(0)$ is a Tracy-Widom random variable, whose law is mutually absolutely continuous with respect to Lebesgue measure, so $\mathcal L^n_1(0) \not \cvgd \mathcal L_1(0)$. Therefore $\scrL^n$ cannot converge to the parabolic Airy line ensemble.
\qed

\subsection*{Proof of Proposition \ref{P:kern-cvg}}

The main part of the proof of Proposition \ref{P:kern-cvg} involves deforming the $w$ and $z$ contours for $J_n$ from \eqref{E:rw-kern-2} so that they look like the Airy contours around the double critical point $\de_n$ for $\log F_n(\ga_n, m_n; \cdot)$, and then performing a steepest descent analysis to show that the contribution to the contours away from the double critical point is negligible. 

The main difficulty in doing this is in constructing the appropriate contours. Because of the $(w-z)^{-1}$ term in formula \eqref{E:rw-kern-2} for $J_n$, we will need the contours to be sufficiently separated. When $\be_n$ and $m_n/n$ are fixed as $n \to \infty$, this is guaranteed along the true steepest ascent/descent contour for $\log F_n(\ga_n, m_n; \cdot)$, but it is more difficult to guarantee this when $m_n/n$ and $\be_n$ vary with $n$. Also, we need the function $\log F_n(x, m_n; \cdot)$ to behave well along the contours even when $x$ is much less than $\ga_n$ in order to guarantee Proposition \ref{P:kern-cvg} (ii).

Because of these difficulties, we resort to constructing the contours with an implicit geometric construction. We will also bound the relevant functions along these contours with geometric arguments. While it may be possible to proceed in a more direct and computational manner, we did not find a way to do this is in an elegant way.

The following propositions construct appropriate contours. 
Define
\begin{equation}
\label{E:L-def}
L(w)=L_{\al, \be}(w) = \log (1 - w) + \al \log(w + \be^{-1}) - \frac{(\al \be - 1)^2}{1 + \be} \log (w).
\end{equation}

Observe that $L_{m_n/n, \be_n} = n^{-1} \log F_n(\ga_n, m_n; \cdot)$. $L'$, the derivative of $L$, is a rational function whose directions of descent and ascent can be analyzed by geometric considerations. To simplify notation in this proposition and the next one, we will write
\begin{equation}
\label{E:Lde}
\de = \frac{\sqrt{\al \be} - 1}{\sqrt{\al \be} + \be}, \quad \mathand \qquad \tilde{\ga} = \frac{(\sqrt{\al \be} - 1)^2}{1 + \be}.
\end{equation}
When $\alpha = m_n/n$ and $\beta = \beta_n$ then this definition of $\de \in (0, 1)$ agrees with \eqref{E:delta_def}.
\begin{figure}
	\begin{center}
\begin{tikzpicture}[scale=2.2,
point/.style = {circle, draw=black, inner sep=0.1cm,
	color=black,fill, scale=0.5,
	node contents={}},
every label/.append style = {font=\scriptsize}]


\def\angle{60}
\def\cos{0.5}
\def\sin{0.866}
\def\R{0.866}

\draw [help lines,->] (-2, 0) -- (2,0);
\draw [help lines,->] (0, 0) -- (0,1.6);

\draw[line width=1pt,   decoration={ markings,
	mark=at position 0.5 with {\arrow[line width=1.2pt]{>}}},
postaction={decorate}] (1,0) -- (1-\cos*\R,\sin*\R);
\draw[line width=1pt,   decoration={ markings,
	mark=at position 0.5 with {\arrow[line width=1.2pt]{>}}},
postaction={decorate}] (1-\cos*\R,\sin*\R) arc (55:180:0.9401);

\draw[dashed] (1,0) arc (0:180:1);
\node at (-0.5,0)[point, label=below:$-\be^{-1}$];
\node at (0,0)[point, label=below:$0$];
\node at (1.2,0)[point, label=above:$\delta$];
\node at (1.75,0)[point, label=below:$1$];
\node at (1,0)[scale=0.5,circle,node contents={},draw=black,label=below:$\delta - \eta$];
\node at (-1,0)[scale=0.5,circle,node contents={},draw=black,label=below:$-\delta+\eta$];

\node at (2,-0.2){$x$};
\node at (-0.24,1.6) {$iy$};

\end{tikzpicture}
\end{center}
\caption{The contour $\scrC_w$ in Proposition \ref{P:G1-bounds-w} for positive times. It starts at a point $\de - \eta$ for some small $\eta$ and stays within a circle of radius $\de - \eta$ about the origin. Moreover, $\Re(L)$ descends proportional to $L'$ along the entire contour. We first follow a straight line emanating from the point $\de - \eta$ and then append a circular arc about the origin. The point at which $\scrC_w$ switches from following a straight line to a circular arc is chosen so that $\scrC_w$ always stays away from the point $-\be^{-1}$.}	\label{fig:cw}
\end{figure}

\begin{prop}
	\label{P:G1-bounds-w} Let $\de, \tilde \ga$ be as in \eqref{E:Lde}.
	There exist universal constants $c_1, c_2 > 0$ such that the following holds for all choices of parameters $\al, \beta > 0$ with $\al \be > 1$, and for every $\eta \le \de/24$. There exists $t_w \in (0, \pi \de]$ and a contour $\scrC_w:[-t_w, t_w] \to \C$ which is parametrized by arc length and has the following properties:
	\begin{enumerate}[label=(\Roman*)]
		\item $\scrC_w(t) = \close{\scrC_w(-t)}$ for all $t \in [0, t_w]$.
		\item $\scrC_w(0) = \de - \eta$, $\scrC_w(t_w) \in (-\infty, 0)$, and
		$$
		\scrC_w(t) \in \{z: \Im z > 0, \; \de/4 \le |z| \le \de - \eta\} \qquad \forall t \in (0, t_w).
		$$
		\item For $t \in [0, \de/4]$, we have that $\scrC_w(t) = \de - \eta + te^{2\pi i/3}$.
		\item The following bounds holds for all $t \in [-t_w, t_w]$:
		\begin{align*}
		\Re(L(\scrC_w(t))) &\leq \Re(L(\de)) + \frac{c_1(\al + 1 - \tilde{\ga})}{(\de + \be^{-1})\de(1-\de)}\eta^3 - \int_0^{|t|} c_2|L'(\scrC_w(s))|ds. \\
		\nonumber
		|L'(\scrC_w(t))| &\ge  \frac{c_2(\al + 1 - \tilde{\ga})t^2}{(\de+ \beta^{-1})\de(1-\de + |t|)}.
		\end{align*}
	\end{enumerate}
\end{prop}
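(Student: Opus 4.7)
The plan is to exploit the fact that $\de$ is a double critical point of $L$. Since $L'$ is a rational function with simple poles at $1$, $-\be^{-1}$, and $0$ and a double zero at $\de$, it factors as
$$
L'(w) = A \cdot \frac{(w - \de)^2}{w(w + \be^{-1})(w - 1)},
$$
where the constant $A = 1 + \al - \tilde{\ga} = (\sqrt{\al} + \sqrt{\be})^2/(1+\be) > 0$ is read off from the residue of $L'$ at infinity. From this, $L'''(\de) = 2A/[\de(\de + \be^{-1})(\de - 1)] < 0$, and Taylor expansion gives $L(w) - L(\de) = -c_0(w - \de)^3 + O((w-\de)^4)$ with $c_0 = A/[3\de(\de+\be^{-1})(1-\de)]$. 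The three steepest descent rays out of $\de$ are therefore at angles $0$ and $\pm 2\pi/3$, making property (III) the natural choice for the initial piece of the contour.

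I would build $\scrC_w$ on $[0, t_w]$ (and then reflect through the real axis using property (I)) as two concatenated arc-length-parameterized pieces:
\begin{enumerate}[label=(\alph*)]
\item a straight segment $\scrC_w(t) = \de - \eta + t e^{2\pi i/3}$ for $t \in [0, \ell]$, where $\ell$ is chosen comparable to $\de$;
\item a circular arc $\scrC_w(t) = r e^{i\phi(t)}$ of fixed radius $r = |\scrC_w(\ell)|$, traversed counterclockwise until it first meets the negative real axis at $-r$.
\end{enumerate}
On (a), the displacement $w - \de = -\eta + t e^{2\pi i/3}$ stays in the sector $\{\arg \in [2\pi/3,\pi]\}$, so $\Re[(w-\de)^3] \ge 0$ throughout; combining Taylor's theorem with the factored lower bound $|L'(w)| \ge A|w-\de|^2/[(\de+\be^{-1})\de(1-\de)]$ and integrating yields both the $\eta^3$ excess at $t=0$ and the decay term $\int_0^{|t|} c_3|L'|\,ds$. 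On (b), I would compute
$$
\frac{d}{d\phi}\Re L(re^{i\phi}) = -r\,\Im\!\lf[L'(re^{i\phi})\, e^{i\phi}\rg],
$$
and use the factored form of $L'$, together with simple modulus estimates of the four factors on a circle of radius $r \approx \de$, to check that this derivative is negative and bounded in absolute value below by $c_3|L'(re^{i\phi})|$. The same modulus estimates give $|L'| \gtrsim A(r+\de)^2/[r(r+\be^{-1})(1-\de+r)]$, which after the translation $|t| \asymp r+\de$ (valid since arc length at radius $\sim\de$ scales with the angle) is exactly the lower bound in (IV).

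The delicate point is property (II): ensuring that $|\scrC_w(t)|$ stays in the band $[c_1\de,\,\de-\eta]$ and that the contour avoids the pole at $-\be^{-1}$. For the band, taking the constants $c_1$ (via $\eta \le c_1\de$) and the segment length $\ell$ to be universally small multiples of $\de$ keeps the radius of the arc within a factor of $1 - O(c_3)$ of $\de$, so $c_1$ can be chosen as an absolute constant. Avoiding $-\be^{-1}$ is automatic when $\be^{-1} > \de$, since the disk of radius $\de-\eta$ then does so. The remaining regime $\be^{-1} < \de$ places $-\be^{-1}$ inside this disk, and then the transition length $\ell$ must be chosen as a function of the ratio $\be^{-1}/\de$ so that the arc radius $r$ is bounded away from $\be^{-1}$; the description in the figure caption indicates precisely such a case split. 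Once the contour is pinned down, property (IV) reduces to routine (if bookkeeping-heavy) estimates using the explicit factored form of $L'$, with constants that do not depend on $\al$ or $\be$.
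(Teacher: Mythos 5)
The high-level structure of your contour — a linear segment in the direction $e^{2\pi i/3}$ followed by a counterclockwise circular arc about the origin — is exactly the paper's construction, and the factorization $L'(w)=A(w-\de)^2/[w(w+\be^{-1})(w-1)]$ with $A=\al+1-\tilde\ga$ is the right starting point. But several of the specific estimates you propose are either false as stated or cannot be pushed through without reverting to the paper's angle-based argument.

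First, on the linear segment you claim that $\Re[(w-\de)^3]\ge 0$ throughout. This fails near $t=0$: there $\arg(w-\de)$ is close to $\pi$ (since $w-\de=-\eta+te^{2\pi i/3}$), so $\arg[(w-\de)^3]$ is close to $\pi$ and $\Re[(w-\de)^3]<0$. This is in fact \emph{why} the proposition allows the positive excess $\sim c_2\,\eta^3$ at $t=0$; the sign is wrong on an initial stretch, and one must show the contribution is of size $O(\eta^3)$ and that after $t\gtrsim\eta$ the descent kicks in. Second, and more seriously, deducing the decay term $\int_0^{|t|}c_3|L'|$ from a cubic Taylor expansion of $L$ is not legitimate along the whole segment: the segment length is $\sim\de$, and at $|w-\de|\sim\de$ the quartic remainder $O(|w-\de|^4 L^{(4)})$ is of the same order as the cubic term (indeed $L^{(4)}(\de)/L^{(3)}(\de)\sim\de^{-1}$). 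The paper avoids this by never Taylor-expanding $L$ over the full contour; instead it controls the \emph{angle} $\pi-\Arg(L'(\scrC_w(t)))-\Arg(\scrC_w'(t))$ directly via the four-argument formula $\Arg L'(w)=2\Arg(w-\de)-\Arg(w)-\Arg(w-1)-\Arg(w+\be^{-1})$, showing it stays bounded away from $\pm\pi/2$, which gives the pointwise directional-derivative bound $\frac{d}{dt}\Re L\le -c_3|L'|$.

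Third, on the circular arc your claim that the angular derivative of $\Re L$ is negative \emph{and bounded below in magnitude by} $c_3|L'|$ is false. At the terminal point $\phi=\pi$ one has $\Arg(w-\de)=\Arg(w-1)=\pi$, so (when $\be^{-1}>r$) $\Arg L'(w)+\phi=\pi$ exactly and $\sin(\Arg L'(w)+\phi)=0$; the descent rate vanishes. The paper proves only that $\Re L$ is \emph{nonincreasing} on the arc, via the geometric equivalence
$$
\Arg(\scrC_w(t)+\be^{-1})+2A(\scrC_w(t)-\de)\le\pi \iff |\scrC_w(t)+\be^{-1}|\le|\de+\be^{-1}|,
$$
and then recovers the $\int c_3|L'|$ term on the arc by the purely bookkeeping observation that the arc has length $t_w-t_0\lesssim t_0$, so its contribution can be absorbed into the segment's integral by shrinking $c_3$. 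Your outline does not include this absorption step and would be stuck without it. Finally, the mechanism you sketch for avoiding the pole at $-\be^{-1}$ (choosing the transition \emph{length} as a function of $\be^{-1}/\de$) is vaguer than the paper's actual device, which is to choose the transition \emph{angle} $\theta\in\{\pi/6,\pi/5\}$ and observe that one of the two fixed choices keeps the arc uniformly far from $-\be^{-1}$. So while your picture of the proof is the right one, the local-descent estimates you propose would need to be replaced by the argument-control estimates before the proposition is actually established.
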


The main consideration driving the proof is as follows: by the simple form of $L'$, we can always locate the directions in which $\Re(L)$ is decreasing at a point $w \in \C$ by looking at a particular sum of angles formed by $w$ and the points $\be^{-1}, 0, \de,$ and $1$. We can use this to create a contour $\mathcal{C}_w$ which is the union of a linear piece and a circular arc, along which the behaviour of $L'$ can be  controlled by simple geometric arguments.
Throughout the proof, all contours will be parametrized by arc length.


\begin{proof} We will only construct $\scrC_w$ for positive times and then extend it to all times by setting $\scrC_w(t) = \close{\scrC_w(-t)}$. With this choice, the bounds in point (IV) will automatically hold for negative times since $\Re (L(z)) = \Re (L(\bar z))$ and point (I) is immediate. 
	
\textbf{Step 1: Constructing $\scrC_w$.} \qquad We will define the curve  $\scrC_w$ piecewise, see Figure \ref{fig:cw} for the basic idea. Let $\eta \le \de/24$ and let $\theta \in \{\pi/6, \pi/5\}$; we will choose $\theta$ later on in the construction. Let $t_0 > 0$ be the unique time such that 
$$
\Arg(\de-\eta + e^{2\pi i/3}t_0) = \theta,
$$ 
and set $\scrC_w(t) = \de-\eta + e^{2\pi i/3}t$ for $t \in [0, t_0]$. Now for $t \ge t_0$, define $\scrC_w$ so that it that traverses the circle $\{ z : |z| = |\scrC_w(t_0)|\}$ counterclockwise around the origin. Let $t_w > t_0$ be the time when $\scrC_w$ hits the real axis. Finally, choose $\theta$ so that the quantity
\begin{equation}
\label{E:t00}
\inf_{t \in [0, t_w]} |\scrC_w(t) + \be^{-1}|
\end{equation}
is maximized. Using the sine law, it is easy to check that $t_0 \in [\de/4, \de]$ and that (II, III) hold in this construction regardless of $\theta, \eta$. Moreover, our flexibility in choosing $\theta$ guarantees that \eqref{E:t00} is bounded below by $c \de$ for a universal constant $c$.
 
\textbf{Step 2: Verifying (IV).} \qquad	We first compute
	\begin{equation}
	\label{E:L-prime}
	L'(w) = \frac{(\al + 1 - \tilde{\ga})(w - \de)^2}{(w + \beta^{-1})(w-1)w}.
	\end{equation}
	The constant factor $\al + 1 - \tilde{\ga} > 0$, so we can write
	\begin{equation}
	\label{E:G1-expand}
	\Arg(L'(w)) = 2\Arg(w - \de) - \Arg(w - 1) - \Arg(w) - \Arg(w - \be^{-1})
	\end{equation}
	Hence at a point $w$ in the upper half plane, the direction of steepest descent for $\Re(L)$ is given by
	\begin{equation}
	\label{E:L-arg-prime}
	\pi - \Arg(L'(w)) = \pi + \Arg(w - 1) + \Arg(w) + \Arg(w + \be^{-1}) - 2\Arg(w - \de).
	\end{equation}
	First, we prove the following upper bound on the angle between $\Arg(\scrC_w'(t))$ and $\pi - \Arg(L'(w))$ for all $t \in [3\eta, t_0]$: 
	\begin{equation}
	\label{E:5pi12}
	|\pi - \Arg(L'(\scrC_w(t))) - \Arg(\scrC_w'(t))| \le 5\pi/12.
	\end{equation}
	This will show that $\Re(L)$ is decreasing at a rate of at least a rate of at least $\cos(5\pi/12)|L'(\scrC_w(t))|$ for $t \in [3\eta, t_0]$.
	 Noting that $\Arg(\scrC_w(t))$ is increasing in $t$ on the interval $[0, t_0]$, we have the following inequality chain in that interval:
	\begin{equation}
	\label{E:ineq-arg-1}
	0 \le \Arg(\scrC_w(t) + \be^{-1}) \le \Arg(\scrC_w(t)) \le \theta.
	\end{equation}
	We also have that
	\begin{equation}
	\label{E:ineq-arg-2}
2\pi/3 \le \Arg(\scrC_w(t) - \de) \le \Arg(\scrC_w(t) - 1) \le \pi.
	\end{equation}
	Moreover, for $t_0 \ge t \ge 3 \eta$, the sine law gives that $\Arg(\scrC_w(t) - \de) \le 3\pi/4$. Putting this together with \eqref{E:ineq-arg-1} and \eqref{E:ineq-arg-2} and plugging the bounds into \eqref{E:L-arg-prime}, we have
	\begin{equation*}
	\pi - \Arg(L'(\scrC_w(t))) \in [\pi - 3\pi/4, 2\pi - 2(2\pi/3)+ 2\theta] \qquad \mathforall t \in [t_0/2, t_0],
	\end{equation*}
	which implies \eqref{E:5pi12}.
	
	Next, we claim that $\Re(L)$ is nonincreasing along $\scrC_w$ in the interval $[t_0, t_w)$. To see this, first observe that $\Arg(\scrC_w'(t)) = \Arg(\scrC_w(t)) + \pi/2$ for $t \in [t_0, t_w)$, so by \eqref{E:L-arg-prime},
	\begin{equation*}
	\pi - \Arg(L'(\scrC_w(t))) - \Arg(\scrC_w'(t)) =  \pi/2 + \Arg(\scrC_w(t) - 1) + \Arg(\scrC_w(t) + \be^{-1}) - 2\Arg(\scrC_w(t) - \de).
	\end{equation*}
	To show $\Re(L)$ is nonincreasing, we just need to show that the right hand side above is in the interval $[-\pi/2, \pi/2]$. Let $A(z) := \pi - \Arg(z)$ be the angle formed by the ray to a point $z$ and the negative real axis. Then we can rewrite the right hand side above as
	\begin{equation}
	\label{E:crux-ineq}
	-\pi/2  + \Arg(\scrC_w(t) + \be^{-1}) + 2A(\scrC_w(t) - \de) - A(\scrC_w(t) - 1).
	\end{equation}
	Noting that $A(\scrC_w(t) - \de) > A(\scrC_w(t) - 1)$ and that $\Arg(\scrC_w(t) + \be^{-1})$ and  $A(\scrC_w(t) - \de)$ are nonnegative, the quantity \eqref{E:crux-ineq} is always strictly bounded below by $-\pi/2$. To get an upper bound for \eqref{E:crux-ineq}, observe that
	\begin{equation}
	\label{E:be-w}
	\Arg(\scrC_w(t) + \be^{-1}) + 2A(\scrC_w(t) - \de) \le \pi \quad \text{if and only if} \quad |\scrC_w(t) + \be^{-1}| \le |\de + \be^{-1}|.
	\end{equation}
	The reason for this is purely geometric: if the right side of \eqref{E:be-w} holds, then in the triangle formed by the three points $\de, -\be^{-1},$ and $\scrC_w(t)$, the angle at $\scrC_w(t)$ will be greater than or equal to $A(\scrC_w(t) - \de)$ and vice versa.
	
	We now bound $\Arg(\scrC_w(t) + \be^{-1}) + 2A(\scrC_w(t) - \de)$ by verifying the right side of \eqref{E:be-w}. Observe that
	$$
	|\de + \be^{-1}| - |\scrC_w(t) + \be^{-1}| \ge |\de| - |\scrC_w(t)|.
	$$
	The right hand side above is bounded below by $\eta$ by (II). Therefore $\Arg(\scrC_w(t) - \be^{-1}) + 2A(\scrC_w(t) - \de) \le \pi$, and so since $A(C_w - 1) \ge 0$, we have that \eqref{E:crux-ineq} is bounded above by $\pi/2$, as desired.
	
	Next we bound the magnitude of $L'$ along $\scrC_w(t)$. The construction of the contour guarantees that
	\begin{align*}
	|\scrC_w(t) - \de| \sim \eta + t, \quad, |\scrC_w(t)| \sim \de, \quad |\scrC_w(t)-1| \sim 1 - \de + t, \quad |\scrC_w(t)+ \be^{-1}| \sim \de  + \be^{-1},
	\end{align*}
	where the $\sim$ equivalence notation means that the ratio of the two sides is bounded above and below by positive constants that are independent of all parameters. It is easiest to see these equivalences by studying Figure \ref{fig:cw}. For the final equivalence above, we have combined the observations that \eqref{E:t00} is bounded below by $c \de$ and that $|\scrC_w(t)| \sim \de$.
	Therefore along $\scrC_w$ we have that
	$$
|L'(\scrC_w(t))| \sim \frac{(\al + 1 - \tilde \ga)(\eta + t)^2}{(\de+ \beta^{-1})\de(1-\de + t)}.
	$$
	Condition (IV) then follows by combining the following facts:
\begin{itemize}[nosep]
	    \item $\Re(L(\scrC_w(t)))$ is decreasing on the interval $[3 \eta, t_0]$ at a rate of at least $\cos(5\pi/12)|L'(\scrC_w(t))|$.
	    \item $\Re(L(\scrC_w(t)))$ is nonincreasing on the interval $[t_0, t_w]$.
	    \item $3 \eta \le t_0/2$ and $t_w \sim t_0 \sim \de$.
	\end{itemize}
The second term in the first inequality in (IV) comes from the possibility that $\Re(L(\scrC_w(t)))$ is increasing for $t \in [0, 3\eta]$ and the final term in that inequality comes from the good rate of decrease of $\Re(L(\scrC_w(t)))$ in the interval $[3 \eta, t_0]$, which is comparable in size to the entire interval $[0, t_w]$.
\end{proof}

We now prove an analogous proposition for the $z$-contour.

\begin{figure}
\begin{center}
	\begin{tikzpicture}[scale=1.9,
	point/.style = {circle, draw=black, inner sep=0.1cm,
		color=black,fill, scale=0.5,
		node contents={}},
	every label/.append style = {font=\scriptsize}]
	
	
	\def\angle{80}
	\def\cos{0.1743}
	\def\sin{0.984}
	\def\R{0.866}
	\def\coss{-0.1743}
	\def\sinn{0.984}
	\def\RR{0.866}

	\draw [help lines,->] (-2, 0) -- (3,0);
	\draw [help lines,->] (0, 0) -- (0, 2);
	
	\draw[line width=1pt,   decoration={ markings,
		mark=at position 0.5 with {\arrow[line width=1.2pt]{>}}},
	postaction={decorate}] (1.4,0) -- (1.4+\cos*\R,\sin*\R);
	
	\draw[line width=1pt,   decoration={ markings,
		mark=at position 0.5 with {\arrow[line width=1.2pt]{>}}},
	postaction={decorate}] (1.4+\cos*\R,\sin*\R) -- (1.4+\cos*\R+\coss*\RR,\sin*\RR + \sinn*\RR);

	\draw[line width=1pt,   decoration={ markings,
		mark=at position 0.5 with {\arrow[line width=1.2pt]{>}}},
	postaction={decorate}]  (1.4+\cos*\R,\sin*\R) arc (193-\angle:4:1);

	\draw[dashed] (1.4,0) arc (0:180:1.4);
	
	\node at (-0.85,0)[point, label=above:$-\be^{-1}$];
	\node at (0.1,0)[point, label=above:$0$];
	\node at (1.2,0)[point, label=above:$\delta$];
	\node at (1.4,0)[scale=0.5,circle,node contents={},draw=black,label=below:$\delta + \eta$];
	\node at (-1.4,0)[scale=0.5,circle,node contents={},draw=black,label=below:$-\delta-\eta$];
	\node at (2,0)[point, label=above:$1$];
	
	\node at (3,-0.2){$x$};
	\node at (-0.24,2) {$iy$};
	
	\end{tikzpicture}
\end{center}
\caption{A sketch of possibilities for the contour $\scrC_z$ in Proposition \ref{P:G1-bounds-z}. The contour starts $\de + \eta$ for some small $\eta$, stays outside of a circle of radius $\de + \eta$ about the origin, and $\Re(L)$ ascends proportionally to  $L'$ along the entire contour. It is a piecewise construction whereby that first follows a straight line from the point $\de - \eta$. If the directional derivative of $\Re(L)$ becomes too small at some point, then we turn either left along another straight line, or right along a circle centered at $1$. One of these choices guarantees that $\Re(L)$ ascends at a fast enough rate. If $\scrC_z$ turns to the right, then $t_z < \infty$; otherwise, $t_z = \infty$.}
\label{fig:cz}
\end{figure}

\begin{prop}
	\label{P:G1-bounds-z} Let $\de, \tilde \ga$ be as in \eqref{E:Lde}.
	There exist universal constants $c_1, c_2 > 0$ such that the following holds for all choices of parameters $\al$ and $\beta$ with $\al \be > 1$, and for every $\eta \le [\de \wedge (1-\de)]/20$. There exists $t_z \in (0, \infty]$ and a contour $\scrC_z:[-t_z, t_z] \to \C$ which is parametrized by arc length and has the following properties:
	\begin{enumerate}[label=(\Roman*)]
		\item $\scrC_z(t) = \close{\scrC_z(-t)}$.
		\item $\scrC_z(0) = \de + \eta$, $\scrC_z(t_z) \in (1, \infty)$ when $t_z < \infty$, and
		$$
		\scrC_z(t) \in \{z: \Im z > 0, |z| > \max(\de + \eta, c_2 t)\}, \qquad \forall t \in (0,t_z).
		$$
		\item For $t \in [0, (\de \wedge (1-\de))/6]$, we have that $\scrC_z(t) = \de + \eta + te^{4\pi i/9}$.
		\item The following bounds holds for all $t \in [-t_z, t_z]$:
		\begin{align}
		\label{E:Z-G-bd-prop-deriv}
		\Re(L(\scrC_z(t))) &\geq \Re(L(\de)) - \frac{c_1(\al + 1 - \tilde \ga)}{(\de + \be^{-1})\de(1-\de)}\eta^3 + \int_0^{|t|} c_2|L'(\scrC_z(t))|dt. \\
		\nonumber
		|L'(\scrC_z(t))| &\ge  \frac{c_2(\al + 1 - \tilde \ga)t^2}{(\de+ \beta^{-1} + |t|)(\de + |t|)(1-\de + |t|)}.
		\end{align}
	\end{enumerate}
\end{prop}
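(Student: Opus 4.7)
The proof will mirror the strategy used for $\scrC_w$ in Proposition \ref{P:G1-bounds-w}, only now we seek a contour along which $\Re(L)$ ascends instead of descends. As before, we build $\scrC_z$ on $[0, t_z]$ and extend by Schwarz reflection $\scrC_z(-t) = \close{\scrC_z(t)}$. The fundamental tool is again the factorization
\[ \Arg(L'(w)) = 2\Arg(w - \de) - \Arg(w - 1) - \Arg(w) - \Arg(w + \be^{-1}), \]
together with the elementary fact that $\Re(L)$ ascends along $\scrC_z$ at a point where $\Arg(\scrC_z'(t)) + \Arg(L'(\scrC_z(t)))$ lies in $(-\pi/2, \pi/2)$ modulo $2\pi$, and the rate of ascent is $|L'(\scrC_z(t))| \cos$ of that angle.

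I start with the straight segment $\scrC_z(t) = \de + \eta + t e^{4\pi i/9}$ demanded by (III). Near $\de$ the factor $(w-\de)^2$ dominates $L'$ and $\Arg(\scrC_z(t) - \de)$ is close to $4\pi/9$, so essentially the same geometric estimate that controls $\scrC_w$ shows that the angle between the direction of motion $e^{4 \pi i/9}$ and steepest ascent stays bounded strictly inside $(-\pi/2, \pi/2)$, and hence $\Re(L)$ rises at rate at least $c_3 |L'(\scrC_z(t))|$. I prolong the linear segment up to a first time $t_0 \leq c_2(\de \wedge (1-\de))$ at which this uniform ascent estimate is about to break.

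At $t_0$ I branch into one of two pieces exactly as sketched in the figure: (A) a second straight ray turning further to the left, which continues to $\infty$ and gives $t_z = \infty$; or (B) a clockwise circular arc centered at $1$, which eventually meets the positive real axis at a finite time $t_z$. Ascent along option (A) reduces to keeping the quantity $\Arg(\scrC_z(t) + \be^{-1}) + \Arg(\scrC_z(t)) + \Arg(\scrC_z(t) - 1) - 2\Arg(\scrC_z(t) - \de)$ inside $(-\pi/2, \pi/2)$, which is valid when the three ``outer'' arguments are close to one another and $\Arg(\scrC_z(t) - \de)$ has not grown past a controlled threshold. Along option (B) one has $\Arg(\scrC_z'(t)) = \Arg(\scrC_z(t) - 1) + \pi/2$, and the ascent condition collapses to a geometric inequality about the triangle with vertices $-\be^{-1}$, $\de$, $\scrC_z(t)$ entirely analogous to \eqref{E:be-w}, now with the radius inequality reversed because $|\scrC_z(t) + \be^{-1}| \ge |\de + \be^{-1}|$. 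In every regime of $(\de, \be)$ at least one of (A), (B) is valid, and I pick whichever additionally keeps the three distances $|\scrC_z(t) + \be^{-1}|$, $|\scrC_z(t)|$, $|\scrC_z(t) - 1|$ bounded below by universal multiples of $\de + \be^{-1}$, $\de + |t|$, and $1 - \de + |t|$ respectively. This freedom in the branching is also what yields the outer-radius estimate $|\scrC_z(t)| \ge c_3 t$ in condition (II).

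Once the contour is assembled, the quantitative bounds in (IV) follow by the same bookkeeping as in Proposition \ref{P:G1-bounds-w}. The formula \eqref{E:L-prime} combined with the distance bounds above gives both the stated lower bound on $|L'|$ in terms of $|t|$ and the integrated ascent bound on $\Re(L(\scrC_z(t)))$, with the cubic error term $\eta^3/[(\de + \be^{-1})\de(1-\de)]$ contributed by the short initial piece $[0, c_2 \eta]$ on which we do not yet have the clean $|L'|$ lower bound. I expect the main obstacle to lie in the branching step: one must show case by case, uniformly over $\al, \be$ with $\al \be > 1$ and in particular down to the degenerate regimes $\de \to 0$ and $\de \to 1$, that the option chosen at $t_0$ continues to enforce ascent all the way out to $t_z$ (including the possibility $t_z = \infty$), and that the chosen arc is separated from both $0$ and $1$ by a universal fraction of $\de$ and $1 - \de$. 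This geometric case analysis, not any analytic estimate, is where the proof requires the most care.
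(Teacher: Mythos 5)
Your proposal matches the paper's construction: a linear segment $\de+\eta+te^{4\pi i/9}$ up to a stopping time $t_0$, then a branch into either a second ray heading to infinity (so $t_z=\infty$) or a clockwise arc about $1$ meeting the positive real axis at a finite $t_z$, with the ascent rate $c_3|L'|$ maintained along each piece and the quantitative bounds assembled as in the $\scrC_w$ proof. The main point worth sharpening is your treatment of the branch: it is not a free choice made for convenience, but is dictated by which endpoint of the angular window $(-4\pi/9,4\pi/9)$ the quantity $-\Arg(L'(\scrC_z))-\Arg(\scrC_z')$ hits at $t_0$ (the clockwise arc about $1$ when it hits $-4\pi/9$, the second ray with argument $5\pi/9$ when it hits $+4\pi/9$), and one must then verify in each of these two forced cases that the ascent condition is preserved out to $t_z$. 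Two smaller slips: along the clockwise arc $\Arg(\scrC_z'(t)) = \Arg(\scrC_z(t)-1)-\pi/2$, not $+\pi/2$; and the paper does not handle the arc case by a triangle argument analogous to \eqref{E:be-w}, but by the direct chain of angle inequalities \eqref{E:standard-ineqs-z} together with the bound $\Arg(\scrC_z(t)-\de)<4\pi/9$.
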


For constructing the $z$-contour, our goal is to have the contour follow a direction of ascent for $\Re(L)$, rather than a direction of descent. We  do this by ensuring that $-\Arg(L'(\scrC_z(t)))$ and $\Arg(\scrC_z'(t))$ are close. Throughout the proof, all contours are parametrized by length. We encourage the reader to look at Figure \ref{fig:cz} before reading the proof to aid in the understanding of the contour construction.


\begin{proof}
	We will only construct $\scrC_z$ for positive $t$, and then extend by the formula $\scrC_z(t) = \close{\scrC_z(-t)}$. With this choice, the bounds in point (IV) will automatically hold for negative times since $\Re (L(z)) = \Re (L(\bar z))$ and point (I) is immediate.

	\textbf{Step 1: The first segment of $\scrC_z$:} \qquad Let $\eta \le [\de \wedge (1-\de)]/20$. Define $\scrC_z^*(t) = \de + \eta + te^{4\pi i/9}$. The true contour $\scrC_z$ will equal $\scrC_z^*$ for small $t$, to be made precise as follows. 
	Define
	\begin{equation}
	\label{E:arg-z-cont}
	t_0 = \inf \lf\{t \ge 6 \eta :-\Arg(L'(\scrC_z^*(t))- \Arg( \bar \scrC'_z(t))\notin (-4\pi/9, 4\pi/9) \rg\},
	\end{equation}
	and set $\scrC_z(t) = \scrC_z^*(t)$ for $t \le t_0$. We first claim that $t_0 > 6 \eta$. Indeed, by the sine law we have
	\begin{equation}
	\label{E:C1-bd}
	\Arg(\scrC_z^*(t) - \de) \in [7\pi  /18, 8\pi /18] \qquad \text{ for } t \ge 6 \eta 
	\end{equation}
	and by the sine and cosine laws and the upper bound on $\eta$ we have
	\begin{equation*}
	\Arg(\scrC_z^*(6 \eta) - 1) \in [8\pi/9, \pi], \qquad \Arg(\scrC_z^*(6 \eta)), \Arg(\scrC_z^*(6 \eta) - \be^{-1})) \in [0, \pi/9]. 
	\end{equation*}
	Combining these facts with equation \eqref{E:G1-expand} implies that $-\Arg(L'(\scrC_z^*(t)) \in (0, 8\pi/9)$, and hence $t_0 > 6 \eta$. Note that we may have $t_0 = \infty$. 
	
	Now, the definition of $t_0$ gives that
	\begin{equation}
	\label{E:L-low}
	\frac{d}{dt} \Re(L(\scrC_z(t)) \ge \cos(4\pi/9) |L'(\scrC_z(t))|, \qquad \text{for } t \in [6 \eta, t_0].
	\end{equation}
	Also, along the contour $\scrC_z$, by the formula \eqref{E:L-prime} and basic geometric considerations we have the estimate
	\begin{equation}
	\label{E:G1-zestimate}	|L'(\scrC_z(t))| \sim \frac{(\al + 1 - \tilde \ga)(t+ \eta)^2}{(\de + \be^{-1} + t)(\de + t)(1 - \de + t)}.
	\end{equation}
	This estimate combined with \eqref{E:L-low} yields conditions (III) and (IV) in the proposition for $t < t_0$. Since conditions (I) and (II) are also satisfied, this completes the proof of the proposition when $t_0 = \infty$.
	
	\textbf{Step 2: Extending the contour for $t > t_0$ when $t_0 < \infty$.} Since $t_0 \ne 6 \eta$, by continuity we must have that $-\Arg(L'(\scrC_z(t_0))- \Arg(\scrC_z'(t_0)) = \pm 4\pi/9$, or in other words $-\Arg(L'(\scrC_z(t_0)) \in \{0, 8 \pi/9\}$.
	
	\noindent \textbf{Case 1: $
	-\Arg(L'(\scrC_z(t_0)) = 0.
	$} 
\noindent For $t > t_0$, define $\scrC_z$ so that it traverses the circle $\{z : |z - 1| = |\scrC_z(t_0)|\}$ clockwise. Let $t_z$ be the time when $\scrC_z$ hits the real axis. 
	
	Expanding out $\Arg(L'(\scrC_z(t_0))$ using equation \eqref{E:G1-expand} and the bounds in \eqref{E:C1-bd}, we get that
	\begin{equation}
	\Arg(\scrC_z(t_0)) + \Arg(\scrC_z(t_0) + \be^{-1}) + \Arg(\scrC_z(t_0) -1) \in [7\pi/9, 8\pi/9].
	\end{equation}
	In particular, this implies that
	\begin{equation}
	\label{E:1-angle}
	0 \le\Arg(\scrC_z(t_0) -1) \le 8\pi/9.
	\end{equation}
	Now, using \eqref{E:1-angle}, the fact that $1 - (\de + \eta) \ge (1- \de)/2$, and the sine law, we have $t_0 \ge (1-\de)/6$, giving (III).
	
	Next, using \eqref{E:1-angle} again we have that
	$$
	\Arg(\scrC_z'(t)) = -\pi/2 + \Arg(\scrC_z(t) - 1) \in [-\pi/2, 7 \pi/18]
	$$
	for $t \in [t_0, t_z)$. Using this, the fact that $\Arg(\scrC_z(t_0)-\de - \eta) = 4\pi/9$, and the fact that $\scrC_z(t)$ remains in the upper half plane for $t \in [t_0, t_z)$ implies that
	\begin{equation}
	\label{E:C-2g-bd}
	0 \le \Arg(\scrC_z(t)-\de - \eta) \le 4\pi/9, \qquad t \in [0, t_z],
	\end{equation}
	and so $|\scrC_z(t)| \ge \de + \eta$ for all $t$. Finally, $|\scrC_z(t)|/t$ is bounded below by an absolute positive constant by construction, yielding (II).
	
To prove (IV), we first show that $\Re(L)$ is increasing along $\scrC_z$ whenever $t \in [t_0, t_z)$. The difference between the steepest ascent direction for $L$ and the direction of $\scrC_z$ is given by
	$$
	-\Arg(L'(\scrC_z(t)) - \Arg(\scrC_z'(t)) = \Arg(\scrC_z(t)) + \Arg(\scrC_z(t) + \be^{-1}) - 2 \Arg(\scrC_z(t)-\de) + \pi/2.
	$$
	Here we have used \eqref{E:G1-expand} and the fact that $\Arg(\scrC_z'(t)) = \Arg(\scrC_z(t) - 1) - \pi/2$. To bound the right hand side above, we use the chain of inequalities
	\begin{equation}
	\label{E:standard-ineqs-z}
	0 < \Arg(\scrC_z(t) + \be^{-1}) < \Arg(\scrC_z(t)) < 	\Arg(\scrC_z(t)-\de) < \Arg(\scrC_z(t)-\de - \eta) 
	\end{equation}
	for $t \in [t_0, t_z)$ along with the bound \eqref{E:C-2g-bd} to get that
	$$
	-\Arg(L'(\scrC_z(t)) - \Arg(C_z'(t))\in (-7\pi/18, \pi/2),
	$$
	so $\Re (L')$ is decreasing along $\scrC_z$ for $t \in [t_0, t_z)$. 
	
	Now we put everything together to extend (IV) to all $t \in [t_0, t_z]$. First, we have
	\begin{equation}
	\label{E:tzt0}
	t_z \le  10 t_0 \le 20(t_0 - 6 \eta),
	\end{equation}
	where the first inequality follows from \eqref{E:1-angle}, and the second inequality follows from the upper bound on $\eta$ and the lower bound on $t_0$ in (III). Next, by basic geometric considerations $|L'(C_z(t))| \sim |L'(C_z(t_0))|$ for all $t \in [t_0, t_z]$. Since $t_z \le 10 t_0$, \eqref{E:G1-zestimate} extends to all $t \in [0, t_z]$ by possibly decreasing $c_2$, giving the second bound in (IV). 
	
	To extend the first bound in (IV) to all $t \in [t_0, t_z]$, we use that $t_z \sim t_0 - 6 \eta$, and that $|L'(C_z(t))| \sim |L'(C_z(t_0))|$ and  $\Re (L(\scrC_z(t))$ is increasing for $t \in [t_0, t_z]$.

	\noindent {\bf Case 2:}	$
	-\Arg(L'(\scrC_z(t_0))= 8\pi/9.
	$
	In this case, we will finish the contour by defining  $t_z = \infty$ and setting
	$$
	\scrC_z(t) = \scrC_z(t_0) + (t-t_0)e^{5\pi i/9}
	$$
	on the interval $[t_0, \infty)$. 
	
	\noindent Expanding out $\Arg(L'(\scrC_z(t_0))$ using equation \eqref{E:G1-expand} and the bounds in \eqref{E:C1-bd}, we get that
	\begin{equation*}		
	\Arg(\scrC_z(t_0)) + \Arg(\scrC_z(t_0) -1) + \Arg(\scrC_z(t_0) + \be^{-1}) \in [15 \pi /9, 16\pi/9].
	\end{equation*}
	Since $\Arg(\scrC(t_0) -1) \in [0, \pi]$, this gives that
	\begin{equation}
	\label{E:2pi6theta}
	\Arg(\scrC_z(t_0)) + \Arg(\scrC_z(t_0) + \be^{-1}) \ge 2\pi/3.
	\end{equation}
	Since $\Arg(\scrC_z(t_0)) > \Arg(\scrC_z(t_0) + \be^{-1})$, we have $\Arg(\scrC_z(t_0)) > \pi/3$. This, along with the construction of the contour implies that $|\scrC_z(t)| \ge \de + \eta$ everywhere. Also, $|\scrC_z(t)|/t$ is bounded below by an absolute constant, yielding (II).	Moreover, since $\Arg(\scrC_z(t_0)) > \pi/3$ the sine law implies that $t_0 > \de$, giving (III).

	The proof of (IV) for $t > t_0$ will be similar to the $t_0 = \infty$ case. Observe that \eqref{E:G1-zestimate} still holds in the present setting so we just need to establish \eqref{E:L-low} for $t > t_0$, or equivalently that
	\begin{equation}
	\label{E:ArgArg}
-4\pi/9 \le - \Arg(L'(\scrC(t))) - \Arg(\scrC_z'(t)) \le 4\pi/9
	\end{equation}
	For the upper bound in \eqref{E:ArgArg}, for $t \ge t_0$ we have that
	\begin{align*}
	 - \Arg&(L'(\scrC(t))) - \Arg(\scrC_z'(t)) \\
	=& \Arg(\scrC_z(t)) + \Arg(\scrC_z(t) + \be^{-1}) + \Arg(\scrC_z(t) -1) - 2\Arg(\scrC_z(t) - \de) - \frac{5\pi}{9} \\
	\le&  \Arg(\scrC_z(t) -1) - \frac{5\pi}{9} \\
	\le& 4\pi/9.
	\end{align*}
	The first inequality above follows from the bounds in \eqref{E:standard-ineqs-z}, which also hold in this case. For the lower bound, note that
	\begin{itemize}[nosep]
		\item $\Arg(\scrC_z(t) -1) \ge \Arg(\scrC_z(t) - \de)$,
	    \item 	$\Arg(\scrC_z(t) - \de) \le 5\pi/9$ for all $t$,
	    \item  $\Arg(\scrC_z(t))+ \Arg(\scrC_z(t) + \be^{-1})$ is an increasing function of $t$, and is hence always bounded below by $2\pi/3$ by \eqref{E:2pi6theta}.
	\end{itemize}
	Combining these bounds, we get that
		\begin{align*}
	- \Arg&(L'(\scrC(t))) - \Arg(\scrC_z'(t)) \\
	=& \Arg(\scrC_z(t)) + \Arg(\scrC_z(t) + \be^{-1}) + \Arg(\scrC_z(t) -1) - 2\Arg(\scrC_z(t) - \de) - \frac{5\pi}{9} \\
	\ge&  \Arg(\scrC_z(t)) + \Arg(\scrC_z(t) - \be^{-1}) - \Arg(\scrC_z(t) - \de) - \frac{5\pi}{9} \\
	\ge& -4\pi/9. \qquad \qquad \qquad \qquad \qquad \qquad \qquad \qquad \qedhere
	\end{align*}
\end{proof}

We are now almost ready to prove Proposition \ref{P:kern-cvg}. 
 Recall the definitions of the scalings $x_n, s_n$ from the beginning of Section \ref{S:FDD}. We use the decomposition
\begin{equation}
\label{E:F-be-n}
\log F_n(x_n, s_n; w) = s_n \log \be_n + nL(w) + \tau_n s L_t(w) + \chi_n x L_x(w),
\end{equation}
where $L = L_{m_n/n, \be_n}$ is as in \eqref{E:L-def}, and
\begin{align}
\label{E:Lt_Lx_defn}
	L_t(w) &:= \log (\be_n^{-1} + w) - \ga_n'(m_n) \log w \qquad \mathand \qquad L_x(w)  := \log w.
\end{align}
There is an implicit dependence on $n$ in $L_t$ that will be suppressed throughout the proof. After deforming the contours for $J_n$, all the weight will come from a region of size $O(\rho_n^{-1})$  around the double critical point $\de_n$, where
\begin{equation}
\label{E:rho-n}
\rho_n := \chi_n/\de_n.
\end{equation}

Near $\de_n$, we will pick up the first non-trivial Taylor expansion term in each of $L, L_t,$ and $L_x$: these become the $u^3, u^2$, and $u$ terms respectively in the limiting integrand $G$, see \eqref{E:G-formula}. This will be made precise in the forthcoming proof of Proposition \ref{P:kern-cvg}.

 In order to guarantee that the error terms in our asymptotics drop away, we need to show that after rescaling the complex plane by $\rho^{-1}_n$, the distance from the critical point $\de$ to each of the distinguished points $0, 1,$ and $-\be^{-1}$ goes to $\infty$ with $n$.

\begin{lemma}
	\label{L:necess-scale}
	Let $m_n, \be_n$ be sequences with $m_n \be_n > n$ such that the spatial scaling parameter $\chi_n \to \infty$ as $n \to \infty$. Then as $n \to \infty$, we have that
	\begin{equation}
	\label{E:scalings}
	\de_n \rho_n \to \infty, \qquad (\de_n + \be_n^{-1})\rho_n \to \infty, \qquad \mathand \quad (1 - \de_n)\rho_n \to \infty.
	\end{equation}
\end{lemma}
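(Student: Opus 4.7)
The plan is to rewrite each of the three quantities explicitly using $\rho_n = \chi_n/\de_n$ and then show divergence follows from the hypothesis $\chi_n \to \infty$ together with the implicit $n \to \infty$ from the sequence indexing.

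The first two assertions are essentially free. By the definition of $\rho_n$, we have $\de_n \rho_n = \chi_n$, which is the hypothesis. Since $\be_n^{-1} > 0$, it follows that $(\de_n + \be_n^{-1})\rho_n \ge \de_n \rho_n = \chi_n \to \infty$ as well.

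The content is in the third assertion. My plan is to introduce the parameter $u_n := \sqrt{n/(m_n \be_n)} \in (0,1)$, which is well defined because $m_n \be_n > n$. A direct computation from \eqref{E:gamma-def} yields
\begin{equation*}
\ga' = \frac{\be_n(1-u_n)}{1+\be_n}, \qquad 1-\ga' = \frac{1+u_n\be_n}{1+\be_n}, \qquad \ga'' = \frac{u_n^3 \be_n^2}{2n(1+\be_n)},
\end{equation*}
so that \eqref{E:delta_def} gives $\de_n = (1-u_n)/(1+u_n\be_n)$, and consequently $(1-\de_n)/\de_n = u_n(1+\be_n)/(1-u_n)$. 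Substituting into the formula for $\chi_n$ in \eqref{E:tau-xi-bern} and simplifying produces
\begin{equation*}
\chi_n \;=\; \frac{n^{2/3}(1-u_n)^{2/3}(1+u_n\be_n)^{2/3}}{u_n(1+\be_n)}.
\end{equation*}
When this is multiplied by $(1-\de_n)/\de_n$, the factors $u_n(1+\be_n)$ cancel and one power of $(1-u_n)^{2/3}$ is absorbed into the denominator, leaving
\begin{equation*}
(1-\de_n)\rho_n \;=\; \frac{n^{2/3}(1+u_n\be_n)^{2/3}}{(1-u_n)^{1/3}} \;\geq\; n^{2/3},
\end{equation*}
where the inequality uses $u_n \in (0,1)$ and $\be_n > 0$. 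Since $n \to \infty$, this diverges.

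The main obstacle is purely algebraic bookkeeping. The formula for $\chi_n^3$ combines $\ga'$, $1-\ga'$, and $\ga''$, each of which contains square roots in $m_n, n, \be_n$. The substitution $u_n = \sqrt{n/(m_n \be_n)}$ is the one simplifying move that makes all the cancellations transparent: it turns the arctic curve and its derivatives into rational expressions in $u_n$ and $\be_n$, after which the calculation of $\chi_n(1-\de_n)/\de_n$ reduces to combining powers.
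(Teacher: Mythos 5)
Your proof is correct and takes essentially the same approach as the paper: both compute $\chi_n$ explicitly from \eqref{E:tau-xi-bern}, dispatch the first two limits as immediate consequences of $\de_n\rho_n = \chi_n$, and then reduce the third to an algebraic lower bound. The only difference is parametrization—you use $u_n = \sqrt{n/(m_n\be_n)}$ while the paper uses $\al = m_n/n$—and your version is arguably cleaner, yielding the bound $[(1-\de_n)\rho_n]^3 \ge n^2$, which matches a careful reading of the paper's algebra (their displayed bound $\ge n$ reflects a dropped factor of $n$ but is of course still sufficient).
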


\begin{proof}
	Since $\de_n \rho_n = \chi_n$, and since $\be_n > 0$, the first two convergences are immediate. It remains to prove the third convergence. For readability of the formulas, we will write $\al = m_n/n, \be = \be_n$ and write $\ga', \ga''$ for the derivatives of the arctic curve evaluated at $m_n$. We can expand out the spatial scaling parameter $\chi_n$ using the formula \eqref{E:tau-xi-bern} in Theorem \ref{T:main-walk} as:
\begin{equation}
\label{E:zenx}
\chi_n = \lf( \frac{[\ga'(1 - \ga')]^2}{2 \ga''}\rg)^{1/3} =
\lf(\frac{n\sqrt{\be}(\sqrt{\al} + \sqrt{\be})^2 (\sqrt{\al\be} - 1)^2}{\sqrt{\al} (1 + \be)^3}\rg)^{1/3}.
\end{equation}
Using \eqref{E:Lde}, we have
\begin{equation}
[(1- \de_n)\rho_n]^3 = \lf(\frac{\chi_n (1 - \de_n)}{\de_n}\rg)^3 = \frac{n\sqrt{\be}(\sqrt{\al} + \sqrt{\be})^2}{\sqrt{\al} (\sqrt{\al \be} -1)}\ge n,
\end{equation}
and so the left hand side approaches infinity with $n$.
\end{proof}

The intuition behind the three poles at $0, 1,$ and $-\be^{-1}$ is that after the appropriate rescaling, the distance from the critical point to each pole stands as a proxy for a particular scaling parameter going to $\infty$. The pole at $1$ represents the number of lines and comes from the $n$ term in the definition of $F_n$, the pole at $\be^{-1}$ represents the time scaling and comes from the $t$ term, and the pole at $0$ represents the spatial scaling and comes with the $x$ term. In the case of the pole at $0$, this is a very precise statement, since the distance to that pole after rescaling is simply $\chi_n$.

\begin{proof}[Proof of Proposition \ref{P:kern-cvg}]
	We can write $\tilde K_n = \tilde H_n + \tilde J_n$, where $\tilde H_n$ and $\tilde J_n$ are rescaled and coordinate-changed versions of $H_n$ and $J_n$. Showing that $\tilde H_n$ converges to the corresponding term in $K_\scrA$ pointwise follows from the central limit theorem for Bernoulli walks, see the discussion before Proposition \ref{P:kern-cvg}. Showing this with the desired error bound and follows from a quantitative version of the central limit theorem for Bernoulli walks (e.g. an application of Stirling's formula \cite{durrett2019probability}, Section 3.1). We move on to deal with the more complicated term $J_n$.
	
	For ease of notation during the rest of the proof, we will omit from our notation the dependence of parameters on $n$, e.g. $\de = \de_n, \be = \be_n$.
	
	\noindent \textbf{Step 1: Comparing $L, L_t$, and $L_x$.} \qquad In Propositions \ref{P:G1-bounds-w} and \ref{P:G1-bounds-z} we defined contours that behave well with the function $L$. However, as can be seen from \eqref{E:F-be-n}, our integrand also has $L_t$ and $L_x$ terms. Our first aim is to show that these terms are negligible away from the critical point when compared with the $L$ term, justifying our choice of contours.
	
	 Recalling the computation of  $L'$ and computing the derivatives of $L_t$ and $L_x$ (recall their definition from \eqref{E:Lt_Lx_defn}) gives
	\begin{align}
	\label{E:L-ga}
	nL'(w) &= \frac{(m_n + n - \ga)(w - \de)^2}{w(w + \be^{-1})(w-1)}, \qquad
	L_t'(w) = \frac{(1-\ga')(w-\de)}{w(w + \be^{-1})}, \quad \mathand \quad
	L_x'(w) = \frac{1}w.
	\end{align}
	Here $\ga$ and $\ga'$ are the values of the arctic curve at the point $m_n$.
	Computations using the above formulas, and the definitions \eqref{E:tau-xi-bern}, \eqref{E:Lt_Lx_defn}, and \eqref{E:rho-n}, show that the scaling parameters satisfy
	\begin{equation}
	\label{E:scale-taylor}
	n = -\frac{2 \rho_n^3}{L'''(\de)}, \qquad \tau_n = \frac{2\rho_n^2}{L_t''(\de)} \qquad \mathand \quad \chi_n = \frac{\rho_n}{L_x'(\de)}.
	\end{equation}
	These equations reveal that the first three terms of the Taylor series expansion of $F_{n}$ locally looks like $G$ around the double critical point $\de$ in the right scaling regime. Using these expressions in conjunction with the relationships in \eqref{E:L-ga}, we get that
	\begin{align}
	\lf|\frac{nL'(w)}{\tau_n L_t'(w)}\rg| &=  \lf|\frac{\rho_n (1-\de)(w - \de)}{2(w - 1)}\rg| \ge\frac{\rho_n |1-\de||w - \de|}{2(|w - \de| +|1-\de|)} \\
	\lf|\frac{\tau_n L_t'(w)}{\chi_n L_x'(w)}\rg| &=  \lf|\frac{\rho_n (\de + \be^{-1})(w - \de)}{w + \be^{-1}}\rg| \ge\frac{\rho_n |\de + \be^{-1}||w - \de|}{|w - \de| +|\de + \be^{-1}|}.
	\end{align}
	Both of the right hand sides above are increasing in $|w - \de|$. Moreover, by Lemma \ref{L:necess-scale} we have that $\rho_n^{-1} |1 - \de|, \rho_n^{-1}|\de + \be^{-1}| \to 0$ with $n$. In particular, this implies that for any fixed $\Omega > 0$ and $|w - \de| \ge \rho_n^{-1} \Omega$, and for all large enough $n$ we have
	\begin{equation}
	\label{E:small-deriv-bds}
	\lf|\frac{nL'(w)}{\tau_n L_t'(w)}\rg| \ge \Omega/3, \qquad \mathand \qquad \lf|\frac{\tau_n L_t'(w)}{\chi_n L_x'(w)}\rg| \ge \Omega/3.
	\end{equation}
	

\noindent \textbf{Step 2: Deforming the contours.} \qquad	
 First deform the contours for $\tilde J_{n}$ to the contours from Propositions \ref{P:G1-bounds-w} and \ref{P:G1-bounds-z} so that $\Ga_w$ becomes $\scrC_w$ and $\Ga_z$ becomes $\scrC^-_z$ with the parameter $\eta$ in that lemma equal to $\rho_n^{-1}$. Here $\scrC^-_z$ is equal to $\scrC^-_z$ but with the opposite orientation. Since $\rho_n^{-1} = o(\min (\de, 1- \de))$ by Lemma \ref{L:necess-scale}, these contours will satisfy the assumptions of those propositions for large enough $n$.
	
Note that $\scrC^-_z$ may go to $\infty$ rather than forming a closed loop around $1$. To justify this deformation, observe that for any $t, b \in \R$, and for all large enough $n \in \N$, the following holds for all $y \ge b$ and $w \in \R$:
	$$
	\int_{|z| = r} \lf|\frac{1}{F(y_n, t_n; z)(w - z)} \rg| dz \le (c_\be r)^{y_n - t_n - n} \to 0 \qquad \mathas r \to \infty.
	$$
	Here $c_\be$ is a positive $\be$-dependent constant. 
	For the convergence above, we have used that 
	$$
	y_n - t_n \le \floor{\ga(m_n) + \ga'(m_n) \tau_n t - \chi b} - \floor{m_n  + \tau_n t}.
	$$
	The right hand side above converges to $-\infty$ with $n$ since $\ga(m_n) \le m_n$ and $(1-\ga'(m_n))\tau_n \gg \chi_n$ by \eqref{E:bernoulli-scaling} and since $\chi_n \to \infty$. 
	
	Now for each $\Om > 0$, we will write $\scrC_w = \scrC_{w, \Om} \cup \scrC_{w, \Om}^c$, where $\scrC_{w, \Om}$ is the restriction of $\scrC_w(t)$ to the interval $t \in [-\Om \rho_n^{-1}, \Om \rho_n^{-1}]$ and $\scrC^c_{w, \Om}$ is the remaining part of $\scrC_w$. We similarly decompose $\scrC^-_z = \scrC^-_{z, \Om} \cup \scrC_{z,\Om}^{-,c}$. Note that for any fixed $\Om$, for large enough $n$, the contour $\scrC_{w, \Om}$ consists of two rays emanating from $\de - \rho_n^{-1}$ with arguments $2\pi/3$ and $-2\pi/3$ by Proposition \ref{P:G1-bounds-w}(III) and the fact that $\rho_n^{-1} = o(\min (\de, 1- \de))$. Similarly, the contour $\scrC^-_{z,\Om}$ consists of two rays emanating from $\de + \rho_n^{-1}$ with arguments $4\pi/9$ and $-4\pi/9$ for large enough $n$ by Proposition \ref{P:G1-bounds-z}(III).
	
	\noindent \textbf{Step 3: Convergence along $\scrC_{w, \Om}, \scrC^-_{z, \Om}$.} \qquad Taylor expanding $L, L_t,$ and $L_x$ around the point $\de$ gives
	\begin{equation}
	\label{E:G-expands}
	\begin{split}
	nL(\de + \rho_n^{-1}u) - nL(\de)&= -u^3/3 + O(n u^4\rho_n^{-4} L^{(4)}(\de))\\
	\tau_n L_t(\de + \rho_n^{-1}u) - \tau_n L_t(\de)&= u^2 + O(\tau_n u^3\rho_n^{-3} L_t^{(3)}(\de)) \\
	\chi_n L_x(\de + \rho_n^{-1}u) - \chi_n L_x(\de) &= u + O(\chi_n u^2\rho_n^{-2} L_x^{(2)}(\de))
	\end{split}
	\end{equation}
	for $u \in [-\Om, \Om]$. 
	Here the constant in the big-$O$ notation depends only on $\Om$.
	To deal with the error terms, observe that
	\begin{align*}
	L^{(4)}(\de) &= -L^{(3)}(\de) \lf(\frac{1}{\de} + \frac{1}{\de + \be^{-1}} + \frac{1}{\de - 1} \rg) \\
	L_t^{(3)}(\de) &= -L_t^{(2)}(\de) \lf(\frac{1}{\de} + \frac{1}{\de + \be^{-1}} \rg) \\
	L_x^{(2)}(\de) &= -L_x^{(1)}(\de) \lf(\frac{1}{\de} \rg)
	\end{align*}
	By these calculations, the equations in \eqref{E:scale-taylor}, and the scaling relationship in Lemma \ref{L:necess-scale}, each of the errors in \eqref{E:G-expands} tends to $0$ as $n \to \infty$, uniformly over $u \in [-\Om, \Om]$. Therefore making the change of variables $u = (w-\de)\rho_n $ and $v = (z-\de)\rho_n $, we can write
	\begin{align}
	\label{E:J-trunc}
	\chi_n &\frac{F(y_n, t_n; \de)}{F(x_n, s_n; \de)} \frac{1}{(2\pi i)^2}\int_{\scrC_{w, \Om}}\int_{\scrC^-_{z, \Om}} \frac{F(x_n, s_n ; w)}{F(y_n, t_n ; z)} \frac{1}{w(w-z)}\,dz\,dw \\
	\nonumber
	&= \frac{\chi_n}{(2\pi i)^2}\int_{\Ga^u_\Om}\int_{\Ga^v_\Om} [1 + \ep(x, s, y, t; u, v)]\frac{G (x, s ; w)}{G(y, t; z)} \frac{1}{(u-v)} \frac{1}{\rho_n \de + u}\,du\,dv.
	\end{align}
	Here the error term $\ep(x, s, y, t; u, v)$ comes from the error terms in \eqref{E:G-expands}. In particular, it converges to $0$ uniformly for $u, v \in [-\Om, \Om]$ for all fixed $x, s, y, t$ by the discussion above. Moreover, by \eqref{E:F-be-n} it scales at most linearly in $x, s, y, t$. Therefore $\ep \to 0$ uniformly for bounded values of $x, s, y, t$ and $u, v \in [-\Om, \Om]$.
	
	The contours $\Ga^u_\Om$ and $\Ga^v_\Om$ are the rescaled versions of $\scrC_{w, \Om}$ and $\scrC_{z, \Om}$. We can write $\Ga^u_\Om$ explicitly as consisting of two rays emanating from $-1$ of length $\Om$, with arguments $2\pi/3$ and $-2\pi/3$, and we can similarly write  $\Ga^v_\Om$ explicitly as consisting of two rays emanating from $1$ of length $\Om$, with arguments $4\pi/9$ and $-4\pi/9$. Because $\chi_n = \rho_n \de$, and $\rho_n \de \to \infty$, we can then conclude that by the bounded convergence theorem, the right hand side of \eqref{E:J-trunc} converges to
	\begin{equation} \label{eq:Gint}
	\frac{1}{(2\pi i)^2} \int_{\Ga^u_\Om} \int_{\Ga^v_\Om} \frac{G (x, s ; w)}{G(y, t ; z)} \frac{1}{(u-v)} du \, dv
	\end{equation}
	uniformly on compact sets of the parameters $x, s, y, t$. Moreover, using the explicit formula \eqref{E:G-formula} for $G$, and using that $G$ has no poles or zeros we can 
	see that for any compact set $K \sset \R^4$, there exist constants $c_1> 0$, $a\in(0,1)$ such that for all $\Om > 0$, we have
		\begin{equation*}
	\sup_{(x, s; y, t) \in K} \lf|\frac{1}{(2\pi i)^2} \int_{\Ga^u_\Om} \int_{\Ga^v_\Om} \frac{G (x, s ; w)}{G(y, t ; z)} \frac{dv du}{(u-v)} - \frac{1}{(2\pi i)^2} \int_{\Ga_u} \int_{\Ga_v} \frac{G(x, s; v)}{G(y, t; u)} \frac{d v d u}{u - v} \rg| \leq c_1 a^{\Omega^3}.
	\end{equation*}
	Here $\Gamma_u$ and $\Gamma_v$ are the contours for the Airy kernel as in \eqref{E:G-formula}. Putting this together with the convergence of \eqref{E:J-trunc} to \eqref{eq:Gint} gives that the limsup as $n \to \infty$ of
	\begin{equation*}\sup_{(x, s; y, t) \in K} \lf| \frac{F (y_n, t_n; \de)}{F(x_n, s_n; \de)} \frac{\chi_n}{(2\pi i)^2}\intop_{\scrC_{w, \Omega}}\intop_{\scrC^-_{z, \Omega}} \frac{F(x_n, s_n ; w)}{F(y_n, t_n ; z)} \frac{\,dz\,dw}{w(w-z)} - \frac{1}{(2\pi i)^2} \int_{\Ga_u} \int_{\Ga_v} \frac{G(x, s; v)}{G(y, t; u)} \frac{d v d u}{u - v}  \rg| 
	\end{equation*}
	is at most $c_1 a^{\Om^3}$.
	
	\noindent \textbf{Step 4: Bounds along $\scrC_{w, \Om}^c, \scrC_{z, \Om}^{-, c}$.} \qquad 
	 To complete the proof of Proposition \ref{P:kern-cvg}(i), we just need to show that for every compact $K \sset \R^4$, we have
	\begin{align}
	\label{E:C-Z}
	\lim_{\Omega \to \infty} \limsup_{n \to \infty} \sup_{(x, s; y, t) \in K} \lf| \chi_n\frac{F (y_n, t_n; \de)}{F(x_n, s_n; \de)} \frac{1}{(2\pi i)^2}\int_{\scrC^c_{w,\Omega}}\int_{\scrC^-_z} \frac{F(x_n, s_n ; w)}{F(y_n, t_n ; z)} \frac{\,dz\,dw}{w(w-z)} \rg|  = 0
	\end{align}
	and similarly with $\scrC_w$ in place of $\scrC^c_{w, \Om}$ and $\scrC^{-, c}_{z,\Om}$ in place of $\scrC_z^-$. By combining the estimates in \eqref{E:small-deriv-bds} with those in Propositions \ref{P:G1-bounds-w}(IV) and \ref{P:G1-bounds-z}(IV) , we have that for every compact set $K$, there exist universal constants $c_1$ and $c_2$ such that for large enough $n$, the following bound holds along $\scrC_w$ for $x, s, y, t \in K$.
	\begin{align*}
 &\;\Re(\log F (x_n, s_n; \scrC_w(t))) \\
	 \leq&\;  \Re(\log F (x_n, s_n;\de)) + \frac{c_1(m_n + n - \ga(m_n))}{(\de + \be^{-1})\de(1-\de)}\rho_n^{-3} - \int_0^{|t|} c_2 n |L'(\scrC_w(s))|ds \nonumber \\
	\nonumber
	=&\; \Re(\log F(x_n, s_n;\de)) + c_1/2 - \int_0^{|t|} c_2 n |L'(\scrC_w(s))|ds.
	\end{align*}
	Here we have used that $2\rho_n^3 =- n L'''(\de)$ to go from the second to the third line.
	Similarly, along $\scrC^-_z$ we have that
		\begin{align*}
	\Re(\log F(x_n, s_n ; \scrC_z(t))) &\geq  \Re(\log F(x_n, s_n;\de)) - c_1/2 + \int_0^{|t|} c_2 n |L'(\scrC_z(t))|dt.
	\end{align*}
 We now parametrize the contours so that $\scrC_w$ gets parametrized by $t_1 \in [-t_w, t_w]$ and $\scrC^-_z$ gets parametrized by $t_2\in [-t_z, t_z]$ as in Propositions \ref{P:G1-bounds-w} and \ref{P:G1-bounds-z}, and then make the substitution $r_1 = \rho_n t_1$ and $r_2 = \rho_n t_2$. Noting that Propositions \ref{P:G1-bounds-w}(II) and \ref{P:G1-bounds-z}(II) imply that $|\scrC_{w}(t) - \scrC_z(s)| \ge \rho_n^{-1}$ and $|\scrC_{w}(t)| \ge \de/4$, we have the following upper bound on the supremum in \eqref{E:C-Z}:
	
	\begin{align}
	\nonumber
	&\chi_n \int_{[-t_w, t_w] \smin [- \Omega\rho_n^{-1}, \Omega \rho_n^{-1}]} \int_{-t_z}^{t_z}  \frac{4e^{c_1} dt_1 dt_2}{\de\rho_n^{-1}} \exp \lf(-c_2n \lf( \int_0^{|t_1|}|L'(\scrC_w(s))|ds + \int_0^{|t_2|} |L'(\scrC^-_z(s))|ds\rg) \rg)  \\
	\label{E:integral-to-bd}
	&= \int_{[-t_w \rho_n, t_w \rho_n] \smin [-\Omega, \Omega]} \int_{-t_z \rho_n}^{t_z \rho_n} 4e^{c_1} d r_1 dr_2 \exp\lf(-c_2n \lf( \int_0^{\rho_n^{-1} |r_1|}|L'(\scrC_w(s))|ds + \int_0^{\rho_n^{-1} |r_2|} |L'(\scrC^-_z(s))|ds\rg) \rg).
	\end{align}
	For each of the integrated terms in the exponential, we have the following bound after a change of variables by using the second inequality in Propositions \ref{P:G1-bounds-w}(IV) and \ref{P:G1-bounds-z}(IV). Here $\scrC$ is $\scrC^-_z$ or $\scrC_w$.
	\begin{align}
	\nonumber
	\int_0^{\rho_n^{-1} r}n|L'(\scrC(t))|dt &= \int_0^{r} \rho_n^{-1} n|L'(\scrC(\rho_n^{-1} s))| ds \\
	\nonumber
	&\ge \int_0^{r} \frac{(m_n + n - \ga(m_n))\rho_n^{-3} s^2}{(\de + \rho_n^{-1} s)(\de + \be^{-1} + \rho_n^{-1} s)(1 - \de + \rho_n^{-1} s)} ds \\
	\nonumber
	&= \int_0^{r} \frac{1}{2} \frac{\de s}{\de + \rho_n^{-1} s} \frac{(1 -\de) s}{1 - \de + \rho_n^{-1} s} \frac{\de + \be^{-1}}{\de + \be^{-1} + \rho_n^{-1} s} ds\\
	\label{E:RHS-final}
	&\ge \int_0^{r} \frac{1}{16} (s \wedge \rho_n \de)(s \wedge \rho_n (1-\de))\lf(1 \wedge \frac{\rho_n (\de+\be^{-1})}s\rg)ds.
	\end{align}
	For the equality in the third line, we have used \eqref{E:scale-taylor}. For the final equality, we have thrice used the bound $a/(b+ c) \ge [(a/b) \wedge (a/c)]/2$ for positive real numbers $a, b, c$.
	For large enough $n$, all of $\rho_n \de, \rho_n(1-\de)$ and $\rho_n(\de + \be^{-1})$ are strictly greater than $1$ by Lemma \ref{L:necess-scale}. Therefore the integrand above is bounded below by
	\begin{align}
	\label{E:60}
	\frac{1}{16} \lf(s^2 \wedge 1 \wedge\frac{\rho_n^3 \de (1 - \de)(\de+\be^{-1})}s\rg) \ge \frac{1}{16} \lf(s^2 \wedge 1 \wedge\frac{n}s\rg).
	\end{align}
	The inequality in \eqref{E:60} follows since $\rho_n^3 \de (1 - \de)(\de+\be^{-1}) = 2(m_n + n - \ga(m_n)) > n$. Using \eqref{E:60} and \eqref{E:RHS-final} implies that for $\Om \ge 0, |r_1| \ge \Om$ and $r_2 \in \R$, for all large enough $n$ we have
\begin{align*}
\exp&\lf(-c_2n \lf( \int_0^{\rho_n^{-1} r_1}|L'(\scrC_w(s))|ds + \int_0^{\rho_n^{-1} r_2} |L'(\scrC^-_z(s))|ds\rg) \rg) \\
&\le ce^{- c_2(r_1 \wedge n)/32}(1 \wedge r_1^{-10}) (1 \wedge r_2^{-10}) \\
&\le ce^{-c_2 \Om/32} (1 \wedge r_1^{-10}) (1 \wedge r_2^{-10}).
\end{align*}
Here $c$ is a large constant that may change from line to line.
Integrating out $r_1, r_2$ then gives that \eqref{E:integral-to-bd} is bounded above by $ce^{-c_2 \Om/32}$ for all large enough $n$, and hence \eqref{E:C-Z} holds. Moreover, the exact same arguments work to show that \eqref{E:C-Z} with $\scrC_w$ in place of $\scrC_{w, \Omega}^c$ and $\scrC^{-, c}_{z, \Omega}$ in place of $\scrC^-_z$ since we only used integrand bounds which are the same along the two contours $\scrC^-_z$ and $\scrC_w$. This completes the proof of Proposition \ref{P:kern-cvg} (i).
	
	\noindent \textbf{Step 5: Proposition \ref{P:kern-cvg} (ii).} \qquad
	For Proposition \ref{P:kern-cvg} (ii), observe that we can bound $\tilde J_n$  by comparing to the case when $x=0, y=0$ by using the identity $F(x, s; w) = F(y, s, w) w^{y - x}$. Indeed, letting $0_n$ denote $x_n$ or $y_n$ when $x=0$ or $y=0$ we have
	\begin{align*}
\bigg|\chi_n &\frac{F(y_n, t_n; \de)}{F(x_n, s_n; \de)} \frac{1}{(2\pi i)^2}\int_{\scrC_w}\int_{\scrC^-_z} \frac{F(x_n, s_n; w)}{F(y_n, t_n ; z)} \frac{dz dw}{w(w-z)}\bigg| \\
= \bigg|\chi_n &\frac{F(0_n, t_n; \de)}{F(0_n, s_n; \de)} \delta^{\chi_n(y - x)} \frac{1}{(2\pi i)^2}\int_{\scrC_w}\int_{\scrC^-_z} \frac{F(0_n, s_n; w)}{F(0_n, t_n ; z)} w^{\chi_n x} z^{-\chi_n y} \frac{dz dw}{w(w-z)}\bigg| \\
&\le \lf(\frac{\de - \rho_n^{-1}}{\de}\rg)^{\chi_n x} \lf(\frac{\de}{\de + \rho_n^{-1}}\rg)^{\chi_n y} \lf(\chi_n \frac{F(0_n, t_n; \de_n)}{F(0_n, s_n; \de_n)} \frac{1}{4 \pi^2}\int_{\scrC_w}\int_{\scrC^-_z} \frac{|F(0_n, s_n; w)|}{|F(0_n, t_n ; z)|}  \frac{\,dz\,dw}{|w(w-z)|} \rg) \\
&\le c\lf(\frac{\de - \rho_n^{-1}}{\de}\rg)^{\chi_n x} \lf(\frac{\de}{\de + \rho_n^{-1}}\rg)^{\chi_n y}.
\end{align*}
	Here in the first inequality, we have brought out the terms depending on $x$ and $y$ and used that the contours $\scrC_w$ and $\scrC^-_z$ live in/out of the disk of radius $\de \pm \rho_n^{-1}$ as in Propositions \ref{P:G1-bounds-w} (III) and \ref{P:G1-bounds-z} (III). For the second inequality, we require that the remaining term inside the brackets is uniformly bounded in $n$. This follows from the bounds carried out in Step 4 in the special case when $\Om = 0$.
%
%
Since $\chi_n = \de \rho_n$ and $\rho_n^{-1} = o(\de)$ as $n \to \infty$, the right hand side above is then bounded by $c e^{-d(x + y)}$, as desired.
\end{proof}

\section{Uniform convergence}\label{S:Uniform}

In this section, we use the Gibbs property to upgrade the finite dimensional distributional convergence of nonintersecting walks to uniform-on-compact convergence. This will finish the proof of Theorem \ref{T:main-walk}. Using the Gibbs property to control the ensemble  also features in the main theorem of \cite{CH}, which applies to nonintersecting Brownian motions.

Given real numbers $s<t$, $x$ and $y$, let $\nu(s,x;t,y)$ denote the law of a Brownian bridge $B : [s,t]\to \mathbb R$ with variance $2$ and $B(s)=x, B(t)=y$. We call $B$ a Brownian bridge from $(s, x)$ to $(t, y)$. The law $\nu(s,x;t,y)$ is a probability measure on the space of continuous functions equipped with the topology of uniform convergence and the Borel $\sigma$-algebra.

For each $n$, we also consider a collection of random walk bridge laws $\eta = \eta_n(s, x; t, y)$. This collection will take values on a countable discrete mesh $(s, x; y, t) \in \mathbb{L}_n \sset \R^4$, such that the following conditions hold:
\begin{itemize}[nosep]
    \item 
$\mathbb{L}_n \cap K \to \{(s, x; t, y) \in K : s \le t\}
$
in the Hausdorff metric as $n \to \infty$ for every compact set $K \sset \R^4$. Here recall that the Hausdorff metric on closed subsets of $\R^n$ is given by
\begin{equation}
\label{E:hausdorff}
d_H(A, B) = \max \{\sup_{x \in A} \inf_{y \in B} \|x - y\|_2, \sup_{y \in B} \inf_{x \in A} \|x - y\|_2\}.
\end{equation}
Note that this may be infinite for certain choices of closed subsets.
\item The projection of $\mathbb{L}_n$ onto the first and third coordinates is equal to $a_n^{-1} \Z$ for some sequence $a_n \to \infty$.
\end{itemize}

When talking about discrete random walks in this section, we always consider their piecewise linear versions as continuous functions. We ask that our collection satisfies some simple properties. 

\begin{description}

\item[Bridge property:] For every $(s, x; t, y) \in \mathbb{L}_n$, the distribution $\eta_n(s,x,t,y)$ is supported on continuous functions $f:[s,t]\to \mathbb R$
with $f(s)=x,f(t)=y$ such that
\begin{itemize}
    \item $f$ is linear on each segment $[a_n^{-1} \ell, a_n^{-1} (\ell+1)]$.
    \item For every $r \le r' \in [s, t] \cap a_n^{-1} \Z$, the quadruple
    $(r, f(r); r', f(r'))$ is in $\mathbb{L}_n$.
\end{itemize}

\item[Bridge Gibbs property:] Let
$(s, x; t, y) \in \mathbb{L}_n$ and let $u, v \in a_n^{1} \Z$ for some $[u, v] \sset [s, t]$.
If $X\sim\eta_n(s,x,t,y)$, then almost surely,
$$
\p\lf(X|_{[u,v]} \in \cdot \; | \; \sig(X|_{[u,v]^c})\rg) = \eta_n(u,X(u), v,X(v)).
$$
This is an almost sure equality of random measures. The right hand side is well defined since $\eta_n(u,X(u), v,X(v)) \in \mathbb{L}_n$ by the bridge property.

\item[Brownian limit:] For any $(s_n, x_n; t_n, y_n) \in \mathbb{L}_n$ converging to $(s, x; t, y)$ as $n\to\infty$ with $s < t$, we have $\eta_n(s_n,x_n,t_n,y_n) \to \nu(s, x; t, y)$. Here the underlying topology is the weak topology on the space of Borel measures on continuous functions from $\R \to \R$ with the uniform-on-compact topology. We can associate $\eta_n, \nu$ to measures on this space by associating any continuous function $f:[s, t] \to \R$ with the continuous function $f':\R \to \R$ which equals $f$ on $[s, t]$, and is constant on $(-\infty, s]$ and $[t, \infty)$.

\end{description}

%

A set $D\subset \mathbb R^2$ is called \textbf{downward closed} when $D$ is closed and has the property that $(t,x')\in D$ implies  $(t,x)\in D$ for all $x<x'$. For any downward closed set, we can define $u_D:\R \to \R\cup \{\pm \infty\}$ by
$$
u_D(t) = \sup \{y \in \R : (y, t) \in D\}. 
$$
Since $D$ is closed, $u_D$ is necessarily upper semicontinuous. Moreover, since $D$ is downward closed we have
\begin{equation}
\label{E:uD}
D = \{(x, y) \in \R^2 : y \le u_D(x)\}.
\end{equation}
Now consider a sequence of $k$ endpoints $e= (e_1, \dots, e_k)$ and a downwards closed set $D$. We call the pair $(e, D)$ a \textbf{$k$-admissible configuration} for $\eta_n$ (or $\nu$) if the endpoints $e_i$ are of the form
$e_i = (a,x_i; b,y_i) \in \mathbb{L}_n$ (or $\R^4$), with $a < b$, $x_1 > x_2 > \ldots > x_k$ and $y_1 > y_2 > \ldots > y_{k}$, and there is a positive probability that $k$ independent bridges with laws $\eta_n(e_i)$ (or $\nu(e_i)$) avoid each other and $D$. More precisely, letting $\scrC_{a, b}^k$ denote the space of all $k$-tuples of continuous functions on $[a, b]$ with the topology of uniform convergence, this says that the open set
\begin{equation}
\label{E:AD-def}
V(D) := \{f \in \scrC_{a, b}^k : f_1(s) > f_2(s) > \dots > f_k(s) > u_D(s), \;\; \forall s \in [a, b]\}
\end{equation}
has positive $\eta_n(e_1) \X \cdots \X \eta_n(e_k)$-measure (or $\nu(e_1) \X \cdots \X \nu(e_k)$-measure). Note that a configuration $(e, D)$ with $e_i = (a,x_i; b,y_i) \in \R^4$ where $a < b$, $x_1 > x_2 > \ldots > x_k$ and $y_1 > y_2 > \ldots > y_{k}$ is $k$-admissible for $\nu$ exactly when $x_k > u_D(a)$, $y_k > u_D(b),$ and $\sup_{s \in [a, b]} u_D(s) < \infty$.

If $(e, D)$ is a $k$-admissible configuration for $\eta_n$, then we let the {\bf nonintersecting bridge law} $\eta^k_n(e,D)$ be the law of $k$ independent bridges with laws $\eta_n(e_i)$ conditioned to avoid each other and $D$. We will also write $\eta_n^k(e) := \eta_n(e_1) \X \cdots \X \eta_n(e_k)$ for the law of $k$ independendent bridges with no ordering condition imposed. We similarly define $\nu^k(e, D), \nu^k(e)$.

\begin{description}
\item[Monotonocity:]
For $a,b$ fixed we say $e\le e'$ if $x_i\le x_i'$ and $y_i\le y_i'$ for all $1\leq i\leq k$. The family $\eta_n$ satisfies the monotonicity property if $\eta_n^k(e,D)$ is stochastically dominated by $\eta_n^k(e',D')$ whenever both $(e, D)$ and $(e, D')$ are $k$-admissible configurations for $\eta_n$, $e\le e'$, and $D\subset D'$. That is, there exists a probability space $(\Om, \mathcal F, \mathbb P)$ on which we can define two random variables $X, Y$ with laws $\eta_n^k(e,D)$ and $\eta_n^k(e',D')$, respectively, such that $\p$-almost surely we have
$
X_i(s) \le Y_i(s)
$
and all $s \in [a, b], i = 1, \dots, k$.
\end{description}

As the conditions above are somewhat abstract, we present the following concrete example of a sequence of rescaled Bernoulli walk measures that satisfy the above conditions. We will later apply the abstract framework of this section to this example to complete the proof of Theorem \ref{T:main-walk}.

\begin{example}
	 \label{E:Bernoulli-walks}
	 First, let 
	 $$
	 \mathbb{L}_0 = \{(s, x; t, y)\in \Z^2: |y - x| \le t-s, \; s + x, y + t \in 2\Z \}.
	 $$
	 For $(s, x; t, y) \in \mathbb{L}_0$, let $\tilde \eta(s, x; t, y)$ denote uniform measure on continuous functions $f:[s, t] \to \R$ such that $f(s) = x, f(t) = y$, and such that on every integer interval $[n, n+1], n \in \{s, \dots, t-1\}$, $f$ is a linear function with slope either $-1$ or $1$. In other words, $\tilde \eta(s, x; t, y)$ is uniform measure on simple random walk paths from $(s, x)$ to $(t, y)$. The family $\tilde \eta$ satisfies the bridge and bridge Gibbs properties; the second is an immediate consequence of the fact that the restriction of uniform measure to a set is still uniform. Moreover, the family $\tilde \eta$ satisfies monotonicity, see the proof of Lemmas 2.6 and 2.7 in \cite{CH}.
	 
	 Families of rescaled Bernoulli random walks that fit into the context of Theorem \ref{T:main-walk} can easily be obtained by rescaling $\tilde \eta$. With notation as in that theorem, let
	 $$
	 B_n = \lf[\begin{array}{cc}
	      \tau_n^{-1} & 0 \\
	      -\chi_n^{-1} \ga' & \chi_n^{-1}
	 \end{array}\rg] \lf[\begin{array}{cc}
	 1 & 0 \\
	1/2  & 1/2
	 \end{array}\rg]
	 $$
	 be a sequence of scaling transformations of the plane. Define associated transformations $\tilde B_n:\R^4 \to \R^4$ and $\hat B_n$ which operate on real-valued functions whose domain is an interval in $\R$ by letting
	 $$
	 \tilde B_n(s, x; t, y) = (B_n(s, x); B_n(t, y)), \qquad \hat B_n f(s) = \chi_n^{-1}\lf(\frac{f(\tau_n^{-1} s) + s}2 - \ga' s\rg).
	 $$
	 Note that $\hat B_n f$ is the function whose graph is given by the graph of $f$, rescaled by $B_n$. Define $\mathbb{L}_n = \tilde B_n \mathbb{L}_0$, and for $u \in \mathbb{L}_n$, define $\eta_n(u)$ by setting
	 $$
	 \eta_n(u)(A) = \tilde \eta(\tilde B_n^{-1} u)(\hat B_n^{-1} A). 
	 $$
	 We can think of $\eta_n$ as the pushforward of the family $\eta$, under the map $B_n$. When thinking about the transformations $B_n$, the leftmost matrix takes simple random walk paths to Bernoulli walk paths, and the second matrix rescales those walks to converge to Brownian motion.
	 
	 Condition (i) in Theorem \ref{T:main-walk} and the relationships between $\tau_n, \chi_n$, guarantee that the collection $\eta_n$ satisfies the Brownian limit property; this is simply weak convergence of simple random walk bridges to a  Brownian bridge as the number of steps $n$ tends to infinity and the displacement of the endpoints of the bridge is $O(\sqrt{n})$. The remaining properties are inherited from $\tilde \eta$.
	 \end{example}

\subsection{Convergence Theory}

For a sequence of functions $f_1,f_2,\ldots$, reals $a<b$ and $k\in \mathbb N$,  let  $\mathcal E_{k,a,b}f$ denote the $k$-tuple of endpoints $((a, f_1(a);b,f_1(b))$, $\ldots$ $(a, f_k(a); b,f_k(b)))$. We will drop $a,b$ from the subscript in $\mathcal E_{k,a,b}$ when their role is clear.

Let  $\bar {\mathbb R} = {\mathbb R}\cup\{ -\infty, \infty\}$ be the two-point compactification of the real line.
For an interval $I$ and a continuous function $f:I\to \bar{\mathbb R}$ let
$$\bar f=\{(x,y)\in I \times \bar{\mathbb R}\ :\ y\le f(x)\}$$
denote the sublevel set of $f$. This is a downward closed set.

\begin{theorem}[Uniform convergence on compact sets]\label{T:uniform}
	Let $J_n$ be a sequence of real intervals with $J_n \to \R$.
For each $n$, let $\scrA^n=(\scrA^n_1,\scrA^n_2,\ldots, \scrA^n_{\al_n})$ be a random sequence of continuous functions from $J_n \to\mathbb R$ such that almost surely, $\scrA^n_1(x) > \scrA^n_2(x) > \dots$ for all $x \in \R$. We also allow each sequence  $\scrA^n$ to consist of only finitely many functions $\al_n$ so long as $\al_n \to \infty$ with $n$.

Assume that the following conditions hold:

(i) The finite dimensional distributions of $\scrA^n$ converge to $\scrA$, the parabolic Airy line ensemble.

(ii) There exists a family of random walk bridge laws $\eta$ with the bridge, bridge Gibbs, Brownian limit and monotonicity properties such that $\scrA_n$ satisfies the following Gibbs property:

With $\mathbb{L}_n, a_n$ as in the definitions above, for any $k \in \N$ and any compact interval $I$ the following holds for all large enough $n$. 
Consider any  real numbers $c < d \in a_n^{-1} \Z \cap I$, and let $\scrF_{k, c, d}$ denote the $\sig$-algebra generated by all values $\scrL_i(x), (i, x) \notin \{1, \dots, k\} \X (c, d)$.
Then almost surely $$
\p\lf( (\scrA^n_1, \dots, \scrA^n_k)|_{[c,d]} \in \cdot \; | \; \scrF_{k, c, d} \rg) = \eta^k_n(\mathcal E_{k,c, d} \scrA^n,\bar\scrA_{k+1}^n).
$$ 
As in the Bridge Gibbs property above, this is an almost surely equality of random measures.

Then $\scrL^n = (\scrA^n_1, \scrL^n_2 \dots, )$ converges in distribution the parabolic Airy line ensemble $\scrA = (\scrL_1, \scrL_2, \dots)$ in the product topology on sequences of functions from $\R \to \R$, where convergence of individual functions is in the uniform-on-compact topology.
\end{theorem}


By the definition of the uniform-on-compact topology, it suffices to show that for each $k$, and each interval $I = [a, b]$ with $a, b \in \Z$ that the process $(\scrA_1^n, \dots, \scrA_k^n)|_I$ is tight with respect to uniform convergence.  
 Without loss of generality, we can assume that $a_n \in \Z$ for all $n$, and so $\Z \sset a_n^{-1} \Z$. Indeed, if not then we can reduce to this case by the following time reparametrization. Let $m_n$ be a sequence of positive integers such that $m_n a_n \to 1$. Then $\scrL^n(\cdot)$ converges uniformly on compacts to the parabolic Airy line ensemble if and only if $\scrL^n(m_n a_n \cdot)$ does. Moreover, $\scrL^n(m_n a_n \cdot)$ satisfies all the same assumptions as $\scrL_n$, with the family of laws $\eta$ replaced by a time-dilated family of laws $\tilde \eta$ defined on $\mathbb{L}_n^* = \{(m_n a_n s, x; m_n a_n t, y) : (s, x; t, y) \in \mathbb{L}_n\}$. All time coordinates of $\mathbb{L}_n^*$ lie in $m_n^{-1} \Z$.

For the rest of the section, all random walk bridge laws will come from the family $\eta$.
We start with an intuitive convergence lemma for the family $\eta$. 
For this next lemma and moving forward in this section, we will work with downward closed sets defined on $I \X \bar \R$ for some compact interval $I$. The set of closed subsets of $I \X \bar \R$ forms a complete separable space when endowed the metric
$$
d(D_1, D_2) = d_H(f(D_1), f(D_2)),
$$
where $f(x, y) = (x, e^y/(1 + e^y))$ and $d_H$ is the Hausdorff metric on closed subsets of $I \X [0, 1]$. In particular, since we are working on a complete separable metric space we may apply Skorokhod coupling. Moving forward, we will refer to this construction simply as the Hausdorff topology on $I \X \bar \R$. Note that limits of downward closed sets are downward closed in this topology.
\begin{lemma}
	\label{L:eta-cvg}
Fix $k \in \N$, and let $(E^n, D^n)$ be a sequence of random $k$-admissible configurations for $\eta_n$ that converge in distribution to $(E, D)$, a random $k$-admissible configuration for $\nu$.
 Here the underlying topology is pointwise convergence for points $E_n$, and the Hausdorff topology on the sets $D_n$ as described above. Further assume that all the $E^n$ are defined on the same deterministic interval $[a, b]$, e.g. $E^n_i = (a, X^n_i; b, Y^n_i)$ for random $X^n_i, Y^n_i$.
 Then (recalling the notation from \eqref{E:AD-def}), we have
\begin{equation}
\label{E:wantwant}
\eta^k_n(E^n)(V(D^n)) \cvgd \nu^k(E)(V(D)) \qquad \mathand \qquad \eta^k_n(E^n, D^n) \cvgd \nu^k(E, D).
\end{equation}
\end{lemma}

To prove Lemma \ref{L:eta-cvg}, we need the following auxiliary result.

\begin{lemma}
\label{L:cty-set}
For an admissible configuration $(e,D)$ the set $V(D)$ is a continuity set for $\nu^k(e)$.
\end{lemma}

\begin{proof}
Set 
\begin{align*}
X_i(f)&=\inf_s \{ f_i(s)-f_{i+1} (s)\}, \qquad i=1,\ldots k-1, \\
X_k(f)&=\inf_s \{ f_k(s)-u_D(s)\}, \\
M(f)&=\min_i X_i(f).
\end{align*}
It is easy to check that $M$ is a continuous function from $\mathcal{C}_{a,b}^k\to \mathbb R$, and so
$V(D)= \{f:M(f)>0\}$.
Similarly, $\{f:M(f)\ge - \epsilon\}$ is a closed set containing $V(D)$, and 
$$
\partial V(D)\subset \{f:M(f)\ge -\epsilon \}\setminus V(D) = \{f:M(f)\in [-\epsilon,0]\}.
$$
If the law of $M(f)$ under $\nu^k(e)$ has no atom at zero, then the probability of the right hand side tends to zero as $\epsilon\to 0$. For this, it suffices to show that none of the $X_i$ have an atom at zero. 

The random variables $X_1, \ldots ,X_{k-1}$ are each distributed as the minima of Brownian bridges, and hence have no atom at $0$. 
To prove that $X_k$ has no atom at $0$, we write $X_k=\min(y,\inf_{n} Y_n)$, where
\begin{align*}
Y_n=\inf_{s\in [a+1/n,b-1/n]} \{f_k(s)-u_D(s)\}, \qquad y=\min(f_k(a)-u_D(a),f_k(b)-u_D(b)).
\end{align*}
Note that since $u_D$ is upper semicontinuous and $f_k$ is continuous, $f_k - u_D$ is lower semicontinuous. Therefore either $\min(y,\inf_{n} Y_n) = y$ or else $\min(y,\inf_{n} Y_n) = Y_n$ for some value of $n$.
Since $y\not=0$ by admissibility, to prove that $X_k$ has no atom at $0$ it suffices to show that $Y_n$ does not have an atom at zero for each $n$. 
Let 
$$
S_n=\frac{f(a+1/n)+f(b-1/n)}2, \qquad D_n=\frac{f(a+1/n)-f(b-1/n)}2.
$$
and let $\ell_n(s)$ be the linear function with $\ell_n(a+1/n)=1, \ell_n(b-1/n)=-1$. If $f$ is a Brownian bridge on $[a, b]$ with any endpoints, then we have the decomposition
$$
f(s)=S_n(f)+D_n(f)\ell_n(s)+ B(s), \qquad s\in [a+1/n,b-1/n],
$$
where $B$ is a Brownian bridge on $[a+1/n,b-1/n]$ which is $0$ at its endpoints and is independent of $S_n(f), D_n(f)$. Moreover, checking the covariance we see that $S_n, D_n$ are independent normal random variables. Therefore finally
$$
Y_n=S_n(f)+\inf_{s\in [a+1/n,b-1/n]} \{D_n(f)\ell_n(s)+ B(s)\}
$$
is a sum of a normal and an independent random variable, and hence $Y_n$ has no atoms.
\end{proof}

\begin{proof}[Proof of Lemma \ref{L:eta-cvg}]
We first prove this when $E^n, D^n, E, D$ are all deterministic. The key step is to show that for any Borel set $A$ we have that
\begin{equation}
\label{E:Ghh}
|\eta^k_n(E^n)(A \cap V(D)) - \eta^k_n(E^n)(A \cap V(D^n))| \to 0
\end{equation}
with $n$. Given \eqref{E:Ghh}, \eqref{E:wantwant} follows from a few applications of the portmanteau theorem. Indeed, since $\eta^k_n(E^n) \to \nu^k(E)$ weakly and $V(D)$ is a continuity set for $\nu^k$ (Lemma \ref{L:cty-set}) the portmanteau theorem gives that
$$
\eta^k_n(E^n)(V(D)) \to \nu^k(E)(V(D))
$$
as $n \to \infty$. The first convergence in \eqref{E:wantwant} then follows since $|\eta^k_n(E^n)(V(D)) - \eta^k_n(E^n)(V(D^n))| \to 0$, the case when $A$ is the whole space in \eqref{E:Ghh}.
Next, $\eta^k_n(E^n, D^n), \nu^k(E, D)$ are Borel measures on $\scrC_{a, b}^k$ defined by
$$
\eta^k_n(E^n, D^n)(A) := \frac{\eta^k_n(E^n)(A \cap V(D^n))}{\eta^k_n(E^n)(V(D^n))}, \qquad \nu^k(E, D)(A) := \frac{\nu^k(E)(A \cap V(D))}{\nu^k(E)(V(D))}
$$
Therefore again by the portmanteau theorem, for the second convergence in \eqref{E:wantwant}, it suffices to prove that for any open set $A \sset \scrC_{a, b}^k$, we have
\begin{equation}
\label{E:eta-kn}
\liminf_{n \to \infty} \frac{\eta^k_n(E^n)(A \cap V(D^n))}{\eta^k_n(E^n)(V(D^n))} \ge \frac{\nu^k(E)(A \cap V(D))}{\nu^k(E)(V(D))}.
\end{equation}
The set $V(D)$ is open in $\scrC_{a, b}^k$, so for any open set $A$, we have
$$
\liminf_{n \to \infty} \eta^k_n(E^n)(A \cap V(D)) \ge  \nu^k(E)(A \cap V(D))
$$ 
by the portmanteau theorem, since $\eta^k_n(E^n) \to \nu^k(E)$ weakly. The convergence \eqref{E:Ghh} implies that the same claim holds with $A \cap V(D^n)$ in place of $A \cap V(D)$ on the left-hand side above. The claim \eqref{E:eta-kn} then follow
from the first convergence in \eqref{E:wantwant}.

We now show \eqref{E:Ghh}.
For any $n_0 \in \N$, any $n \ge n_0$ and any $A$, the left-hand side of \eqref{E:Ghh} is bounded above by
$$
\eta^k_n(E^n)(V(D) \smin V(D^n)) + \eta^k_n(E^n)(V(D^n) \smin V(D))  \le \eta^k_n(E^n)(\close{ S_{n_0}}) + \eta^k_n(E^n)\lf(\close{T_{n_0}}\rg).
$$
where $S_{n_0} = V(D) \smin (\bigcap_{n \ge n_0} V(D^n))$ and $T_{n_0} = (\bigcup_{n \ge n_0} V(D^n)) \smin V(D)$. Therefore again by the portmanteau theorem, the limsup of the left side of \eqref{E:Ghh} is bounded above by
\begin{equation}
\label{E:nuke}
\nu^k(E)(\close{ S_{n_0}}) + \nu^k(E)\lf(\close{T_{n_0}}\rg).
\end{equation}

Now, since $D^n \to D$ in the Hausdorff topology, as $n_0 \to \infty$ both $\bar S_{n_0}$ and $\bar T_{n_0}$ decrease to a subset of the set 
$$
U = \{f \in \scrC^k_{a, b} : f_1(s) \ge \cdots \ge f_k(s) \ge u_D(s) \;\; \forall s \in [a, b], \mathand f_k(s) = u_D(s) \text{ for some } s \in [a, b]\}.
$$
We have that  $U \sset \del V(D)$. Therefore since $V(D)$ is a continuity set for $\nu^k(E)$ (Lemma \ref{L:cty-set}) we have that $\nu^k(E)(U) = 0$. Therefore continuity from above for the measure $\nu^k(E)$ implies that \eqref{E:nuke} converges to $0$ with $n_0$. This implies \eqref{E:Ghh}, as desired.

We move to the case of random $E^n, D^n$. By Skorokhod's representation theorem (which applies here by the discussion prior to Lemma \ref{L:eta-cvg}), we can consider a coupling where $E^n \to E, D^n \to D$ almost surely. Then by the first part of the proof, the convergences in \eqref{E:wantwant} hold almost surely in this coupling. This implies the desired convergence in distribution.
\end{proof}

For this next lemma we use the same notation and assumptions as in the statement of Theorem \ref{T:uniform} (e.g. $I = [a, b]$). This lemma concerns the sublevel set of the second line, $\bar{\scrA^n_2} = \bar{\scrA^n_2}|_I$. This is a random variable taking values in a compact space, namely $I \times \bar{\mathbb R}$ equipped with the Hausdorff distance (see the discussion prior to Lemma \ref{L:eta-cvg}). 

\begin{lemma}[A uniform upper bound for $\bar{\scrA^n_2}$]\label{L:uni0}
 Any subsequential limit $D$ of $\bar{\scrA^n_2}$  is bounded above:
$$\max \{y:(x,y)\in D\}<\infty \qquad {a.s.}$$
\end{lemma}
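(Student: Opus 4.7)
My plan is to prove tightness of $\sup_{x \in I} \scrA^n_2(x)$ as a sequence of real-valued random variables. Since the space of closed subsets of the compact $I \times \bar{\mathbb R}$ under the Hausdorff metric is itself compact, if $\sup_I \scrA^n_2$ is tight then any subsequential limit $D$ satisfies $\max\{y : (x,y) \in D\} < \infty$ almost surely.

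Fix $[a,b] \supset I$. By finite-dimensional convergence (hypothesis (i)) the four endpoint values $\scrA^n_j(a), \scrA^n_j(b)$, $j = 1,2$, are jointly tight, with limits satisfying $\scrA_1 > \scrA_2$ almost surely; so given $\epsilon > 0$, on an event of probability at least $1 - \epsilon$ they lie in a compact $K = K(\epsilon)$ with a uniformly positive gap. Since $\scrA^n_2 < \scrA^n_1$ pointwise, it suffices to establish tightness of $\sup_I \scrA^n_1$, so I work only with the top line.

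Apply the Gibbs property (hypothesis (ii)) with $k = 1$: conditional on $\mathcal{F} := \sigma(\scrA^n_1(a), \scrA^n_1(b), \scrA^n_2|_{[a,b]})$, the restriction $\scrA^n_1|_{[a,b]}$ has the distribution of a random walk bridge $B \sim \eta_n$ with those endpoints, conditioned on the event $\{B > \scrA^n_2 \text{ on } [a,b]\}$. The standard Radon-Nikodym bound
$$
P\bigl(\sup_I \scrA^n_1 > M \mid \mathcal{F}\bigr) \;\le\; \frac{\eta_n(\sup_I B > M)}{\eta_n(B > \scrA^n_2 \text{ on } [a,b])}
$$
reduces everything to estimates under the unconditioned bridge law $\eta_n$. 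The numerator tends to $0$ as $M \to \infty$ uniformly on the high-probability event $\{\text{endpoints} \in K\}$ by the Brownian limit property of $\eta_n$, since a Brownian bridge of variance $2$ with endpoints in $K$ over the bounded interval $[a,b]$ has tight supremum.

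The main obstacle is obtaining a uniform-in-$n$ lower bound on the denominator $\eta_n(B > \scrA^n_2 \text{ on } [a,b])$. The tight positive endpoint gaps $\scrA^n_1(a) - \scrA^n_2(a)$ and $\scrA^n_1(b) - \scrA^n_2(b)$ guarantee that $B$ starts and ends above $\scrA^n_2$ by a uniformly positive amount, but the interior oscillations of the random barrier $\scrA^n_2$ could in principle be large. I expect this is handled either by a further Gibbs argument with $k = 2$ giving inductive control of $\scrA^n_2$'s fluctuations (using monotonicity to drop the barrier $\scrA^n_3$ to $-\infty$ and thereby compare to a pair of free nonintersecting bridges), or by choosing $[a,b]$ short enough that $\eta_n$'s Brownian limit controls the fluctuations of $\scrA^n_2$ away from the straight line interpolating its endpoints. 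The key subtlety is circumventing the apparent circularity of needing tight control on $\scrA^n_2$ in the interior to bound its own supremum.
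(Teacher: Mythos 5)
Your reduction (bounding $\sup_I \scrA_1^n$ instead of $\sup_I\scrA_2^n$) and the Radon--Nikodym inequality under the Gibbs property are both correct as written. But the gap you flag at the end is not a subtlety that a patch can fix: a uniform lower bound on $\eta_n(B > \scrA_2^n \text{ on } [a,b])$ requires upper control on the interior height of $\scrA_2^n$, which is exactly the content of the lemma. Neither of your two suggested fixes breaks this circle. A $k=2$ Gibbs argument replaces the barrier $\scrA_2^n$ by $\scrA_3^n$, but after dropping $\scrA_3^n$ to $-\infty$ by monotonicity you are left comparing to a pair of free nonintersecting bridges, and nonintersection pushes the top bridge \emph{up}, not down --- it supplies no new upper bound on $\sup(\scrA_1^n,\scrA_2^n)$ beyond what the endpoints already give. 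Shrinking $[a,b]$ does not help either: $\scrA_2^n$ is not distributed as a free $\eta_n$-bridge between its own endpoints (it is itself conditioned to avoid lines above and below), so the Brownian-limit modulus of continuity of $\eta_n$ says nothing about it; invoking the Gibbs property to treat $\scrA_2^n$ as a conditioned bridge brings you back to the same circularity one level down.

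The paper sidesteps the probability ratio entirely by passing to expectations with a single comparison bridge. Pick $(X_n,Y_n)\in\bar{\scrA_2^n}$ with $Y_n$ maximal and fix a truncation level $m$. Let $B^n$ be an \emph{unconditioned} random walk bridge from $(X_n, Y_n\wedge m)$ to whichever of $(a,\scrA_1^n(a)\wedge m)$ or $(b,\scrA_1^n(b)\wedge m)$ is farther in the first coordinate. Since $Y_n\wedge m\le Y_n\le\scrA_1^n(X_n)$ and the other endpoint is truncated below $\scrA_1^n$, monotonicity gives that $\scrA_1^n$ stochastically dominates $B^n$ on the domain of $B^n$, which contains the midpoint $c=(a+b)/2$; couple so that $B^n(c)\le\scrA_1^n(c)$. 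Passing to a joint subsequential limit and using the Brownian limit property, the conditional mean of $B(c)$ is a convex combination $W(Y\wedge m)+(1-W)Y'$ with $W\ge 1/2$ (because $c$ is at least as close to $X$ as to the far endpoint), whence
$$
\E\scrA_1(c)\ \ge\ \E B(c)\ \ge\ \tfrac12\,\E(Y\wedge m)^+ - \E\scrA_2(a)^- - \E\scrA_1(a)^- - \E\scrA_1(b)^-.
$$
The left side and all subtracted terms are finite by finite-dimensional convergence to the Airy line ensemble; letting $m\to\infty$ gives $\E Y<\infty$, so $Y<\infty$ a.s. The strength of this argument is precisely what it never uses: no interior value of $\scrA_2^n$ and no avoidance probability in a denominator. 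In the paper's architecture, Lemma~\ref{L:uni0} is deliberately a weak preliminary bound; the positivity of the avoidance probability you need is proved later (Lemma~\ref{L:positive}), using this lemma together with the separation statement of Lemma~\ref{L:uni1}, and cannot be obtained in one step.
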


\begin{proof}
Let $(X_n,Y_n)\in \bar \scrA_2^n$ be such that $Y_n$ is maximal (to ensure measurability of $X_n$, we can specify that $X_n \le x$ for any other $x$ with $(x, Y_n) \in \scrA_2^n$). Fix $m \in \N$. Next, let $B^n$ be a bridge drawn from the random nonintersecting bridge measure $\eta_n(a-1,\scrA_1^n(a-1); b+1,\scrA_1^n(b+1); D_n)$, where $D_n$ is the downward closed set $\{(X_n, y) \in \R^2 : y \le Y_n \wedge m \}$.

Conditionally on $\scrA_2^n$, $\scrA_1^n(a-1)$, and $\scrA_1^n(b+1)$, the distribution of $\scrA_1^n|_{[a-1, b + 1]}$ is a random walk bridge conditioned to avoid all of $\scrA^n_2|_{[a-1, b + 1]}$. In particular, by the monotonicity property of bridges, $\scrA^n_1$ stochastically dominates $B^n$, since $D_n \sset \bar \scrL^n_2$.
Therefore we can couple $B^n$ together with $\scrA^n_1$, $X_n, Y_n, D_n$ so that $\scrL^n_1 \ge B^n$ on $[a-1, b+1]$.  Note that the monotonicity property of bridges applies is defined for a fixed admissible configuration $(e, D)$ whereas here we work with a \textit{random} admissible configuration. We can deal with this without any technical issues since our random admissible configuration can only take on countably many values.

We take a joint subsequential limit in distribution of $\scrA^n_1(a-1)$, $\scrA^n_1(b+1)$, $X_n$, $D_n$ to get $\scrA_1(a-1)$, $\scrA_1(b+1)$, $X$, $D^*$. By Lemma \ref{L:eta-cvg}, along this subsequence $B^n$ converges in distribution to a Brownian bridge $B$ drawn from the random nonintersection measure $\nu^1(a-1, \scrA_1(a-1); b+1, \scrA_1(b+1); D^*)$. To apply this lemma, we have used that $(a-1, \scrA_1(a-1); b+1, \scrA_1(b+1); D^*)$ is a $1$-admissible configuration for $\nu$ since $D^*$ is bounded above and contained in the set $\{X\} \X \R \sset [a, b] \X\R$. In particular, this means that $B^n$ is tight, so by passing to a possible further subsequence we can take a joint limit of the larger collection of random variables
 $\scrA^n_1(a-1)$, $\scrA^n_1(b+1)$, $\scrA^n_1(c), \scrA^n_2(a)$, $X_n, Y_n, D_n, B^n$ to get $\scrA_1(a-1)$, $\scrA_1(b+1)$, $\scrA_1(c)$,$\scrL_2(a)$,$X$,$Y, D^*, B$ where a priori $Y$ may take the value $\infty$ and here $c := (a + b)/2$,

Since $\scrL^n_1 \ge B^n$ on $[a-1, b+1]$ almost surely for every $n$, amost surely $\scrA_1(c) \le B(c)$.
Hence
$$
\E\scrA_1(c)\ge\E B(c)= \E\big[\E[B(c)\mid B(X), \scrA_1(a-1),\scrA_1(b+1)]\big].
$$ 
Conditionally on $B(X), \scrA_1(a-1),\scrA_1(b+1)$, since $D^* \sset \{X\} \X \R$ the bridge $B$ just consists of two unconditioned Brownian bridges from $(a, \scrA_1(a-1))$ to $(X, B(X))$ and from $(X, B(X))$ to $(b+1, \scrA_1(b+1))$ .
The expectation of a Brownian bridge is a linear function connecting its endpoints. Thus the conditional expectation above is given by a convex combination $W B(X) +(1-W)Y'$, where $W\ge 1/2$ and $Y'$ is either $\scrA_1(a-1)$ or $\scrA_1(b-1)$. The maximality of $Y_n$ implies $Y \geq \scrA_2(a)$. Also, $B(X) \ge Y \wedge m$. Therefore
\begin{eqnarray*}
WB(X) +(1-W)Y'&\ge& W(Y \wedge m)^+ - W(Y\wedge m)^- -(1-W)Y'^-\\&\ge& \frac{1}{2}(Y \wedge m)^+ - (\scrA_2(a)\wedge m)^- -Y'^-
\\&\ge&\frac{1}{2}(Y \wedge m)^+ - \scrA_2(a)^-- \scrA_1(a-1)^--\scrA_1(b+1)^-
\end{eqnarray*}
So we get
$$
\E\scrA_1(c)\ge \E[(Y\wedge m)^+/2]-\E[\scrA_2(a)^-]-\E[\scrA_1(a-1)^-]- \E[\scrA_1(b+1)^-].
$$
The terms involving $\scrA_1,\scrA_2$ above have finite expectation since $\scrA_i(x) + x^2, i \in \N$ is an Airy point process for any $x \in \R$. Taking  $m\to\infty$ we conclude $\E Y^+<\infty$, so $Y<\infty$ a.s.
\end{proof}

\begin{lemma}[Separation]\label{L:uni1}
 Let $k\in \mathbb N$. Any joint subsequential distributional limit $(\mathcal E_k\scrA, D), \scrA_k\left(\frac{a+b}{2}\right)$ of  $(\mathcal E_k\scrA^n, \bar{\scrA}^n_{k+1}), \scrA^n_k\left(\frac{a+b}{2}\right)$  satisfies the following a.s.
 \begin{enumerate}[label=(\Roman*)]
     \item $\scrA_1(t)>\scrA_2(t)>\cdots>\scrA_k(t)$ for $t\in\{a,b\}$,
     \item  $\max \{y:(x,y)\in D\}<\infty$,
     \item  $\left(\frac{a+b}{2},\scrA_k\left(\frac{a+b}{2}\right)\right) \notin D$,
     \item  $\mathcal E_k\scrA\cap D=\emptyset$.
 \end{enumerate}
In particular, $(\mathcal E_k\scrA, D)$ is almost surely a $k$-admissible configuration for $\nu$.
\end{lemma}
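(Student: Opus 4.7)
The plan is to verify each of the four properties in turn, treating (III) as the main technical step; (I), (II), and (IV) will then follow by comparatively simpler arguments.

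Claim (I) is immediate from finite-dimensional distributional convergence $\scrA^n\Rightarrow \scrA$ together with the fact that the Airy line ensemble has $\scrA_1(t)>\cdots>\scrA_k(t)$ almost surely at each fixed $t$ (Definition~\ref{D:airy-line}). For (II), I use the pointwise domination $\scrA^n_{k+1}\le \scrA^n_2$, which yields $\bar\scrA^n_{k+1}\subseteq \bar\scrA^n_2$. Passing to a further joint subsequence so that $\bar\scrA^n_2$ also converges in distribution to a closed set $D_2$, Hausdorff limits preserve inclusion, so $D\subseteq D_2$; Lemma~\ref{L:uni0} applied to $D_2$ yields the desired finite supremum.

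The substantive step is (III). Let $c=(a+b)/2$. Finite-dimensional convergence together with the almost sure strict ordering at $c$ of the Airy line ensemble gives that, for some small $\delta>0$, the gap $\scrA^n_k(c)-\scrA^n_{k+1}(c)$ exceeds $3\delta$ with high probability in $n$. To conclude $(c,\scrA_k(c))\notin D$ I must control $\scrA^n_{k+1}$ not merely at $c$ but throughout a window $[c-\varepsilon,c+\varepsilon]$, ruling out an upward spike of $\scrA^n_{k+1}$ near $c$ that could be captured by the Hausdorff limit. The plan is to apply the Gibbs property to the top $k+1$ lines on this window: conditionally on the exterior data, these form a nonintersecting bridge ensemble in $\eta$ avoiding $\bar\scrA^n_{k+2}$, so $\scrA^n_{k+1}|_{[c-\varepsilon,c+\varepsilon]}$ is a bridge whose endpoints $\scrA^n_{k+1}(c\pm\varepsilon)$ converge in law to $\scrA_{k+1}(c\pm\varepsilon)$, which are within $\delta$ of $\scrA_{k+1}(c)$ for small $\varepsilon$ by continuity of the Airy lines. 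An upper bound on $\scrA^n_{k+1}$ is obtained by dropping the ceiling constraint from $\scrA^n_k$ (which can only increase $\scrA^n_{k+1}$), reducing the problem to controlling the supremum of a single $\eta^1_n$-bridge with floor $\bar\scrA^n_{k+2}$. The conditional tail then satisfies $P(\sup > M) \le P_{\mathrm{unc}}(\sup > M)/P_{\mathrm{unc}}(B \ge \bar\scrA^n_{k+2})$, where $P_{\mathrm{unc}}$ is the unconditioned bridge law; the Brownian limit property controls the numerator as $O(\sqrt{\varepsilon})$, while the denominator is bounded below by the fdd gap between $\scrA_{k+1}$ and $\scrA_{k+2}$. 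Combining, for small $\varepsilon$ one gets $\sup_{|x-c|\le\varepsilon}\scrA^n_{k+1}(x)<\scrA_k(c)-\delta$ with high probability, excluding $(c,\scrA_k(c))\in D$. The main obstacle will be making the lower bound on the non-crossing probability uniform in $n$; I expect this to come from joint fdd convergence and a careful choice of the window.

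Finally, (IV) follows by applying the argument of (III) at the endpoints $a$ and $b$. By (I) and the downward-closedness of $D$, the whole set $\mathcal E_k\mathcal A$ is disjoint from $D$ provided $(a,\scrA_k(a))\notin D$ and $(b,\scrA_k(b))\notin D$. These exclusions are proved by the same coupling and bridge-regularity argument on one-sided windows $[a,a+\varepsilon]$ and $[b-\varepsilon,b]$, using the fdd-derived gaps $\scrA_k(a)>\scrA_{k+1}(a)$ and $\scrA_k(b)>\scrA_{k+1}(b)$.
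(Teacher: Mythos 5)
Your treatment of (I) matches the paper: convergence of fixed-time marginals plus the a.s.\ strict ordering of the Airy lines. Your treatment of (II) is also the same idea as the paper's one-line remark: $\bar\scrA^n_{k+1}\subset\bar\scrA^n_2$ by monotone ordering, so Lemma~\ref{L:uni0} applies.

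Your argument for (III), however, has a genuine gap, and you have correctly identified it yourself: the uniform-in-$n$ lower bound on the non-crossing probability $P^n_{\mathrm{unc}}(B\ge\bar\scrA^n_{k+2})$ that you need for the tilting inequality $P(\sup>M\mid\text{avoid})\le P_{\mathrm{unc}}(\sup>M)/P_{\mathrm{unc}}(\text{avoid})$. This is not a small detail to be cleaned up. To control $\E[P_{\mathrm{unc}}(\sup>M)/P_{\mathrm{unc}}(\text{avoid})]$ you would need the random variable $P^n_{\mathrm{unc}}(\text{avoid})$ to be bounded away from $0$ in probability, uniformly in $n$. That statement is precisely the content of Lemma~\ref{L:positive} (with the index shifted by one), and in the paper Lemma~\ref{L:positive} is proved \emph{from} part (IV) of the present lemma. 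So the route you propose is circular, or at best forces you to reprove a uniform non-crossing lower bound by other means for indices $k+1, k+2,\ldots$ with no obvious base case. The quantitative claim $P_{\mathrm{unc}}(\sup>M)=O(\sqrt\varepsilon)$ is also not right (the tail of the bridge maximum is Gaussian in $\delta^2/\varepsilon$), but that is cosmetic compared to the structural issue.

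The paper avoids this by arguing in the opposite direction: instead of pushing $\scrA^n_{k+1}$ \emph{down} (which requires an upper tail bound that secretly needs tightness), it pushes $\scrA^n_k$ \emph{up} using monotonicity. Concretely, one compares $\scrA^n_1,\ldots,\scrA^n_k$ to the \emph{softer} ensemble $B^n$ of $k$ bridges with the same endpoints $\mathcal E_k\scrA^n$ conditioned to avoid each other and the restricted set $\bar\scrA^n_{k+1}\cap S$, where $S$ is the middle third of $I$ times $\bar\R$. Monotonicity gives a coupling with $B^n_j\le\scrA^n_j$ pointwise. Passing to a joint subsequential limit, the Brownian limit property makes $B$ a collection of Brownian bridges conditioned to avoid $D\cap S$; this conditioning is well-posed because its probability is positive, and that positivity follows from (I) and (II) alone (ordered endpoints and a bounded obstacle strictly in the interior), not from anything that would be circular. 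Since $B_k$ avoids $D\cap S$, the point $(c,B_k(c))\notin D$, and downward-closedness of $D$ plus $\scrA_k(c)\ge B_k(c)$ gives $(c,\scrA_k(c))\notin D$. Note that positivity of the non-crossing event is only ever needed for the \emph{Brownian} limit, never uniformly along the prelimit sequence — that is the point.

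Finally, for (IV) the paper does not re-run the argument on one-sided windows (which would inherit the same gap from your (III)); it simply applies part (III) to the enlarged intervals $[2a-b,b]$ and $[a,2b-a]$ so that $a$ and $b$ become midpoints, then uses downward-closedness of $D$ and (I) to cover all of $\mathcal E_k\mathcal A$. This is both simpler and avoids having to set up a Gibbs resampling window that touches the boundary of $I$.
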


\begin{proof} (I) This follows from the convergence of fixed-time measures and the fact that the Airy lines do not intersect at a fixed time.

(II) For $k=1$, this is exactly Lemma \ref{L:uni0}, and for larger $k$ it follows from  monotonicity.

(III) For notational ease, we denote the midpoint $c = \frac{a+b}{2}$. Also let  $S = [\frac{2}{3}a +\frac{1}{3}b, \frac{1}{3}a + \frac{2}{3}b]\times \bar{\mathbb R}$ be the middle third of $[a,b]$ times $\bar{\mathbb R}$.

The  monotonicity property implies that given $\bar{\scrA}_{k+1}^n$ and $\mathcal E_k\scrA^n$, the conditional distribution of $\scrA_1^n,\ldots, \scrA_k^n$ given $\mathcal E_k \scrA^n, \bar{\scrA}^n_{k+1}$ stochastically dominates $k$ random walk bridges $B^n$ drawn from the random nonintersecting bridge measure $\eta^k_n(\mathcal E_kA^n, \bar \scrA^n_{k+1}\cap S)$. In other words, we can couple random continuous functions $B^n = (B^n_1, \dots, B^n_k)$ with $\scrA_1^n,\ldots, \scrA_k^n, \mathcal E_k \scrA^n, \bar{\scrA}^n_{k+1}$ so that given $\mathcal E_k \scrA^n, \bar{\scrA}^n_{k+1}$, we have $B^n \sim \eta^k_n(\mathcal E_kA^n, \bar \scrA^n_{k+1}\cap S)$ and almost surely on $[a, b]$ and for all $i \in \{1, \dots, k\}$ we have $B^n_i \le \scrA^n_i$.


We take a joint subsequential distributional limit of $\mathcal E_k \scrA^n$, $\bar{\scrA}^n_{k+1}$ to get the limit $\mathcal E_k \scrA$, $D$. By (I), the endpoints $\mathcal E_k \scrA$ are separated. Moreover, the set $D \cap S$ is separated from the ends of the interval $[a, b]$, and since $D$ is bounded above by (III), $(\mathcal E_k \scrA, D \cap S)$ is a $k$-admissible configuration. Therefore by Lemma \ref{L:eta-cvg}, along this subsequence $B^n$ converges in distribution jointly with $\mathcal E_k \scrA^n, \bar{\scrA}^n_{k+1}$ to $k$ Brownian bridges $B$ drawn from the random measure $\eta^k(\mathcal E_k \scrA, D \cap S)$. In particular $(c,B_k(c))\notin D\cap S$ a.s. 

Finally consider a further joint subsequential distributional limit $\mathcal E_k \scrA$, $D$, $B$, $\scrA_k(c)$  of $\mathcal E_k \scrA^n$, $\bar{\scrA}^n_{k+1}$, $B^n$, $\scrA^n_k(c)$. Since $D$ is downward closed and almost surely $(c,B_k(c))\notin D\cap S$ and $B^n_k(c) \le \scrA^n_k(c)$ for all $n$, we have $(c,\scrA_k(c))\notin D$ almost surely as well.

(IV) It is enough to show that both $(a, \scrA_k(a)) \notin D$ and $(b, \scrA_k(b)) \notin D$ almost surely. This follows from the conclusion (III) applied to the enlarged intervals $[2a-b,b]$, which has $a$ as its midpoint, and $[a,2b-a]$ which has $b$ as its midpoint.

For the final claim about $k$-admissibility, observe that by (I), (II), and (IV) almost surely we can find $\ep > 0$ and continuous functions $f_1, \dots, f_k:[a, b] \to \R$ such that 
\begin{itemize}[nosep]
	\item $f_i(t) = \scrA_i(t)$ all $i \in \{1, \dots, k\}$ and $t \in \{a, b\}$ 
	\item $\inf \{ |f_i(t) - f_{i+1}(t)| : i \in \{1, \dots, k-1\}, t \in [a, b] \} > 2\ep$
\item The graph of $f_k - \ep$ does not intersect $D$.
\end{itemize}
Let $U_{f, \ep}$ be the set of $k$-tuples of continuous functions $g$ satisfying $\|g_i - f_i\|_\infty < \ep$ for all $i$. Then $U_{f, \ep} \sset V(D)$ and $\nu^k(E)(U_{f, \ep}) > 0$ almost surely, yielding $k$-admissibility.
\end{proof}

\begin{proof}[Proof of Theorem \ref{T:uniform}]
	
Let $\mu_n$ be the joint law of $\mathcal E_k \scrA^n, \bar{\scrA}^n_{k+1}, X$, where conditionally on $\mathcal E_k \scrA^n, \bar{\scrA}^n_{k+1}$, we have $X \sim \eta_n^k(\mathcal E_k \scrA^n)$ and $X$ is independent of $\mathcal L^n$ given $\scrE_k \scrL^n$. Since $\scrE_k \scrL^n$ is tight by assumption (i) of the theorem, $X$ is tight in the uniform topology by the Brownian limit property of $\eta_n$. Since $\bar{\scrA}^n_{k+1}$ is a random variable in a compact topology -- the Hausdorff topology on $I \X \bar \R$ where $I = [a, b]$ -- the whole sequence $\mu_n$ is tight.

Let $\mu^*_n$ be the joint law of $\mathcal E_k \scrA^n$, ${\bar \scrA}_{k+1}^n$, $(\scrA_1^n,\ldots \scrA_k^n)|_I$. Then  $\mu^*_n$ is absolutely continuous with respect to $\mu_n$ with Radon-Nikodym derivative
$$
\frac{d\mu^*_n}{d\mu_n}(e,D,x)=\frac{1}{\eta^k_n(e)(V(D))}1(x \in V(D)).
$$
To prove the theorem, we just need to show that $\mu^*_n$ is tight since this implies tightness of its marginal $(\scrA_1^n,\ldots \scrA_k^n)|_I$. Since $\mu_n$ is tight, it suffices to show that any subsequential distributional limit of the sequence $\eta_n^k(\mathcal E_k \scrA^n)(V(\bar \scrL^n_{k+1})), n \in \N$ is positive almost surely. Indeed, this will imply that for any $\ep > 0$ we can find a $\delta > 0$ such that $\eta_n^k(\mathcal E_k \scrA^n)(V(\bar \scrL^n_{k+1})) > \de$ with probability at least $1 - \ep/2$ for all $n$, and hence for any set $A$ we would have
$$
\mu^*_n(A) < \mu_n(A)/\de + \ep/2.
$$
Therefore letting $K_\ep$ be a compact set such that $\mu_n(K_\ep^c) \le \ep\de/2$ for all $n$ we have that $\mu^*_n(K_\ep^c) < \ep$ for all $n$.

Let $(\mathcal E_k\scrA, D)$ be a joint subsequential limit of $(\mathcal E_k\scrA^n, \bar{\scrA}^n_{k+1})$. The pair $(\mathcal E_k\scrA, D)$ is almost surely a $k$-admissible configuration for $\nu$ by Lemma \ref{L:uni1}, so $\nu^k(E_k\scrA)(V(D)) > 0$ almost surely. By Lemma \ref{L:eta-cvg}, $\eta_n^k(\mathcal E_k \scrA^n)(V(\bar \scrL^n_{k+1})) \cvgd \nu^k(\mathcal E_k\scrA)(V(D))$ along this subsequence, giving the desired positivity.
\end{proof}

\subsection{The Bernoulli case}\label{ss:Bernoulli}

We apply Theorem \ref{T:uniform} in conjunction with the results of Section \ref{S:FDD} to prove Theorem \ref{T:main-walk}.

\begin{proof}[Proof of Theorem \ref{T:main-walk}]
Nonintersecting Bernoulli random walks, scaled according to Theorem \ref{T:main-walk}, converge to the parabolic Airy line ensemble in the finite dimensional distribution sense if and only if $\chi_n \to \infty$; this is the content of Section \ref{S:FDD}. 

In order to use Theorem \ref{T:uniform} we consider the piecewise linear continuous versions of the walk, as opposed to the step function versions. 

Assumption (ii) of Theorem \ref{T:uniform} holds with the rescaled family $\eta$ of Bernoulli bridges given in Example \ref{E:Bernoulli-walks}. The required Gibbs property for assumption (ii) follows immediately from the nonintersection condition. 
An application of Theorem \ref{T:uniform} concludes the proof for the piecewise linear versions. Convergence of the step function versions follows. 
\end{proof}

\section{The geometric environment}\label{S:LPP-geometric}

In this section, we relate nonintersecting geometric random walks to last passage percolation defined in terms of independent geometric random variables. We then use this connection to translate Theorem \ref{T:main-walk} to get Theorem \ref{T:main-lpp}.

The connection is a version of the Robinson-Schensted-Knuth (RSK) correspondence, and is described in terms of last passage percolation with several paths. This approach to RSK, called Greene's theorem, avoids  Young diagrams, Young tableaux and insertion procedures, which are not essential for understanding last passage percolation.

\subsection*{Definition of last passage percolation in a discrete lattice}

\begin{definition} \label{D:LPP-discrete}
	Given nonnegative numbers $(W_{i,j}; \ i,j\in \mathbb{N})$ we define the {\bf last passage value in $W$} to a point $(m, n) \in \N \X \N$ by:
	$$
	L_{n, 1}(m):= \max_{\pi} \sum_{ (a,b) \in \pi} W_{a, b},
	$$
	where the maximum is taken over all possible lattice paths $\pi = (\pi_1, \ldots, \pi_\ell) \in (\mathbb{N} \times \mathbb{N})^\ell$ starting at $(1,1)$ and ending at $(m,n)$ which are of minimal length $\ell = m + n - 1$.  More generally, for any $k \in \mathbb N$, define the {\bf last passage value over $k$ vertex-disjoint paths} by
	
	\begin{equation}
	\label{D:LPP-k}
	L_{n, k}(m) :=
	\max_{\pi^{(1)}, \pi^{(2)}, \ldots, \pi^{(k)}}  \sum_{p=1}^k\sum_{ (a,b) \in \pi^{(p)}} W_{a,b},
	\end{equation}
	where the maximum now is taken over all possible $k$-tuples of \emph{disjoint} minimal length lattice paths, where  the $p$-th path $\pi^{(p)}$, $1\leq p \leq k$, starts at $(1,p)$ and ends at $(m,n-k+p)$. In the case that there are no such $k$-tuples of non-overlapping paths (this happens when $k>\min{(m,n)}$), then we take the convention that $L_{n, k}(m) := L_{n, \min{(m,n)}}  (m) = \sum_{a=1}^m\sum_{b=1}^n W_{a,b} $. We will also set $L_{n,0}=0$.

	\begin{figure}
		\begin{center}
			\begin{tikzpicture}[scale=0.5]
			\draw[thick] (0.5,1.5)--(1.5,1.5)--(1.5,3.5)--(4.5,3.5);
			\draw[thick] (0.5,0.5)--(4.5,0.5)--(4.5,2.5);
			\draw (0,3) rectangle (1,4) node[pos=.5] {0};
			\draw (1,3) rectangle (2,4) node[pos=.5] {3};
			\draw (2,3) rectangle (3,4) node[pos=.5] {0};
			\draw (3,3) rectangle (4,4) node[pos=.5] {3};
			\draw (4,3) rectangle (5,4) node[pos=.5] {0};
			
			\draw (0,2) rectangle (1,3) node[pos=.5] {0};
			\draw (1,2) rectangle (2,3) node[pos=.5] {0};
			\draw (2,2) rectangle (3,3) node[pos=.5] {1};
			\draw (3,2) rectangle (4,3) node[pos=.5] {1};
			\draw (4,2) rectangle (5,3) node[pos=.5] {0};
			
			\draw (0,1) rectangle (1,2) node[pos=.5] {0};
			\draw (1,1) rectangle (2,2) node[pos=.5] {1};
			\draw (2,1) rectangle (3,2) node[pos=.5] {0};
			\draw (3,1) rectangle (4,2) node[pos=.5] {0};
			\draw (4,1) rectangle (5,2) node[pos=.5] {2};
			
			\draw (0,0) rectangle (1,1) node[pos=.5] {0};
			\draw (1,0) rectangle (2,1) node[pos=.5] {0};
			\draw (2,0) rectangle (3,1) node[pos=.5] {0};
			\draw (3,0) rectangle (4,1) node[pos=.5] {4};
			\draw (4,0) rectangle (5,1) node[pos=.5] {0};
			\end{tikzpicture}
		\end{center}
		\caption{The paths $L_{n,k}(m)$ in Definition \ref{D:LPP-discrete} for $n=5, m=4, k=2$.  $W$ is indexed by a quadrant with the \emph{bottom left corner} being $(1,1)$.}
		\label{fig:LPP_stab}
	\end{figure}
\end{definition}

\subsection*{Nonintersecting geometric random walks}

\begin{definition} Consider  a function $f$ which may have jump discontinuities. Let the zigzag graph
	$$\operatorname{graph}^z(f)\subset \mathbb{R}^2$$ of $f$ be the graph of $f$ with each jump discontinuity  straddled by a vertical line segment. We extend this definition to functions $F:\mathbb N\to \mathbb R$ by setting graph$^z(F)=$ graph$^z(F(\floor \cdot))$.
	
	A {\bf geometric  random variable with odds $\beta$} takes the value $k$ with probability $\beta(1+\beta)^{-1-k}$ for $k=0,1,\ldots$. Note that the mean is $1/\beta$.
	
	An ensemble $P_n$ of $n$ \textbf{nonintersecting geometric  walks of odds $\beta$} is a collection of independent random walks $P_{n,i}$ having geometric increments of odds $\beta$,  $P_{n,i}(0)=1-i$ and conditioned to have nonintersecting zigzag graphs. Note that in this case, the nointersecting condition is equivalent to requiring that for all $1\leq i\leq n$ and $t \in \mathbb{N}$,
	$$P_{n, i}(t) < P_{n,i-1}(t-1).$$
	Since the nonintersection event has probability zero, the conditioning must be carried out by taking the $m \to \infty$ limit of conditioning on nonintersection up to time $m$, see \cite{konig2002non} for more details.
\end{definition}

\begin{theorem}[\cite{o2003conditioned}]\label{T:geo_to_L}
	Let $(W_{i,j}; \,i,j\in \mathbb N)$ be independent geometric random variables with odds $\beta$.
	
	Fix $n\in \mathbb N$ and let $L_{n, k}(m)$ be the last passage value across $W$ as in Definition \ref{D:LPP-discrete}.
	
	Let $P_{n, i}$ be a collection of $n$ nonintersecting geometric walks of odds $\beta$.
	
	Then we have the equality in distribution, jointly over all $1 \le k \le n$:
	$$
	P_{n, k}(\cdot) + k - 1 \stackrel{d}{=} L_{n, k}(\cdot) - L_{n, k-1}(\cdot).
	$$
\end{theorem}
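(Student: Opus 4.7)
\noindent\textit{Proof plan for Theorem \ref{T:geo_to_L}.}
My strategy is to combine the Robinson--Schensted--Knuth (RSK) correspondence with Greene's theorem to identify each difference $L_{n,k}(m)-L_{n,k-1}(m)$ as a row length of a random Young diagram, and then to match the evolution of those row lengths in $m$ with the law of nonintersecting geometric walks via a Karlin--McGregor / Lindstr\"om--Gessel--Viennot argument.

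First, I would apply column-insertion RSK to the array $(W_{i,j})_{i\le m,\ j\le n}$, producing a pair of semistandard Young tableaux with common shape $\lambda(m)=(\lambda_1(m),\lambda_2(m),\ldots)$. Greene's theorem gives
\[
\lambda_1(m)+\lambda_2(m)+\cdots+\lambda_k(m)\;=\;L_{n,k}(m),
\]
since the right-hand side is exactly the maximum total weight of $k$ disjoint minimal up-right paths with the prescribed endpoints. Consequently $\lambda_k(m)=L_{n,k}(m)-L_{n,k-1}(m)$, and the theorem reduces to showing that $(\lambda_k(m)-(k-1))_{k=1}^n$ has, jointly in $k$ and $m$, the same distribution as $(P_{n,k}(m))_{k=1}^n$.

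Second, I would view the shape process $m\mapsto\lambda(m)$ as a Markov chain on partitions with at most $n$ parts: inserting the new column $(W_{m+1,j})_{j=1}^n$ (with i.i.d.\ geometric entries of odds $\beta$) turns $\lambda(m)$ into a vertical-strip extension $\lambda(m+1)$ with an explicit Pieri-type transition weight whose factors are geometric probabilities. On the walk side, Karlin--McGregor expresses the law of $(P_{n,1}(m),\ldots,P_{n,n}(m))$ as the Doob $h$-transform of $n$ independent geometric walks, with $h$ a Vandermonde-type determinant. Under the identification $P_{n,k}(m)=\lambda_k(m)-(k-1)$, which makes the initial conditions and the strict/weak ordering conventions match, the proof is completed by checking that the two one-step transition kernels agree; this amounts to an identity between a Schur-polynomial Pieri expansion and a Karlin--McGregor determinant.

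The main obstacle is this explicit kernel match: doing it by brute force is combinatorially delicate, especially because the shape process uses vertical-strip Pieri weights while the walks naturally live in the linear combinatorics of independent geometric increments. A cleaner route, following \cite{o2003conditioned} and building on \cite{o2002representation,konig2002non}, is to bypass computing the shape kernel altogether and instead construct a pathwise coupling: build $n$ walks from $W$ by nested local moves (a discrete, multiplicative analogue of the geometric RSK transformations) so that the resulting walks are manifestly geometric and nonintersecting, while their values at time $m$ recover the row lengths $\lambda_k(m)$ of the RSK shape after the $(k-1)$-shift. With such a coupling in place, Greene's theorem at each fixed $m$ together with the fact that the local moves preserve the geometric product measure yields the joint equality in distribution across all $1\le k\le n$ in a single step.
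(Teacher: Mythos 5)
Your proposal is correct and follows essentially the same route as the paper: apply RSK to the weight array, invoke Greene's theorem to identify $\lambda_k(m)=L_{n,k}(m)-L_{n,k-1}(m)$, and then cite \cite{o2003conditioned} (together with \cite{konig2002non}) for the identification of the shape process with nonintersecting geometric walks shifted by $k-1$. The extra material you sketch about matching Pieri-type transition kernels with a Karlin--McGregor Doob transform is a reasonable description of what underlies the cited result, but the paper, like your final recommendation, simply defers this to Corollary 4.8 of \cite{o2003conditioned} rather than re-deriving it.
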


\begin{proof}[Proof of the version used in this paper]
	The proof goes by applying the RSK bijection to the array $W$.  Precisely, for any $m \in \mathbb{N}$, if we apply the RSK bijection to $\{ W_{i,j} : 1\leq i \leq m, 1\leq j \leq n\}$, then the length, $\lambda_k(m)$, of the $k$-th row of the resulting Young tableaux has the following two properties:
	
	(1) $\lambda_k(m) = L_{n, k}(m) - L_{n, k-1}(m)$.  This is Greene's theorem, see \cite{sagan2013symmetric}.
	
	(2) The laws of $\{ \lambda_k(\cdot) \}_{1\leq k \leq n}$ and $\{ P_{n, k}(\cdot)+k-1 \}_{1 \leq k \leq n}$ are the same. In fact, this law is given by a certain Doob transform of the unconditioned walks; see Corollary 4.8. in \cite{o2003conditioned}.
\end{proof}

Nonintersecting geometric walks are also known as the Meixner ensemble.

\subsection*{Translation between geometric and Bernoulli random walks}

In this section, we map nonintersecting geometric walks to nonintersecting Bernoulli walks so that Theorem \ref{T:main-walk} can be applied to conclude that the top edge of nonintersecting geometric random walks also converges to the parabolic Airy line ensemble.  The connection between two ensembles is a simple shear transformation. In the case of a single independent random walk, this is self-evident from the relationship between geometric and Bernoulli random variables; in the case of nonintersecting walks the result is still intuitive.

\begin{theorem}
	\label{T:geom-walk} Use the setup of Theorem \ref{T:main-lpp}. For each $n$,  consider $n$ nonintersecting geometric  walks $P_{n, i}, i \in \{1, \dots, n\}$ of odds $\beta_n$.
	Then the following statements are equivalent:
	\begin{enumerate}[label=(\roman*)]
		\item
		\begin{equation}
		\label{E:geom-conds}
		n\to\infty, \qquad m_n\to\infty, \qquad \frac{nm_n}{\beta_n}\to\infty.
		\end{equation}
		\item The rescaled top walks near $m_n$ converge to the parabolic Airy line ensemble $\scrA$:
		$$\frac{(P_{n,k}-h_n)(m_n+\lfloor \tau_{n}t\rfloor)}{\chi_n}\Rightarrow \scrA_k(t).
		$$
	\end{enumerate}
\end{theorem}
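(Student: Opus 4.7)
The plan is to shear the nonintersecting geometric walks into nonintersecting Bernoulli walks and then invoke Theorem~\ref{T:main-walk}. Concretely, apply the map $(t,p) \mapsto (t+p, p)$ to the zigzag graph of each $P_{n,i}$: horizontal segments stay horizontal, and a vertical jump of length $G \ge 0$ at time $t$ (a geometric increment) becomes a diagonal up-right segment of horizontal length $G$. Reading the image at each integer horizontal coordinate produces a Bernoulli walk, since the image is flat on the horizontal pieces and moves up by one on each unit subinterval of a diagonal. After an $i$-dependent shift that translates the sheared starting point $(1-i, 1-i)$ to $(0, i-1)$, the resulting walks $\tilde X_{n,i}$ are precisely $n$ Bernoulli walks with odds $\beta_n$ in the setup of Theorem~\ref{T:main-walk}. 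The zigzag nonintersection condition $P_{n,i}(t) < P_{n,i-1}(t-1)$ says exactly that the zigzag graphs are disjoint as subsets of $\mathbb{R}^2$, which under the shear becomes disjointness of the Bernoulli lattice paths, equivalent to the strict condition $\tilde X_{n,i}(s) < \tilde X_{n,i+1}(s)$ at every integer $s$.

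Under the shear, the Bernoulli time axis is $s = t + P(t)$, so on average the bottom Bernoulli walk reaches height $m$ at Bernoulli time $g(m) + m$. The algebraic identity $(g(m) + m)\beta = (\sqrt{m(1+\beta)} + \sqrt n)^2$ then yields $\gamma(g(m)+m) = m$; equivalently, $\gamma$ and $m \mapsto g(m)+m$ are functional inverses. Differentiating twice at $s = g(m)+m$ produces $\gamma'(s) = 1/(g'(m)+1)$ and $\gamma''(s) = -g''(m)/(g'(m)+1)^3$. Plugging these into \eqref{E:tau-xi-bern} and comparing with \eqref{E:tau-xi-geom} gives the clean shear-consistent identities $\tau_n^{\mathrm{bern}} = (g'+1)\tau_n$ and $\chi_n = (g'+1)\chi_n^{\mathrm{bern}}$, exactly what we expect geometrically: a Bernoulli time window of length $\tau_n^{\mathrm{bern}}$ corresponds to a geometric time window of length $\tau_n^{\mathrm{bern}}/(g'+1) = \tau_n$, and Bernoulli spatial fluctuations of size $\chi_n^{\mathrm{bern}}$ become geometric fluctuations of size $\chi_n^{\mathrm{bern}}/\gamma' = (g'+1)\chi_n^{\mathrm{bern}} = \chi_n$ via the local slope $\gamma'$. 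Applying Theorem~\ref{T:main-walk} to the sheared ensemble at Bernoulli time parameter $s_n = g(m_n)+m_n$ and undoing the shear then transfers uniform convergence to the Airy line ensemble back to the geometric walks with the scaling $\tau_n, \chi_n, h_n$ given in Theorem~\ref{T:main-lpp}.

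It remains to verify that condition (i) of Theorem~\ref{T:geom-walk}, namely $n, m_n \to \infty$ and $nm_n/\beta_n \to \infty$, is equivalent to $\chi_n^{\mathrm{bern}} \to \infty$. Setting $u_n = \sqrt{n(1+\beta_n)/m_n}$ and using the chain-rule identities above, one derives a closed-form expression for $(\chi_n^{\mathrm{bern}})^3$ as a rational function of $m_n$, $u_n$, and $\beta_n$, and the equivalence reduces to a short case analysis in the three regimes $u_n \to 0$, $u_n$ bounded, and $u_n \to \infty$. The main source of friction in the proof will be the setup of the shear itself---choosing the $i$-dependent shifts needed to recover the standard Bernoulli initial conditions and carefully checking that the zigzag condition matches lattice-path disjointness (and hence strict Bernoulli nonintersection at every integer time)---but once this bookkeeping is pinned down, the scaling comparison is a straightforward chain-rule calculation. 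Finally, Theorem~\ref{T:main-lpp} follows immediately by combining Theorem~\ref{T:geom-walk} with the O'Connell identity $P_{n,k}(\cdot)+k-1 \stackrel{d}{=} L_{n,k}(\cdot) - L_{n,k-1}(\cdot)$ from Theorem~\ref{T:geo_to_L}.
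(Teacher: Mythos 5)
Your overall strategy matches the paper's: shear the geometric walks into Bernoulli walks, invoke Theorem~\ref{T:main-walk}, and then undo the shear at the level of the scalings. However, the specific shear you write down is incorrect, and the error is not cosmetic.

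The map you describe is $(t,p)\mapsto(t+p,p)$ (horizontals stay horizontal, verticals become diagonals). Under this map a geometric increment of size $G$ becomes a block of $G$ up-steps followed by one flat step, so the resulting Bernoulli walk has up-probability $1/(1+\beta)$, i.e.\ odds $1/\beta$, not $\beta$. Moreover the sheared arctic curve is $s\mapsto g(\tilde g^{-1}(s))$ with $\tilde g(m)=m+g(m)$, whose second derivative is $g''/(1+g')^3<0$: it is concave, whereas the $\gamma_{n,\beta}$ of Theorem~\ref{T:main-walk} is convex (it describes the bottom walk, and the convergence there is for $h_n-X_{n,k}$, not $X_{n,k}-h_n$). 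So the map you chose does not land you in the hypotheses of Theorem~\ref{T:main-walk} with odds $\beta_n$. The correct shear is $A=\left(\begin{smallmatrix}1&1\\1&0\end{smallmatrix}\right)$, i.e.\ $(t,p)\mapsto(t+p,t)$, which sends verticals to horizontals and horizontals to diagonals; one then gets Bernoulli walks of odds $\beta$ whose arctic curve is the \emph{inverse} of $m\mapsto m+g(m)$. Your subsequent identities $\gamma(m+g(m))=m$, $\gamma'=1/(1+g')$, $\gamma''=-g''/(1+g')^3$ are correct for \emph{that} shear (and contradict your own $(t,p)\mapsto(t+p,p)$), so the body of your scaling computation would go through once the shear is fixed. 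Relatedly, the $i$-dependent shift you introduce to recover the initial condition $(0,1,\dots,n-1)$ is not needed and breaks the time-synchronization of the ensemble; under the correct shear the walk $X_{n,i}$ starts at $(1-i,0)$ and the zigzag nonintersection automatically forces $X_{n,i}(0)=i-1$, matching Theorem~\ref{T:main-walk} with no shift.

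Two smaller stylistic differences: the paper encodes both rescalings and the shear as $3\times 3$ affine matrices $M_n,L_n,B$ and reduces the scaling comparison to checking $M_n^{-1}B^{-1}L_n\to I$, which replaces your chain-rule computation; and for the implication (ii)$\Rightarrow$(i) the paper argues through Theorem~\ref{T:geo_to_L} and Tracy--Widom convergence of the one-point marginal rather than a case analysis on $\chi_n^{\mathrm{bern}}$. Your direct-equivalence plan is plausible but more delicate than you indicate, since the equivalence of (i) in Theorem~\ref{T:geom-walk} with $\chi_n^{\mathrm{bern}}\to\infty$ is not a priori a one-line check in all parameter regimes.

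Finally, your closing remark that Theorem~\ref{T:main-lpp} follows from Theorem~\ref{T:geom-walk} via Theorem~\ref{T:geo_to_L} should be accompanied by the observation (as the paper notes) that the constant offset $(k-1)/\chi_n$ coming from that identity vanishes precisely because $\chi_n\to\infty$ under condition (i).
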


\begin{remark}\label{r:11proof}
	Theorem \ref{T:geom-walk} implies Theorem \ref{T:main-lpp} via Theorem \ref{T:geo_to_L} since under (i) the offset of $(k-1)/\chi_n$ coming from the distributional equality in Theorem \ref{T:geo_to_L}  converges to $0$.
\end{remark}

The geometric walks $P_{n, k}$ are precisely related to nonintersecting Bernoulli random walks by a flip and a shear. Let $A$ be the linear map given by the matrix
$$
A=\left[\begin{array}{cc}
1 & 1\\
1 & 0
\end{array}\right],
$$
Then
$
A [\operatorname{graph}^z(P_{n, k})]
$
is the graph of a function $X_{n, k}:[-i+1, \infty) \to \R$ with the properties that $X_{n, k}(0) = 0$ and that $X_{n, k}$ is linear on any interval $[\ell, \ell + 1]$ for $\ell \in \{-i + 1, - i + 2, \dots, \}$.

The following lemma, explicitly relating nonintersecting Bernoulli and nonintersecting geometric random walks, follows by equation 4.78 in \cite{konig2002non}; see also \cite{johansson2002non} for this result at a fixed time.

\begin{lemma}
	\label{L:geom-bern}
	$X_{n, 1}, \dots, X_{n, n}$ are $n$ nonintersecting Bernoulli random walks of odds $\beta$.
\end{lemma}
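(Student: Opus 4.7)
My plan is to use the fact that $A$ is a bijective linear map of $\mathbb{R}^2$, so the shear preserves disjointness of subsets of the plane. Consequently, nonintersection of the zigzag graphs of the $P_{n,i}$ is the same event as nonintersection of the graphs of the $X_{n,i}$, and the lemma reduces to two pieces: (a) the unconditioned law of each individual $X_{n,i}$ is that of a Bernoulli walk with odds $\beta$, and (b) the global nonintersection event, restricted to the domain $[0,\infty)$, yields the nonintersecting Bernoulli walk ensemble from Section \ref{S:Bernoulli} with the correct initial condition $X_{n,i}(0) = i-1$.

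For (a), I would identify the image under $A$ of the two types of pieces of the zigzag of $P_{n,i}$. A unit horizontal segment from $(m, P_{n,i}(m))$ to $(m{+}1, P_{n,i}(m))$ maps to a diagonal of slope $1$ of unit $s$-length, i.e.\ a single up-step of $X_{n,i}$. A vertical jump of height $G_{i,m} = P_{n,i}(m) - P_{n,i}(m{-}1)$ maps to a horizontal segment of length $G_{i,m}$, i.e.\ $G_{i,m}$ consecutive flat-steps of $X_{n,i}$. Since the $G_{i,m}$ are i.i.d.\ geometric with odds $\beta$, the resulting sequence of unit increments of $X_{n,i}$ at integer $s$-times is i.i.d.\ Bernoulli with up-probability $\beta/(1+\beta)$. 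This is the familiar encoding of runs of coin-flips by geometric waiting times, and identifies $X_{n,i}$ as a Bernoulli walk of odds $\beta$ starting from $(1-i, 0)$.

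For (b), I would first verify that two piecewise linear staircases with slopes in $\{0,1\}$ are disjoint iff their values at all common integer times are strictly ordered: on each unit interval $[\ell, \ell{+}1]$ the difference is linear, so if it is positive at both endpoints it is positive throughout. Then, since $X_{n,i}$ is nondecreasing with unit jumps of size $0$ or $1$, starts at value $0$ at time $1-i$, and reaches time $0$ in exactly $i-1$ steps, necessarily $X_{n,i}(0) \in \{0, 1, \ldots, i-1\}$. The strict ordering $X_{n,1}(0) < \cdots < X_{n,n}(0)$ (together with $X_{n,1}(0) = 0$) therefore forces $X_{n,i}(0) = i-1$ and in fact pins down every step of $X_{n,i}$ on $[1-i, 0]$ to be an up-step. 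This deterministic initial segment has a fixed positive probability under the unconditioned product measure, so the law of $(X_{n,1}, \ldots, X_{n,n})|_{[0,\infty)}$ under nonintersection coincides with $n$ independent Bernoulli walks of odds $\beta$ starting at $X_{n,i}(0) = i-1$ conditioned on strict ordering at all $s \in \mathbb{N}$, which is exactly the ensemble defined in Section \ref{S:Bernoulli}. The conditioning on a measure-zero event is interpreted consistently in both models as a limit of finite-horizon conditionings.

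The main obstacle is not computational but bookkeeping: carefully tracking the correspondence between ``nonintersecting zigzag graphs'' for walks indexed by $t\in \mathbb{N}$ and ``strict ordering at all integer $s \ge 0$'' for walks indexed by $s$, including the resolution of the different starting times $1-i$ and the implicit restriction to $[0,\infty)$. The precise formula underlying this correspondence is equation (4.78) of \cite{konig2002non} (with the fixed-time version appearing in \cite{johansson2002non}), which can be invoked to conclude the argument.
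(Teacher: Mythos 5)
Your proposal is correct and matches the paper's approach: the paper offers no independent proof of this lemma, just a citation to equation (4.78) of \cite{konig2002non}, and your sketch reconstructs exactly the shear/encoding argument that underlies that reference before invoking the same citation. As a minor aside, your careful deduction that the transformed walk satisfies $X_{n,i}(1-i)=0$ (so that nonintersection forces the deterministic up-steps on $[1-i,0]$ and hence $X_{n,i}(0)=i-1$) is the correct reading of the shear; the displayed claim ``$X_{n,k}(0)=0$'' in the paper's preamble to the lemma appears to be a typo.
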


Using Lemma \ref{L:geom-bern}, we can translate Theorem \ref{T:main-walk} to get Theorem \ref{T:geom-walk}.

\begin{proof}[Proof of Theorem \ref{T:geom-walk}]
	
	In the proof we will use convergence of graphs of functions. To facilitate this, consider the following ``local Hausdorff'' topology $\mathcal T$ of closed subsets of $\mathbb R^2$. A sequence $D_n$ converges to $D$ in  $\mathcal T$ if $D_n\cap [-n,n]\times \mathbb R$ converges in the Hausdorff topology to
	$D\cap [-n,n]\times \mathbb R$ for every  $n\in \mathbb N$.
	
	Now consider functions $f,f_n:\mathbb R\to \mathbb R$ with $f$ continuous. Then $f_n\to f$ uniformly on compacts if and only if the graph of $f_n$ converges to the graph of $f$ in $\mathcal T$. This equivalence also holds for zigzag graphs. In other words $f\mapsto \operatorname{graph} f$ and $f\mapsto \operatorname{graph}^z f$ are functionals which are continuous at $f$ that are continuous.
	
	We now consider how the scaling in Theorems \ref{T:geom-walk}
	and \ref{T:main-walk} acts on the level of graphs. It acts by tranformations from the affine group of the form
	$
	y\mapsto Ax+b
	$
	where $A$ is a $2\times 2$ invertible matrix and $b\in \mathbb R^2$. This group can be represented with $3\times 3$ matrices in the block form as
	$$
	\left(\begin{array}{cc}
	A & b \\
	0 & 1
	\end{array}\right)\left(\begin{array}{c}
	x \\
	1
	\end{array}\right)=
	\left(\begin{array}{c}
	y \\
	1
	\end{array}\right).
	$$
	With this notation we turn to the scaling matrices. Let
	$$
	M_n=
	\left(\begin{array}{ccc}
	\tau_n& 0 & m_n \\
	g'\tau_n &  \chi_n & g\\
	0 & 0 & 1
	\end{array}\right)
	,\quad L_n=
	\left(\begin{array}{ccc}
	\bar\tau_n& 0 & \bar m_n \\
	\bar \ga' \bar \tau_n &  -\bar\chi_n & \bar \gamma\\
	0 & 0 & 1
	\end{array}\right),
	\quad
	B=
	\left(\begin{array}{ccc}
	1& 1 & 0 \\
	1  &  0 & 0\\
	0 & 0 & 1
	\end{array}\right)
	$$
	be the matrices associated to the scaling in Theorem \ref{T:geom-walk} and Theorem \ref{T:main-walk} (the latter distingushed by bars), as well as the transformation taking geometric to Bernoulli walks. Here $g=g_{n, \beta_n}(m_n)$ and $\bar\gamma=\bar\gamma_{n,\beta_n}(\bar m_n)$.
	
	Now assume that condition (i) of Theorem \ref{T:geom-walk} holds. With
	\begin{equation}\label{E:barmn}
	\bar m_n=m_n+g(m_n),
	\end{equation}
	it is straightforward to check that condition (i) of Theorem  \ref{T:main-walk} also holds. The two arctic curves are related by \eqref{E:barmn} and the equality $\bar \gamma(\bar m_n)=m_n$.
	
	By Lemma \ref{L:geom-bern}, $L_n^{-1}B$\,graph$^z(P_{n,k})$ are  graphs of the rescaled nonintersecting Bernoulli random walks.
	By Theorem \ref{T:main-walk} and the continuity of $f\mapsto \operatorname{graph} f$, these converge in law with respect to $\mathcal T$ jointly over $k\in \mathbb N$ to the graphs graph$(\scrA_k)$ of the parabolic Airy line ensemble.
	
	It is straightforward to check that the matrix
	$$
	M_n^{-1} B^{-1} L_n =
	\left(\begin{array}{ccc}
	1 & -\bar{\chi}_n \lf(\bar{\tau}_n \ga'\rg)^{-1}&0\\
	0 & 1&0 \\
	0&0&1
	\end{array}\right)
	$$
	converges to the identity matrix by \eqref{E:bernoulli-scaling} and the fact that $\bar \chi_n \to \infty$. Thus $M_n^{-1}$\,graph$^z(P_{n,k})$ also converges in $\mathcal T$ to graph$(\scrA_k)$ jointly in law over $k\in \mathbb N$. Now, $M_n^{-1}$\,graph$^z(P_{n,k})$ are just the zigzag graphs of the rescaled nonintersecting geometric walks, so the continuity of $\operatorname {graph^z} f\mapsto f$ implies (ii).
	
	For the other direction, if the rescaled geometric walks  converge to the parabolic Airy line ensemble, then the distribution of rescaled last passage values to the point $(m_n, n)$ must converge to a Tracy-Widom random variable by  Theorem \ref{T:geo_to_L}. This requires that the side lengths $m_n,n$ of the relevant box that the expected total sum over this box must approach infinity, yielding \eqref{E:geom-conds}.
\end{proof}

\section{Last passage percolation in other environments}\label{S:Corollaries}

\FloatBarrier

In this section, we consider last passage percolation in other settings, most of which are obtained from suitable limits of the geometric one defined in Section \ref{S:LPP-geometric}. By coupling, we can extend the uniform convergence to the Airy line ensemble to these models. We also consider the Sepp\"al\"ainen-Johansson model, a last passage model which is directly related to Bernoulli walks.

\begin{figure}
	\begin{center}
		\begin{tikzpicture}
		\node [block] (Geo) { \begin{tabular}{rl}
			LPP: & geometric \\
			NI: & geometric \\
			\end{tabular}};
		\node [block, below=1cm of Geo]  (Ber) {\begin{tabular}{rl}
			LPP: & Sepp.-Joh. \\
			NI: & Bernoulli \\
			\end{tabular}};
		\node [block, right=1cm of Geo]  (Exp) {\begin{tabular}{rl}
			LPP: & exponential \\
			NI: & exponential \\
			\end{tabular}};
		\node [block, right=1cm of Ber]  (PLines) {\begin{tabular}{rl}
			LPP: & Poisson lines \\
			NI: & Poisson  \\
			\end{tabular}};
		\node [block, right=1cm of PLines]  (PBox) {\begin{tabular}{rl}
			LPP: & Poisson in plane \\
			NI: & Poisson ($\infty$ lines) \\
			\end{tabular}};
		\node [block, right=1cm of Exp]  (Brown) {\begin{tabular}{rl}
			LPP: & Brownian \\
			NI: & Brownian \\
			\end{tabular}};
		\node [block, right=1cm of Brown]  (Airy) {\begin{tabular}{rl}
			LPP: & directed landscape \\
			NI: & Airy line ensemble \\
			\end{tabular}};
		\path[line, <->, dashed] (Geo) -- (Ber);
		\path[line, ->] (Ber) -- (PLines);
		\path[line, ->, double] (Geo) -- (Exp);
		\path[line, ->] (Geo) -- (PLines);
		\path[line, ->] (PLines) -- (PBox);
		\path[line, ->] (Exp) -- (Brown);
		\path[line, ->, double] (PLines) -- (Brown);
		\path[line, ->, double] (PBox) -- (Airy);
		\path[line, ->] (Brown) -- (Airy);
		\end{tikzpicture}
	\end{center}
	\caption{
		Each model is both a type of last passage percolation and a nonintersecting line ensemble. For last passage percolation with geometric variables there are three discrete quantities: the two lattice coordinates and the last passage value. In the directed landscape, all three are continuous.
		Single arrows indicate degeneration of the lattice coordinates and double arrows indicate degeneration of the values from discrete to continuous. }
\end{figure}

\subsection*{Exponential environment}

\begin{definition}
	\label{def:LPP_exp} The \textbf{exponential random environment} is a last passage percolation model with discrete lattice coordinates and $\mathbb{R}$-valued passage times defined as follows. Let $W :\mathbb{N}^2\to \mathbb{R}$ be defined so that $W_{i,j}$ are independent exponential random variables of mean $1$. For each $n,k,m \in \N$ with $k \le n \wedge m$ define the \textbf{exponential valued LPP time} $L_{n,k}(m)$ to be the passage time with $k$ disjoint paths from the bottom left corner to the top right corner of the box $[1,m] \times [1,n]$ as defined in \eqref{D:LPP-k}.
\end{definition}

\begin{corollary}\label{C:exponential}
	Let $L_{n,k}(m)$ be the last passage time in an exponential environment as defined in Definition \ref{def:LPP_exp}. For any $m,n$ define the arctic curve:
	$$ g_n(m) = n+m+2\sqrt{nm}, $$
	which is the deterministic approximation of the last passage value $L_{n,1}(m)$ in this model.
	
	Let $m_n\to \infty$ be a sequence of natural numbers. Denoting by $g, g', g''$ the value of $g_n$ and its derivatives evaluated at $m_n$, we set the space and time scaling of the model:
	\begin{align}\nonumber
	\label{E:tau-xi-exponential}
	\tau_n^3 &= \frac{2  g '^{2}}{g''^2} = \frac{8m_n^2 (\sqrt{m_n} + \sqrt{n})^2}{n} \qquad &\chi_n^3 = \frac{ g'^{4}}{ -2g''} = \frac{(\sqrt{m_n} + \sqrt{n})^4}{\sqrt{m_nn}}.
	\end{align}
	
	Define the linear approximation to the arctic curve around $m_n$:
	$$
	h_n(m) = g + (m - m_n)g'.
	$$
	Then we have the following convergence in law in the uniform-on-compact topology of functions from $\mathbb N\times \mathbb R \to \mathbb R$:
	\begin{equation}
	\label{E:Lnk-cvg}
\frac{(L_{n,k}-L_{n,k-1}-h_n)(m_n+\lfloor \tau_{n}t\rfloor)}{\chi_n}\Rightarrow \scrA_k(t).
	\end{equation}
\end{corollary}


\begin{proof} The proof goes by coupling the exponential random variables to geometric random variables of very large mean. For each $n$, define an environment $W^n:\N^2 \to \R$ by
	$$
	W^n_{i, j} = \floor{n m_nW_{i, j}}.
	$$
	Each $W^n$ is an environment of i.i.d.\ geometric random variables with odds 
	$$
	\be_n = e^{1/(n m_n)}(1- e^{-1/(n m_n)}) = \frac{1 + o(1)}{n m_n}.
	$$
	Let $\tilde L_{n, k}$ denote last passage percolation in the environment $W^n$. Then
	\begin{itemize}
		\item $|n^{-1} m_n^{-1} \tilde L_{n, k}(t) - L_{n, k}(t)| \le 3k$ for any $t \in \{1, \dots, 2n\}, k \in \{1, \dots, n\}$.
		\item We have the following uniform-on-compact convergence in law:
		\begin{equation}
		\label{E:frac-tildeLk}
		\frac{(\tilde L_{n,k}-\tilde L_{n,k-1}-\tilde h_n)(m_n+\lfloor \tilde \tau_{n}t\rfloor)}{\tilde \chi_n}\Rightarrow \scrA_k(t),
		\end{equation}
		where $\tilde h_n, \tilde \tau_n, \tilde \chi_n$ are as in Theorem \ref{T:main-lpp} for the sequence of geometric last passage models defined from $W^n$.
	\end{itemize}
	Here the first claim follows from the fact that in this coupling, $|n^{-1} m_n^{-1} W^n_{i, j} - W_{i, j}| \le n^{-1} m_n^{-1}$ for any $i, j$, and the second claim uses Theorem \ref{T:main-lpp}. Since $n, m_n \to \infty$, the sequence $\be_n \to 0$, and so condition (i) of that theorem holds.
	
	 Now, a calculation shows that the difference of the left sides of \eqref{E:Lnk-cvg} and \eqref{E:frac-tildeLk} converges to $0$ with $n$, which yields the result. This is best seen by explicitly calculating and simplifying the difference (i.e. with computer algebra) and then using the fact that $\be_n = (n m_n)^{-1} (1 + o(1))$ and the first bullet above to recognize that this difference tends to $0$ with $n$ (we provide more details for a very similar computation in the proof of the next corollary).
%
%
%
\end{proof}

\subsection*{Last passage percolation in continuous time}

The following definition is used for last passage percolation in the Poisson lines and Brownian environments.

\begin{definition} \label{D:LPP-cont}
	Let $F_1, F_2, \ldots $ be a collection of cadlag functions from $\R$ to $\R$. Given $n'\le n \in \N$ and $t'\le t \in \R$, a path $\pi$ from $(t',n')$ to $(t,n)$ is a sequence $t' = \pi_{n'-1} \le \pi_{n'} \le \ldots \le \pi_n  = t$. Such paths $\pi$ are naturally interpreted as nondecreasing functions $\pi: [t',t] \to \{n',n'+1,\ldots, n\}$. Define the weight of $\pi$ in $F$ as
	\begin{equation}
	|\pi|_F =  \sum_{i=n'}^n F_{i}(\pi_{i}) - F_i(\pi_{i-1}^-)
	\end{equation}
	where  $F_i(\pi_{i-1}^-)$ denotes the left limit of $F_i$ at $\pi_{i-1}$.
	
	Define the passage value in $F$ by:
	$$L_{n, 1}(t) = \sup_{\pi} |\pi|_F
	$$ where the superemum is over all paths $\pi$ from $(0,1)$ to $(t,n)$.
	Similarly, we define:
	\begin{equation}\label{E:lp-cont}
	L_{n, k}(t) = \sup_{\pi_{1}, \ldots, \pi_{k}} |\pi_1|_F+\ldots +|\pi_k|_F
	\end{equation}
	where the supremum is now over $k$-tuples of disjoint paths $\pi_p$ from $(0,p)$ to $(t,n-k+p)$. Here, disjointness is defined as strict monotonicity between paths as functions of time.
\end{definition}

\subsection*{Poisson lines environment}

\begin{definition}\label{D:Poisson-lines}
	The \textbf{Poisson lines environment} is a last passage percolation model with semi-discrete lattice coordinates and $\mathbb{N}$-valued passage times defined as follows. This is a special case of the passage values $L_{n,k}(t)$ from Definition \ref{D:LPP-cont} where $F_1, F_2, \ldots$ is a collection of $n$ independent Poisson processes, i.e. the increment $F_i(t) - F_i(s)$ are Poisson random variables of mean $t-s$, and non-overlapping increments are independent.
	
\end{definition}

This model, though not frequently studied, goes back to the late 1990s, e.g. see Section 3 of \cite{seppalainen1996hydrodynamic}. It is related to the discrete-space Hammersley process in the same way that Poisson last passage percolation on the plane is related to the continuous-space Hammersley process, see \cite{ferrari2005multiclass}.

\begin{corollary} \label{C:Poisson-lines} Consider the last passage value in the Poisson lines environment as defined in Definition \ref{D:Poisson-lines}.  Let $t_n$ be a positive sequence; we analyze last passage values to points near $(n, t_n)$ across the sequence $F_i$.  Define the Poisson lines arctic curve
	$$
	g_{n}(t) = t+ 2 \sqrt{tn},
	$$
	the deterministic approximation of the last passage value
	$L_{n, 1}(\cdot)$.
	We now define the temporal and spatial scaling parameters $\tau_n$ and $\chi_n$ in terms of the arctic curve $g=g_{n}$ and its derivatives $g',g''$ taken in the variable $t$ at the value $t_n$:
	\begin{equation*}
	\tau_n^3 = \frac{2 g '}{g''^2}=8t^3(1/n+1/\sqrt{nt}), \qquad \chi_n^3 = \frac{ g'^{2}}{ -2g''}=\sqrt{\frac{t}{n}}(\sqrt t+ \sqrt n)^2.
	\end{equation*}
	Also, let $h_n$ be the linear approximation of the arctic curve $g$ at $t_n$:
	$$
	h_n(t) = g + (t - t_n)g'
	$$
	Then the following statements are equivalent:
	\begin{enumerate}[label=(\roman*)]
		\item The number of lines and the mean number of accessible Poisson points converge to $\infty$:
		\begin{equation}\nonumber
		n\to\infty, \qquad nt_n\to\infty.
		\end{equation}
		\item The rescaled differences of the $k$-path and $(k-1)$-path last passage values converge in distribution, uniformly over compact subsets of $\mathbb{N}\times \mathbb{R}$, to the parabolic Airy line ensemble $\scrA$:
		\begin{equation}
		\label{E:LnkLnk}
		\frac{(L_{n,k}-L_{n,k-1}-h_n)(t_n+ \tau_{n}t)}{\chi_n}\Rightarrow \scrA_k(t).
		\end{equation}
	\end{enumerate}
\end{corollary}

\begin{proof} We first show that (i) implies (ii). We convert the Poisson processes into weights on a lattice by counting points in small intervals. Symbolically, given a partition of $\mathbb{R^+}$, $0<s_1<s_2 < \ldots $, we can define $P_{i,j} = F_j(s_{i})-F_j(s_{i-1})$. Because the $F_j$ are independent Poisson processes, each $P_{i,j}$ is an independent Poisson random variable and we can consider last passage values in the grid $P_{i,j}$. As long as the intervals are small enough so that there is at most one Poisson point per column (i.e. For all $i$ in a large range, we have $\sum_{j=1}^n P_{i,j} \leq 1$), the lattice and the Poisson lines last passage values are exactly equal by definition. It will be more convenient to consider last passage values in this discrete grid because this will allow us to use the results we've already developed for discrete lattices.
	
	Given a sequence $t_n$, we will pick $\beta_n$ large enough so that $\be_n t_n \in \N$ for all $n$ and
	\begin{equation}
	\label{E:n2tnn}
	\be_n/\max(n^{100}, t_n^{100}, t_n^{-100}) \to \infty \mathas n \to \infty.
	\end{equation}
	We could get away with something weaker here but it will be easier to deal with computation errors when $\be_n$ is very large. For $j \in \{1, \dots, n\}$ and $i \in \{1, \dots, 2m_n\}$, where $m_n := \beta_nt_n$, define $P_{i,j} = F_i(i/\be_n) - F_i((i-1)/\be_n)$. With this definition, the random variables $P_{i, j}$ form an i.i.d. array of Poisson random variables of mean $1/\beta_n$.
	
	Note that the total variation distance between Poisson and geometric random variables with mean $1/\beta_n$ is at most $10/\beta_n^2$. We can replace each Poisson increment $P_{i,j}$ in our array by a geometric random variable $W_{i,j}$ with the same mean $1/\beta_n$ for a price of $10nm_n/\beta_n^2 = 10 nt_n/\beta_n$ in total variation distance. Therefore in a optimal coupling we can ensure that $\p A^c_n \le 10nt_n/\beta_n$, where
	$$
	A_n = \left\{ W_{i,j} = P_{i,j} \text{ for all } 1\leq i \leq 2m_n, 1\leq j \leq n \right\}.
	$$
	
	Let $B$ be the event that there is a vertical line with index $i\in\{1,\ldots, 2m_n\}$ with total sum $S_i=P_{i,1}+\ldots + P_{i,n}$ more than one, symbolically 
	$$
	B_n= \left\{ \exists i\in\{1,\ldots, 2m_n\} \text{ so that } S_i > 1 \right\}.
	$$
	Since the $S_i$ are  Poisson random variables of mean $n/\be_n$, by a union bound we have that
	$$
	\p B_n \le \sum_{i=1}^{2m_n} \p(S_i > 1) = 2m_n (1 - e^{-n/\be_n}(1 + n/\be_n)) \le 2 \be_n t_n (n/\be_n)^2.
	$$
	Therefore $\p(A_n \smin B_n) \ge 1 - \p A_n^c + \p B_n \ge 1 -2n(5 + n)t_n/\be_n$. This converges to $1$ with $n$ by \eqref{E:n2tnn}.
	
	On the event $A_n\setminus B_n$ the last passage value $L_{n, k}(s)$ coming from the Poisson lines equals the geometric last passage value $\tilde L_{n, k}(\floor{\be_n s})$ in the geometric environment whenever $s \le 2 t_n$.
	Now, the condition \eqref{E:n2tnn} guarantees that the sequence of geometric environments we have defined satisfy the conditions of Theorem \ref{T:main-lpp}. Letting $\tilde \tau_n, \tilde \chi_n, \tilde h_n$ be the corresponding scaling parameters in that theorem, this and the fact that $\p(A_n \smin B_n) \to 1$ with $n$ implies that
\begin{equation}
\label{E:LnkLnkLnk}
	\frac{(L_{n,k}-L_{n,k-1})(t_n+ \tilde \tau_{n}t/\be_n) -\tilde h_n(m_n+ \floor{\tilde \tau_{n}t})}{\tilde \chi_n}\Rightarrow \scrA_k(t),
	\end{equation}
	in the uniform-on-compact topology on $\N \X \R$. 	As in the proof of Corollary \ref{C:exponential}, by comparing the scaling parameters $h_n, \tau_n, \chi_n$ to $\tilde \tau_n, \tilde \chi_n, \tilde h_n$ we can check that this implies \eqref{E:LnkLnk}. Indeed, a computer-aided computation and subsequent simplification shows that
	\begin{equation}
	\label{E:four-big-cpus}
	\begin{split}
	\tilde \tau_n/\be_n &= \tau_n + O((1 + t_n)/\be_n), \\
	\tilde \chi_n &= \chi_n + O((n + t_n)/(\be_n n^{1/4} t_n^{1/4})),\\
	\tilde g_n(m_n) &= g_n(t_n) + O((n +  \sqrt{nt_n})/\be_n),\\
	\be_n \tilde g_n'(m_n) &= g_n'(t_n) + O(\sqrt{n}/(\be_n \sqrt{t_n})).
	\end{split}
	\end{equation}
	Note that all errors on the right hand sides above are very small compared to the main terms by the definitions of $g, \tau_n, \chi_n$ and the condition \eqref{E:n2tnn}. In particular, the error in the relationship between $\tau_n$ and $\tilde \tau_n/\be_n$ implies that $\tau_n \be_n/\tilde \tau_n \to 1$ by \eqref{E:n2tnn} above and the definition of $\tau_n$. Therefore the uniform-on-compact convergence in \eqref{E:LnkLnkLnk} also holds with $\tau_n \be_n$ in place of $\tilde \tau_n$. We can then conclude the convergence in \eqref{E:LnkLnk} by using \eqref{E:four-big-cpus} and \eqref{E:n2tnn} to recognize that the following two differences converge to $0$ with $n$, uniformly over compact sets in $t$:
	\begin{align*}
	\frac{\tilde h_n(m_n+ \floor{\tau_{n} \be_n t})}{\tilde \chi_n} - \frac{h_n(t_n+ \tau_n t)}{\chi_n} &= \frac{\tilde g_n(m_n)}{\tilde \chi_n} - \frac{ g_n(t_n)}{\chi_n} + \frac{\tilde g_n'(m_n) \floor{\tau_{n} \be_n t}}{\tilde \chi_n} - \frac{ g_n'(t_n) \tau_{n} t}{\tilde \chi_n} \\
	\frac{(L_{n, k} - L_{n, k-1})(t_n + \tau_n t)}{\tilde \chi_n}& - \frac{(L_{n, k} - L_{n, k-1})(t_n + \tau_n t)}{\chi_n}.
	\end{align*}
	To see why the second difference converges to $0$, note that we crudely bound the magnitude of $L_{n, k}$ by a trivial by the number of Poisson points in the grid, which is $O(n t_n)$.
	
	To show that (ii) implies (i), first observe that we must have the number of lines tending to infinity in order to define arbitrarily many disjoint paths. Similarly, if the expected number of points does not tend to infinity, then random variables with continuous distributions cannot appear in the limit.
\end{proof}

\subsection*{Brownian last passage percolation}

\begin{definition}\label{D:Brownian}
	The \textbf{Brownian environment} is a last passage percolation model with semi-discrete lattice coordinates and $\mathbb{N}$-valued passage times defined as follows. This is a special case of the passage values $L_{n,k}(t)$ from Definition \ref{D:LPP-cont}, where $F_1 = B_1, F_2 = B_2, \ldots$ is a collection of $n$ independent standard Brownian motions.
\end{definition}
\begin{corollary}\label{C:Brownian}
Define the Brownian arctic curve
	$$
	g_{n}(t) = 2 \sqrt{tn},
	$$
	which is the deterministic approximation of the Brownian last passage value $L_{n, 1}(\cdot)$.
	We now define the temporal and spatial scaling parameters $\tau_n$ and $\chi_n$ in terms of the arctic curve $g=g_{n}$ and its derivatives $g',g''$ taken in the variable $t$ at the value $1$:
	\begin{equation*}
	\tau_n^3 = \frac{2}{g''^2}=\frac{8}{n}, \qquad \chi_n^3 = \frac{1}{ -2g''}=\frac{1}{\sqrt{n}}.
	\end{equation*}
	Also, let $h_n$ be the linear approximation of the arctic curve $g$ at $1$:
	$$
	h_n(t) = g + (t - 1)g'
	$$
	Then the rescaled differences of the $k$-path and $(k-1)$-path last passage values converge in distribution, uniformly over compacts of $\mathbb{N}\times \mathbb{R}$, to the parabolic Airy line ensemble $\scrA$:
	$$\frac{(L_{n,k}-L_{n,k-1}-h_n)(1+ \tau_{n}t)}{\chi_n}\Rightarrow \scrA_k(t).
	$$
\end{corollary}

This uniform convergence result is due \cite{CH}.  In our setting, it follows by coupling Brownian motions to geometric walks in sufficiently long thin boxes. The proof is analogous to the other cases, so we omit it.

\subsection*{Poisson last passage percolation in the plane}

\begin{definition}
	\label{D:Poisson-plane}
Consider a discrete subset of $\Lambda\subset \mathbb R\times [0,1]$ with distinct first and second coordinates. Last passage from $(0,0)$ to $(t,1)$  can be defined by putting a fine enough grid on the box $[0,0]\times[t,1]$ and defining $W_{ij}$ as the number of elements of $\Lambda$ in the box $(i,j)$. It is easy to check that last passage values across the variables $W_{ij}$ stabilize as the mesh of the grid converges to $0$, giving $L_k(t)$. When $\Lambda$ is a Poisson point process, this model is called \textbf{Poisson last passage percolation} in the plane.
\end{definition}

 This corollary covers all planar Poisson convergence results up to a simple affine transformation.

\begin{corollary}[Poisson last passage in the plane]\label{C:Poisson-box}
	Let $s\to \infty$.
	To match with the main theorem, we define the arctic curve
	$$
	g(s) = 2\sqrt{s}
	$$
	which is the deterministic approximation of the Poisson last passage value
	$L_{1}(s)$.
	We now define the temporal and spatial scaling parameters $\tau_s$ and $\chi_s$ in terms of the arctic curve $g$ and its derivatives $g',g''$ taken in the variable $s$:
	\begin{equation*}
	\tau_s^3 = \frac{2g'}{g''^2}=8s^{5/2}, \qquad \chi_s^3 = \frac{g'^2}{ -2g''}=\sqrt{s}.
	\end{equation*}
	Also, let $h_s$ be the linear approximation of the arctic curve $g$ at $s$:
	$$
	h_s(t) = g(s) + (t - s)g'(s)
	$$
	Then the rescaled differences of the $k$-path and $(k-1)$-path last passage values converge in distribution, uniformly over compacts of $\mathbb{N}\times \mathbb{R}$, to the parabolic Airy line ensemble $\scrA$:
	$$\frac{(L_{k}-L_{k-1}-h_s)(s+ \tau_{s}t)}{\chi_s}\Rightarrow \scrA_k(t).
	$$
\end{corollary}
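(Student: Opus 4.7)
The plan is to deduce Corollary \ref{C:Poisson-box} from Corollary \ref{C:Poisson-lines} by representing the planar Poisson point process as an ensemble of Poisson lines on a finer and finer family of horizontal strips, taking the number of strips $n = n_s \to \infty$ rapidly as $s \to \infty$.

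Concretely, given a rate-$1$ Poisson point process $\Lambda \subset \mathbb{R} \times [0,1]$, I will partition $[0,1]$ into $n$ strips of height $1/n$ and let $F^n_i(t)$ count the points of $\Lambda$ with $x$-coordinate in $[0,t]$ lying in the $i$-th strip. These are $n$ independent rate-$(1/n)$ Poisson processes; the time-rescaled processes $G^n_i(t) := F^n_i(nt)$ are $n$ independent rate-$1$ Poisson processes, to which Corollary \ref{C:Poisson-lines} applies with $t_n := s/n$, since the hypotheses $n \to \infty$ and $n t_n = s \to \infty$ both hold. The key observation linking the two last passage models is that on the event $E_n$ that no strip contains two or more points of $\Lambda$ with $x$-coordinate in $[0, 2s]$, the Poisson lines last passage values $L^{\mathrm{PL}}_{n,k}$ computed from $(G^n_i)$ and the planar values $L_k$ satisfy $L^{\mathrm{PL}}_{n,k}(t) = L_k(nt)$ throughout the relevant range: within a strip containing at most one point, the planar $y$-ordering constraint is vacuous, so strip-structured paths in the Poisson lines model are valid planar paths and vice versa. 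A union bound gives $\mathbb{P}(E_n^c) \le 2 s^2 / n$, so any choice such as $n_s = \lceil s^{10} \rceil$ forces $\mathbb{P}(E_n) \to 1$ with ample room to spare.

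It then remains to verify that under the time change $u = n t$ the Poisson lines scaling parameters at $t_n = s/n$ match the planar ones. A direct computation with $g_n(t) = t + 2\sqrt{tn}$ and $g(s) = 2\sqrt{s}$ gives $\chi_{n_s} \to \chi_s$, $n_s \tau_{n_s} \to \tau_s$, and the tangent identity $h_{n_s}(u/n_s) = h_s(u) + u/n_s$, in which the extra $u/n_s$ term contributes $O(s^{5/6}/n_s) = o(1)$ after dividing by $\chi_s$. Substituting $t^* := \tau_s r /(n_s \tau_{n_s}) \to r$ into the Corollary \ref{C:Poisson-lines} convergence (which is uniform on compacts in its time parameter) and composing with the identity $L^{\mathrm{PL}}_{n,k}(u/n) = L_k(u)$ on $E_n$ then delivers the desired $\scrA_k(r)$ limit for $u = s + \tau_s r$.

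I expect the main obstacle to be scaling bookkeeping rather than any deep probabilistic input: one must carefully compose the time change $u = n t$ with the Poisson lines parameters $\chi_n, \tau_n, h_n$, and absorb the finite-$n$ mismatch $n_s \tau_{n_s} \neq \tau_s$ using the uniform-on-compact nature of the convergence in Corollary \ref{C:Poisson-lines}. The coupling itself and the probability estimate on $E_n$ are routine.
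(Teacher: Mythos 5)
Your proposal is correct, but it takes a genuinely different route from the paper's. The paper discretizes \emph{both} coordinates of the planar Poisson process, placing an $n\times 2n$ grid on the rectangle, coupling the resulting Poisson box counts to geometric random variables of the same mean, and then invoking Theorem~\ref{T:main-lpp} directly; the relevant high-probability event is that all couplings succeed and every grid row and column contains at most one point. You instead discretize only the vertical coordinate, producing an $n$-line Poisson lines ensemble, and reduce to the already-established Corollary~\ref{C:Poisson-lines}; your high-probability event is that no strip contains two points, and the remaining work is the (correct) bookkeeping verifying $\chi_{n_s}/\chi_s\to1$, $n_s\tau_{n_s}/\tau_s\to1$, and absorbing the $u/n_s$ discrepancy between $h_{n_s}(u/n_s)$ and $h_s(u)$ after dividing by $\chi_s$. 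Your route is more modular since it factors through the Poisson lines corollary rather than returning to the geometric base theorem, though this costs you the explicit time-change and parameter-matching computation; the paper's route is shorter because it reuses the total-variation coupling between Poisson and geometric variables that already appears in the proof of Corollary~\ref{C:Poisson-lines}. Both approaches require essentially the same probability estimate ($n_s\gg s^2$ to make the bad event negligible), and both leave the straightforward verification that the strip- or grid-structured paths agree with the planar paths on the good event to the reader.
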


The convergence for the finite dimensional distributions of the top line was shown in  \cite{borodin2006stochastic}, see also \cite{prahofer2002scale} for last passage values along a diagonal line, the polynuclear growth model.

\begin{proof}
	Pick $n=n_s$ so that $s^2/n_s\to 0$ as $s\to\infty$. Consider an $n \times 2n$ grid with vertical spacing $1/n$ and horizontal spacing $s/n$.
	We define the random variables $P_{i,j}$, $1\le i\le n$, $1\le j \le 2n$ by counting points in the corresponding grid boxes. More precisely, for each $i, j$ set $P_{i, j} = \Lambda[s(j-1)/n, sj/n] \X [(i-1)/n, i/n]$, where $\Lambda$ is the original Poisson process.
	
	We couple each Poisson random variable $P_{i,j}$ to a geometric $W_{i,j}$ with the same mean $s/n^2$. As in the proof of Corollary \ref{C:Poisson-lines}, in an optimal coupling  they are all equal with probability at least $1-20n^2(s/n^2)^2$. Finally, we must ensure that each column and row has total sum at most one. Since the number of points in each row and column is Poisson of mean $2s/n$ and $s/n$ respectively, as in the proof of Corollary \ref{C:Poisson-lines}, the probability that there is at most one entry in each row and column is at most $1 - 3n(2s/n)^2$. As in the proof of Corollary \ref{C:Poisson-lines}, on these high probability events Poisson last passage values in the plane equal last passage values on the grid. The claim now follows from Theorem \ref{T:main-lpp}, with details that are analogous to those in the proof of Corollary \ref{C:Poisson-lines}.
\end{proof}

\subsection*{The Sepp\"al\"ainen-Johansson model}

Nonintersecting Bernoulli random walks of Theorem \ref{T:main-walk} are directly related to a different last passage percolation model.
\begin{definition}
	\label{D:SJ-model}
Consider a semi-infinite array $W:\N^2\to \{0,1\}$ where each $W_{i, j}$ is an independent Bernoulli random variable with mean $\be/(1+\be)$, or equivalently, odds $\beta$. Define the last passage value
$$
L_{n, 1}(m) = \sup_\pi \sum_{(i, j) \in \pi} W_{i, j}.
$$
Here the supremum is taken over all paths $\pi = (i, \pi_i)_{i \in \{1, \dots, m\}}$ in the box $\{1, \dots, m\} \X \{1, \dots, n\}$ where $\pi_i$ is a nondecreasing sequence. These are no longer up-right lattice paths, but rather they are forced to have \emph{exactly} one coordinate in each column. This is called the \textbf{Sepp\"al\"ainen-Johansson model}.
This model was defined and the arctic curve was obtained in \cite{seppalainen1998exact}. The fluctuations of $L_{n, 1}(m)$ were analyzed in \cite{johansson2001discrete}. As with usual lattice last passage percolation, we can also define
\begin{equation}\label{E:SJL}
L_{n, k}(m) = \sup_{\pi^1, \dots, \pi^k} \sum_{\ell = 1}^k \sum_{(i, j) \in \pi_\ell} W_{i, j},
\end{equation}
where the paths $\pi^j$ are strictly ordered ($\pi^j_i < \pi^{j-1}_i$ for all $i, j$) and still have exactly one coordinate in each column.
\end{definition}

  For fixed $n$, the functions
$$
L_{n, k}(m) - L_{n, k-1}(m) + n - k
$$
have the law of $n$ nonintersecting Bernoulli walks. This is essentially proven in \cite{o2003conditioned}, Section 4.5. More precisely, combining the results of that section with the results of \cite{konig2002non} shows that the dual RSK algorithm applied to a matrix of independent Bernoulli random variables gives nonintersecting Bernoulli walks. The fact that dual RSK gives differences of last passage values follows from an analogue of Greene's theorem in that context, see \cite{krattenthaler2006growth}.

Our Theorem \ref{T:main-walk} applied to the top walk (rather than the bottom walk) immediately yields the following convergence.

\begin{corollary}
	\label{C:seppalainen}
	Consider sequences of parameters $\beta_n \in (0, \infty)$, $m_n \in \N$ with $\be_n n < m_n$. Let $L_{n,k}$ be the last passage values \eqref{E:SJL} in the Sepp\"al\"ainen-Johansson model.  Define the Nordic curve
	$$
	g_{n, \beta}(m) = m - \frac{(\sqrt{ m} -\sqrt{n \be})^2}{1 + \beta} \indic(m > n \be),
	$$
	the deterministic approximation of the last passage vaue $L_{n, 1}(m).$
	We define scaling parameters $\chi_n$ and $\tau_n$ in terms of $g =g_{n, \be_n}$ and its derivative $g'$, $g''$ evaluated at the point $m_n$:
	\begin{equation*}
	\tau_n^3 = \frac{2g'(1 - g')}{(g'')^2}, \qquad \chi_n^3 = \frac{[g'(1 - g')]^2}{-2g''}.
	\end{equation*}
	Also, let $h_n$ be the linear approximation of $g$ at $m_n$.
	$$
	h_n(m) = g + (m - m_n)g'
	$$
	Then the following are equivalent:
	\begin{enumerate}[label=(\roman*)]
		\item $\chi_n \to \infty$ with $n$.
		\item The rescaled differences between the $k$-path and $(k-1)$-path last passage values converge in distribution, uniformly over compact sets of $\mathbb{N}\times \mathbb{R}$, to the parabolic Airy line ensemble $\scrA$:
		$$
		\frac{(L_{n,k}-L_{n,k-1}-h_n)(m_n + \lfloor\tau_n t\rfloor)}{\chi_n}\Rightarrow \scrA_k(t).
		$$
	\end{enumerate}
	
\end{corollary}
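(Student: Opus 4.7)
The plan is to reduce the corollary to Theorem~\ref{T:main-walk} by combining the RSK-type identity mentioned just before the statement with a flip symmetry that converts the top edge of nonintersecting Bernoulli walks with odds $\beta_n$ into the bottom edge of nonintersecting Bernoulli walks with odds $1/\beta_n$. First I would take the stated identity as given: for each $n$, the processes $L_{n,k}-L_{n,k-1}+n-k$, $k=1,\dots,n$, are jointly distributed as an ensemble of $n$ nonintersecting Bernoulli walks with odds $\beta_n$ started from $0,1,\dots,n-1$, but with the indexing reversed (since $L_{n,k}-L_{n,k-1}$ is nonincreasing in $k$). Writing these walks as $X_{n,1}<\dots<X_{n,n}$ in the convention of Theorem~\ref{T:main-walk}, this says $L_{n,k}-L_{n,k-1} = X_{n,n+1-k}+k-n$, so the desired differences track the top walks $X_{n,n}, X_{n,n-1},\dots$.

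Next I would apply the flip $\hat{X}_k(m) := m + (n-1) - X_{n,n+1-k}(m)$. Each $\hat{X}_k$ has $\{0,1\}$-valued increments with up-step probability $1/(1+\beta_n)$, the flip preserves strict monotonicity (after reversing the index), and the starting positions become $\hat{X}_k(0) = k-1$, so $\hat{X}_1 <\dots<\hat{X}_n$ are nonintersecting Bernoulli walks with odds $\tilde{\beta}_n := 1/\beta_n$ in the sense of Section~\ref{S:Bernoulli}. The hypothesis $\beta_n n < m_n$ of the corollary is precisely the condition $m_n\tilde{\beta}_n > n$ required by Theorem~\ref{T:main-walk} applied to $\hat{X}$. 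A one-line algebraic check gives the identity $g_{n,\beta}(m) = m - \gamma_{n,1/\beta}(m)$ between the Nordic curve of the corollary and the arctic curve of $\hat{X}$; writing $\tilde{\gamma} := \gamma_{n,1/\beta_n}$ and letting $\tilde{h}_n$ be its linear approximation at $m_n$, this translates into
\begin{equation*}
(L_{n,k}-L_{n,k-1}-h_n)(m) \;=\; (\tilde{h}_n - \hat{X}_k)(m) + (k-1).
\end{equation*}

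Finally I would check that the scalings agree. From $g_{n,\beta}(m) = m - \tilde{\gamma}(m)$ one reads off $g' = 1-\tilde{\gamma}'$ and $g'' = -\tilde{\gamma}''$, hence $\tilde{\gamma}'(1-\tilde{\gamma}') = g'(1-g')$; plugging these into formula~\eqref{E:tau-xi-bern} for $\hat{X}$ reproduces exactly the $\tau_n,\chi_n$ appearing in the corollary, so condition (i) of the corollary coincides with condition (i) of Theorem~\ref{T:main-walk} applied to $\hat{X}$. Dividing the displayed identity by $\chi_n$, evaluating at $m = m_n + \lfloor\tau_n t\rfloor$, and noting that the constant offset $(k-1)/\chi_n \to 0$, the uniform-on-compact convergence $(\tilde{h}_n - \hat{X}_k)/\chi_n \Rightarrow \scrA_k$ furnished by Theorem~\ref{T:main-walk} transfers directly to (ii); the reverse direction follows by the same argument used at the end of the proof in Section~\ref{S:FDD}, since a bounded $\chi_n$ would force the spatial range of any subsequential limit to be discrete. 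I do not anticipate a substantive obstacle: the RSK--Greene identity and the flip symmetry are both off the shelf, and the only genuine work is the bookkeeping that matches the flip to the stated scaling constants.
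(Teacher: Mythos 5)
Your proposal is correct and takes essentially the same route as the paper: the paper's proof consists of the RSK-type identity giving the nonintersecting Bernoulli walks, followed by the remark that Theorem~\ref{T:main-walk} ``applied to the top walk (rather than the bottom walk) immediately yields the following convergence.'' Your flip $\hat X_k(m) = m + (n-1) - X_{n,n+1-k}(m)$ with odds $1/\beta_n$ is exactly the explicit implementation of that remark, and your algebraic checks (matching $g = m-\tilde\gamma$, the equality $g'(1-g')=\tilde\gamma'(1-\tilde\gamma')$, the scaling constants, and the vanishing $(k-1)/\chi_n$ offset) are the bookkeeping the paper elides.
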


\noindent {\bf Acknowledgments.}  D.D. was supported by an NSERC CGS D scholarship. M.N. was supported by an NSERC postdoctoral fellowship. B.V. was supported by the Canada Research Chair program, the NSERC Discovery Accelerator grant, the MTA Momentum Random Spectra research group, and the ERC consolidator grant 648017 (Abert).

\bibliographystyle{dcu}

\bibliography{UniformToAiryCitations}

\bigskip\bigskip\noindent

\noindent Duncan Dauvergne, Department of Mathematics, University of Toronto, Canada,\\ {\tt duncan.dauvergne@mail.utoronto.ca}

\bigskip

\noindent Mihai Nica, Department of Mathematics, University of Toronto, Canada,\\ {\tt mnica@math.utoronto.ca}

\bigskip

\noindent B\'alint Vir\'ag, Departments of Mathematics and Statistics, University of Toronto, Canada,\\ {\tt balint@math.toronto.edu}

\end{document}